\pgfplotsset{compat=1.7}
\newcommand{\Sp}{\mathcal{S}}
\newcommand{\C}{\mathcal{C}}
\newcommand{\R}{\mathcal{R}}
\newcommand{\G}{\mathcal{G}}
\newcommand{\ZZ}{\mathbb{Z}}
\newcommand{\RR}{\mathbb{R}}
\newcommand{\scal}[2]{\langle#1,#2\rangle} 
\DeclareMathOperator{\spann}{span} 
\DeclareMathOperator*{\argmax}{arg\,max}
\newcommand{\tle}{\precsim_{(x_n)}} 
\newcommand{\tl}{\prec_{(x_n)}} 
\newcommand{\te}{\sim_{(x_n)}} 
\newcommand{\tlwrt}[1]{\prec_{(#1)}} 
\newcommand{\tewrt}[1]{\sim_{(#1)}} 
\newcommand{\ld}{\lambda^D} 
\newcommand{\ls}{\lambda^S} 
\def\S{\mathcal S}
\def\C{\mathcal C}
\def\Re{\mathcal R}
\def\Z{\mathbb Z}
\def\Td1{T^{D,1}_{\{x_n\}}}
\def\Ts1{T^{S,1}_{\{x_n\}}}
\def\lmt{\lim_{n\rightarrow \infty}}
\newcommand{\reply}[1]{{#1}}
\numberwithin{equation}{section}
\newtheorem{theorem}{Theorem}[section]
\newtheorem{corollary}[theorem]{Corollary}
\newtheorem{lemma}[theorem]{Lemma}
\newtheorem{proposition}[theorem]{Proposition}
\theoremstyle{remark}
\newtheorem{remark}{Remark}[section]
\newtheorem{example}{Example}[section]
\theoremstyle{definition}
\newtheorem{definition}{Definition}[section]
\newtheorem{assumption}{Assumption}
\title{Tier structure of strongly endotactic reaction networks\footnote{This document was conceived and prepared while all the authors were working at the Department of Mathematics of the University of Wisconsin-Madison.} 
}
\author{David F.\ Anderson\footnote{Department of Mathematics, University of Wisconsin-Madison, \href{mailto:anderson@math.wisc.edu}{anderson@math.wisc.edu}.  Grant support from NSF-DMS-1318832 and Army Research Office W911NF-14-1-0401.} \and Daniele Cappelletti\footnote{Department of Biosystems Science and Engineering, ETH Z\"{u}rich, \href{mailto:daniele.cappelletti@bsse.ethz.ch}{daniele.cappelletti@bsse.ethz.ch}.} \and Jinsu Kim\footnote{Department of Mathematics, University of California, Irvine, \href{mailto:jinsu.kim@uci.edu}{jinsu.kim@uci.edu}.} \and Tung Nguyen\footnote{Department of Mathematics, University of Wisconsin-Madison, \href{mailto:nguyen34@wisc.edu}{nguyen34@wisc.edu}.}}
\begin{document}
	
	\maketitle
	
	\begin{abstract}
Reaction networks are mainly used to model the time-evolution of molecules of interacting chemical species.  Stochastic models are typically used when the counts of the molecules  are low, whereas deterministic models are often used when the counts are in high  abundance.  The mathematical study of reaction networks has increased dramatically over the last two decades as these models are now routinely used to investigate cellular behavior.   In 2011, the notion of ``tiers'' was introduced to study the long time behavior of deterministically modeled reaction networks
that are weakly reversible and have a single linkage class.  This ``tier'' based argument was analytical in nature.
Later, in 2014, the notion of a strongly endotactic network was introduced in order to generalize the previous results  from weakly reversible networks with a single linkage class to this wider family of networks.  The point of view of this later work was more geometric and algebraic in nature.   The notion of strongly endotactic networks was later used in 2018 to prove a large deviation principle for a class of stochastically modeled reaction networks.

 In the current paper we provide an analytical characterization of strongly endotactic networks in terms of tier structures. By doing so, we not only shed light on the connection between the two points of view, but also make available a new proof technique for the study of strongly endotactic networks.  We show the power of this new technique in two distinct ways.  First, we demonstrate how the main  previous results related to strongly endotactic networks, both for the deterministic and stochastic modeling choices, can  be quickly obtained from our characterization.  Second, we demonstrate how new results can be obtained by proving that a sub-class of strongly endotactic networks, when modeled stochastically, is positive recurrent.  Finally, and similarly to recent independent work by Agazzi and Mattingly, we provide an example which closes a conjecture in the negative by showing that stochastically modeled strongly endotactic networks can be transient (and even explosive).
	\end{abstract}
	
	
\section{Introduction}

Reaction networks are now commonly  used to model the dynamical behavior of cellular processes, including gene regulatory systems, signaling systems, metabolic systems,  viral infections, etc. If the counts of the constituent ``species'' of the system of interest are low, then the dynamics of the counts are typically modeled via a continuous-time Markov chain with state space $\Z^d_{\ge 0}$, where $d$ is the number of species in the system.  On the other hand, if the counts are high, then an autonomous system of (typically non-linear) ordinary differential equations in $\RR^d_{\ge0}$ is used to model the dynamics of the relevant chemical concentrations.  See \cite{AKbook,K:strong,K:stochastic} for the precise connection between these two modeling choices.

The mathematical foundation of deterministically modeled reaction networks can largely be traced back to the series of papers  \cite{F1,H,H-J1}, where Feinberg, Horn, and Jackson introduced the  notion of network deficiency and proved that if the reaction network  (i) is weakly reversibility and (ii) has a deficiency of zero, then the resulting deterministically modeled system endowed with mass-action kinetics is ``complex-balanced,'' which means that (i) every linear invariant manifold in $\RR^d_{>0}$ admits precisely one equilibrium point, and (ii)  each of these equilibria  satisfies a particular network balance and it is a so-called complex-balanced equilibrium.  Importantly, they showed that this result holds regardless of the choice of rate parameters for the model.  
Feinberg, Horn, and Jackson were interested in chemical systems at an industrial scale.  At smaller scales, discrete stochastic models have been utilized and studied.  In particular, the works of Gardiner \cite{gardiner1985stochastic}, Van Kampen \cite{VK81}, Kurtz \cite{K:strong,K:stochastic,Kur71}, and Gillespie \cite{Gill76,Gill77} were all instrumental in increasing our understanding of these models.  

Much of the work cited in the previous paragraph  took place in the 1960s and 1970s.  While there was mathematical work related to reaction networks during the 1980s and 1990s, it was the advent of new technologies in the biological setting, such as fluorescent proteins, that made the study of mathematical models of reaction networks quite popular  over the last two decades.   

Reaction networks can naturally be associated with a finite graph, constituted by the set of all chemical reactions that can take place. For a few examples of such graphs, see Examples  \ref{ex:SE_transient}, \ref{ex:SE_explosive}, and \ref{ex:proper_not_transversal}. Much of the theory on reaction networks deals with connections between such finite graphs, which are relatively easy to study, and the qualitative properties of the associated dynamical system. We note also that it is most useful to provide results that hold for any choice of model parameters, as these parameters are often unknown with any certainty  in the biological setting. Specifically, the mathematical results about reaction networks are often of the following form:  
\vspace{.1in}

\textit{
Consider a reaction network whose associated graph has properties A, B, and C.  Then, for any choice of parameters for the model, the relevant dynamical system will have property D.}

\vspace{.1in}
  \noindent For example, in the works of Horn, Jackson, and Feinberg cited above, weak reversibility and a deficiency of zero  are both structural properties of the graph, and they imply qualitative dynamical properties of the models such as non-chaotic behavior of the trajectories and the absence of limit cycles, regardless of the choice of model parameters. 
    
 For our purposes, the  most relevant previous works in the field  are    \cite{A:boundedness, A:single} by Anderson, \cite{GMS:geometric} by Gopalkrishnan, Miller, and Shiu, and  \cite{ADE:deviation,ADE:geometric}  by Agazzi, Dembo, and Eckmann.  In   \cite{A:boundedness,A:single}, Anderson developed the concept of ``tiers'' of complexes, and used them to study deterministically modeled reaction networks. Loosely speaking, tiers constitute a partition of the system complexes (see section \ref{sec:background} for relevant definitions) into sets related to reactions whose propensities have the same relative strength along a particular sequence of points in $\RR^d$.   The  works  \cite{A:boundedness, A:single} used tiers to prove that trajectories for reactions networks that were (i) weakly reversible and (ii) had a single linkage class, were necessarily   persistent (meaning that they cannot get arbitrarily close to the boundary of the state space, see Definition \ref{def:persistence}) and bounded, regardless of the choice of rate parameters.  These works closed the well-known Global Attractor Conjecture in the single linkage class case \cite{CDSS2009}.   Later, in  \cite{GMS:geometric}, Gopalkrishnan, Miller, and Shiu (i) introduced the notion of strongly endotactic networks (which are a subclass of \textit{endotactic networks}, introduced in \cite{CNP2013}), (ii) showed that weakly reversible networks that have  a single linkage class are strongly endotactic, and (iii) showed that deterministically modeled strongly endotactic networks are permanent (which  is a stronger condition than persistence and boundedness of trajectories, see Definition \ref{def:permanence}).   The main results of \cite{GMS:geometric} are stated here as Theorems \ref{thm:persistence} and \ref{thm:permamence}. Finally, the class of strongly endotactic networks have been fruitfully recently considered in \cite{ADE:deviation,ADE:geometric}, where a large deviation principle for stochastically modeled reaction networks that are strongly endotactic and that are also ``asiphonic'' is provided.
 
 The tier argument developed in \cite{A:single,A:boundedness} was analytical in nature, whereas the methods developed in \cite{GMS:geometric} and later utilized in \cite{ADE:deviation,ADE:geometric}, while quite similar to those developed in \cite{A:single,A:boundedness}, were more algebraic and geometric in nature.  In the present work, we will make the connections between the two works more precise.  Specifically, we will characterize strongly endotactic networks in regards to their tier structures.  
 
 Elucidating the connection between strongly endotactic networks and tiers is the first major contribution of this work, and provides a new proof technique for the study of strongly endotactic networks.     We will demonstrate the power of this new technique in two distinct ways. 
 \begin{enumerate}
 \item  We  show how the proofs of the major results related to strongly endotactic networks in both the deterministic and stochastic settings can be dramatically streamlined.   First, we will show how the main results of \cite{GMS:geometric} related to deterministic models of reaction networks that are strongly endotactic  follow in a straightforward manner by the tier characterization.  Second, we will show how the main analytical results of \cite{ADE:deviation,ADE:geometric} can be quickly recovered using our characterization.  
 
 \item We show that members of a particular subclass of strongly endotactic networks are positive recurrent when modeled stochastically, regardless of the choice of rate parameters.  
 \end{enumerate}
 We make one further contribution in this paper. It has been proven in a number of instances  that the behaviors of the associated deterministic and stochastic models for reaction networks are similar in a broad sense.  For example, there is theory connecting the dynamics of the two models on compact time intervals  \cite{K:stochastic, K:strong, ACK:ACR}, on pathwise approximations \cite{CW:st_intermediates, CW:det_intermediates}, and on similarities between their long time stationary behavior  \cite{ACK:product, ACKK:explosion, CW:CB, CJ:balance}. Hence, since it is proven in \cite{GMS:geometric} that deterministically modeled strongly endotactic networks have very well behaved trajectories in the sense made precise by Theorems \ref{thm:persistence} and \ref{thm:permamence}, it was natural to conjecture that all strongly endotactic networks are necessarily positive recurrent when stochastically modeled.  We show this conjecture to be false by providing strongly endotactic networks that are transient and even explosive, regardless of the choice of parameters for the model (see Examples \ref{ex:SE_transient} and \ref{ex:SE_explosive}).    (We note  that the  conjecture has independently been shown to be false in the recently submitted  paper \cite{AM2018} by Mattingly and Agazzi, where other examples are provided.)
 
The outline of the remainder of the paper is as follows.  In section \ref{sec:background}, we provide useful notation, and the relevant mathematical models.  In section \ref{sec:SE_networks}, we provide the definition of a strongly endotactic network. We also provide the examples alluded to in the previous paragraph demonstrating that not all strongly endotactic networks are recurrent, when modeled stochastically.
In section \ref{sec:tiers}, we provide the relevant definitions pertaining to tiers.  In particular, in subsection \ref{sec:SE_tiers} we provide our  main analytical result, Theorem \ref{thm:SE_tiers}, that characterizes strongly endotactic networks by their tier structures.  In section \ref{sec:permanence}, we use our results from section \ref{sec:tiers} to prove that deterministically modeled strongly endotactic networks are both persistent and permanent.  Therefore, the results of section \ref{sec:permanence} recover the main findings in \cite{GMS:geometric}.   In section \ref{sec:LDP}, we utilize our results from section \ref{sec:tiers} to recover \reply{a sufficient condition used in \cite{ADE:deviation,ADE:geometric} to prove a large deviation principle}. Finally, in section \ref{sec:PR_SE}, we use the results of section \ref{sec:tiers} to provide a new  subclass of reaction networks for which  positive recurrence is guaranteed, regardless of the choice of rate parameters.

\section{Background}
\label{sec:background}

\subsection{Notation}
Throughout the paper, we will denote by $\RR$, $\RR_{\geq0}$, and $\RR_{>0}$ the real, the non-negative real, and the positive real numbers, respectively. Similarly, we will denote by $\ZZ$, $\ZZ_{\geq0}$, and $\ZZ_{>0}$ the integer, the non-negative integer, and the positive integer numbers, respectively.
Given a vector $v\in\RR^d$, we say that the vector is \emph{positive} or \emph{non-negative} if $v$ is in $\RR^d_{>0}$ or $\RR^d_{\geq0}$, respectively.

Given two vectors $v,w\in\RR^d$, we will denote by $\scal{v}{w}$ their scalar product. Furthermore, we will write $v\geq w$ if the inequality holds component-wise. Moreover, we will use the following shorthand notation:
$$v^w=\prod_{i=1}^d v_i^{w_i},\quad v!=\prod_{i=1}^d v_i!,$$
where we use the usual convention $0^0 = 1$.
Finally, we will denote by $\ln(v)$ the vector of $\RR^d$ whose $i$th entry is $\ln(v_i)$ and we will denote by $\lfloor v\rfloor$ the vector whose $i$th entry is $\lfloor v_i \rfloor$.

Given a vector $v\in\RR^d$, we denote
$$\|v\|_\infty=\max\{|v_i|\,:\, 1\leq i\leq d\} \quad \text{and} \quad \|v\|_1 = \sum_{i=1}^d |v_i|.$$
Moreover, we denote by $v \vee 1$ the vector whose $i$th component is $\max\{v_i,1\}$.
For two sequences of positive real numbers $(a_n)_{n=0}^\infty$ and $(b_n)_{n=0}^\infty$, we write $a_n \gg b_n$ if $\lim_{n\to \infty} \frac{a_n}{b_n} = \infty$.

\subsection{Reaction networks}
A \emph{reaction network} is a triple $\G=(\Sp,\C,\R)$ where $\Sp$, $\C$, and $\R$ are defined as follows. $\Sp$ is a finite set of \emph{species}, that is a set of $d$ distinct symbols.  $\C$ is a finite set of \emph{complexes}.
 We assume each complex is a linear combinations of species on $\ZZ_{\geq0}$. Complexes will be regarded as vectors in $\ZZ_{\geq0}^d$ in the paper, given that an ordering for the species is chosen. Finally, $\R$ is a finite set of \emph{reactions}, that is a finite subset of $\C\times\C$ with the property that for any $y\in\C$ we have $(y,y)\notin\R$. Usually, a reaction $(y,y')$ is denoted by $y\to y'$, and we adopt this notation in the paper.

We say that $y$ is a \emph{source complex} if there is a reaction of the form $y\to y'$, and we say that $y$ is a \emph{product complex} if there is a reaction of the form $y'\to y$. Moreover, given a reaction $y\to y'$ we say that $y$ is the source and $y'$ is the product of $y\to y'$.

It is often convenient to denote the species as $\{S_1,\dots, S_d\}$, as this allows us to refer to species via their index.  In particular, we will  write both $S_i\in \Sp$ and $i \in \Sp$.  However, in practical examples the set of species is often given as some subset of $\{A,B,C,\dots\}$. 

Given a reaction network $\G$, a directed graph with nodes $\C$ and edges $\R$ can be constructed. This directed graph is called \emph{reaction graph}. See Examples  \ref{ex:SE_transient}, \ref{ex:SE_explosive}, and \ref{ex:proper_not_transversal} for examples of such graphs. In this paper we assume that all complexes appear in at least one reaction and all species appear in at least one complex. Under this assumption, the reaction graph uniquely determines a reaction network. In fact, reaction networks are usually described by means of their reaction graph, and the same will be done in the present paper.

The \emph{stoichiometric subspace} is defined as
$$S=\spann_{\RR}\{y'-y\,:\,y\to y'\in\R\},$$
and for any $x\in \RR^d$ the set $x + S = \{x + s, \text{ with } s \in S\}$ is termed the stoichiometric compatibility class determined by $x$.  Similarly, the sets $(x+S) \cap \RR^d_{\ge 0}$ are the nonnegative stoichiometric compatibility classes.
\subsection{Deterministic model}
Deterministic models are typically used when the counts of the relevant molecules (the species) are large and their concentrations change nearly continuously in time  accordingly to the propensities of the different chemical transformations.

Formally, given a reaction network $\G$, a \emph{(deterministic) kinetics} $\Lambda$ for $\G$ is a map assigning a function $\lambda_{y\to y'}:\RR_{\geq0}^d\to \RR_{\geq0}$ to each reaction $y\to y'\in \R$. The functions $\lambda_{y\to y'}$ are called \emph{(deterministic) rate functions}.
We call  a pair $(\G,\Lambda)$, where $\G$ is a reaction network and $\Lambda$ is a deterministic kinetics, a \emph{deterministic reaction system}. \reply{In this setting, the concentration of the different chemical species is of interest, which should be understood as the average number of molecules of the different species per unit of volume. If molecules are approximated with dimensionless points in space, concentrations are non-negative real numbers ranging from 0 to $\infty$.} Given an initial condition $z(0)\in\RR^d_{\geq0}$, the change in chemical species concentration is then modeled as the solution to the  integral equation 
\begin{equation}\label{eq:ODE}
 z(t)=z(0)+\sum_{y\to y'\in\R} (y'-y)\int_0^t\lambda_{y\to y'}(z(s))\,ds,
\end{equation}
\reply{if the solution exists and is unique.}
Note that at any time point $t$, $z(t)- z(0)\in S$. That is, $z(t)$ is confined within the same stoichiometric compatibility class as $z(0)$.

A popular choice of kinetics is given by \emph{(deterministic) mass action kinetics}, where for any reaction $y\to y'\in\R$\reply{, the associated rate function is given by}
$$\ld_{y\to y'}(x)=\kappa_{y\to y'} x^y,$$
for some positive constant $\kappa_{y\to y'}$, termed a \emph{reaction constant}. \reply{Note that under the assumption of mass action kinetics,  the solution to \eqref{eq:ODE} exists and is unique for any initial condition, since the rates $\lambda_{y\to y'}$ are polynomials and therefore locally Lipschitz. In contrast, global existence is not guaranteed, and in case of a blow-up at a finite time $t^\star$ we consider the solution to \eqref{eq:ODE} only in the interval $[0,t^\star)$.} 

Mass action kinetics corresponds to the hypothesis that the molecules of the chemical species involved in the transformations are well-stirred. \reply{In the present paper, we will focus on this choice of kinetics, which is typically studied in reaction network theory \cite{erdi1989mathematical} and in biochemistry \cite{edelstein2005mathematical,ingalls2013mathematical}. More general kinetics (such as Michaelis-Menten kinetics) can be derived as limits of mass action kinetics when different chemical reactions operate over time scales of different orders of magnitude \cite{edelstein2005mathematical,ingalls2013mathematical}.}

\subsection{Stochastic model}\label{sec:stochastic_model}
Stochastic models are typically used when we are interested in the counts of the different chemical species. This situation typically arises when the abundances are low, as is often the case in the biological setting. 

The formal definition of stochastic reaction systems follows  the definition of deterministic reaction systems closely: given a reaction network $\G$, a \emph{(stochastic) kinetics} $\Lambda$ for $\G$ is a map assigning a function $\lambda_{y\to y'}:\ZZ_{\geq0}^d\to \RR_{\geq0}$ to each reaction $y\to y'\in \R$. The functions $\lambda_{y\to y'}$ are called \emph{(stochastic) rate functions}.
A \emph{stochastic reaction system} is a pair $(\G,\Lambda)$, where $\G$ is a reaction network and $\Lambda$ is a stochastic kinetics. The change in chemical species counts is modeled by means of a continuous-time Markov chain with state space $\ZZ_{\geq0}^d$, whose transition rates are given by
$$q(x,x+\xi)=\sum_{\substack{y\to y'\in\R\\ y'-y=\xi}}\lambda_{y\to y'}(x).$$
\reply{In case of an explosion occurring at a finite-time $T_\infty$, we consider the $X(t)=\Delta$ for any $t\geq T_\infty$, where $\Delta$ is a cemetery state not contained in $\ZZ_{\geq0}^d$.} Another representation of the Markov chain $X$, due to Kurtz \cite{Kurtz80},  is given as follows:
\begin{align}\label{eq:stochastic model}
X(t)=X(0)+\sum_{y\to y'\in\R} (y'-y)Y_{y\to y'}\left(\int_0^t\lambda_{y\to y'}(X(s))\,ds\right)
\end{align}
where $Y_{y\to y'}$ are independent, unit-rate Poisson processes.  Letting $T_n$ denote the time of the $n$th transition of the model, the above representation is valid up until $T_\infty = \lim_{n\to \infty} T_n$. Here, the counting process
$$R_{y\to y'}(t)=Y_{y\to y'}\left(\int_0^t\lambda_{y\to y'}(X(s))\,ds\right)$$
keeps track of how many times the reaction $y\to y'$ has occurred by time $t$.

From \eqref{eq:stochastic model} we  have that  $X(t)-X(0)\in S$ for any time point $t$.  Hence, and just as for the deterministic model, the stochastic process $X$ is confined within the stoichiometric compatibility class determined by $X(0)$.  

A popular choice of stochastic kinetics is given by  \emph{(stochastic) mass action kinetics}, where for any reaction $y\to y'\in\R$
\begin{align}\label{eq:stochastic mass action}
\ls_{y\to y'}(x)=\kappa_{y\to y'} \mathbbm{1}_{\{x\geq y\}}\frac{x!}{(x-y)!},
\end{align}
for some positive constant $\kappa_{y\to y'}$, called a \emph{reaction constant}. Similarly with  deterministic reaction networks, mass action kinetics corresponds to the hypothesis that the molecules are well-stirred in space. \reply{The analysis of the present paper focus on this choice of kinetics.}

\section{Strongly endotactic networks}
\label{sec:SE_networks}
We give here the definition of strongly endotactic networks, that was first introduced in \cite{GMS:geometric}.

\begin{definition}
 Consider a reaction network $\G$, and a vector $w\in\RR^d$ that is not orthogonal to the stoichiometric subspace $S$. We say that a complex $y\in\C$ is \emph{$w-$maximal} if $y$ is a source complex and for any other source complex $y'$ we have $\scal{w}{y'-y}\leq 0$.
\end{definition}

\begin{definition}\label{def:strongly_endotactic}
 A reaction network $\G$ is \emph{strongly endotactic} if for all vectors $w\in\RR^d$ that are not orthogonal to the stoichiometric subspace $S$ the following holds:
 \begin{enumerate}
  \item if $y$ is a $w-$maximal complex, then for all reactions of the form $y\to y'$ we have $\scal{w}{y'-y}\leq 0$;
  \item there exists a $w-$maximal complex $y$ and a reaction $y\to y'\in\R$ with $\scal{w}{y'-y}<0$.
 \end{enumerate}
\end{definition}

Strongly endotactic networks are a generalization of weakly reversible single linkage class networks studied in \cite{A:single}: the following proposition, which \reply{is corollary 3.20 in \cite{GMS:geometric}}, makes the statement precise.

\begin{proposition}\label{prop:WR_SE}
 Assume $\G$ is a reaction network such that for any two complexes $y, y'$ there exists a sequence of $\ell$ complexes, $y=y_1, y_2, \dots, y_\ell=y'$, such that $y_j\to y_{j+1}\in\R$ for all $1\leq j\leq \ell-1$ (this condition is equivalent to saying that $\G$ is \emph{weakly reversible and consists of a single linkage class}). Then, $\G$ is strongly endotactic.
\end{proposition}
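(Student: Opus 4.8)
The plan is to exploit the fact that, under the stated hypothesis, every complex is simultaneously a source complex and a product complex. Indeed, fixing $y\in\C$ and applying the path hypothesis to the pair $y,y'$ for some $y'\neq y$ produces a reaction emanating from $y$, so $y$ is a source complex (and, symmetrically, a product complex). Hence, for any vector $w\in\RR^d$ that is not orthogonal to $S$, the notion of a $w$-maximal complex collapses to the plain statement $\scal{w}{y}=\max_{z\in\C}\scal{w}{z}$. I will write $M$ for this maximum and $\C_M=\{z\in\C:\scal{w}{z}=M\}$ for the nonempty set of $w$-maximal complexes, and verify the two conditions of Definition~\ref{def:strongly_endotactic} in turn.

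Condition~(1) is then immediate: if $y\in\C_M$ and $y\to y'\in\R$, then $y'\in\C$, so $\scal{w}{y'}\leq M=\scal{w}{y}$, i.e.\ $\scal{w}{y'-y}\leq0$.

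For condition~(2), I would first observe that $\C_M\subsetneq\C$: were $\C_M=\C$, then $\scal{w}{y'-y}=0$ for every reaction $y\to y'\in\R$, and since $S=\spann_{\RR}\{y'-y:y\to y'\in\R\}$ this would force $w\perp S$, contrary to the choice of $w$. Now pick any $y_0\in\C_M$ and any $z\in\C\setminus\C_M$, and use the path hypothesis to obtain a directed path $y_0=y_1\to y_2\to\cdots\to y_\ell=z$ in the reaction graph. Since this path starts inside $\C_M$ and ends outside it, there is an index $j$ with $y_j\in\C_M$ and $y_{j+1}\notin\C_M$; the complex $y_j$ is $w$-maximal, the reaction $y_j\to y_{j+1}$ lies in $\R$, and $\scal{w}{y_{j+1}}<M=\scal{w}{y_j}$, so $\scal{w}{y_{j+1}-y_j}<0$. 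This establishes condition~(2), and therefore $\G$ is strongly endotactic.

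I do not expect a genuine obstacle in this argument. The one step that does the real work is the reduction observation that every complex is both a source and a product complex: it simultaneously trivializes condition~(1) and turns $\C_M$ into a legitimate set of $w$-maximal complexes on which the strong-connectivity argument for condition~(2) can be run. (Note that the standing assumption that every complex appears in at least one reaction rules out the degenerate case $|\C|=1$, and that here $S\neq\{0\}$, so admissible vectors $w$ do exist.)
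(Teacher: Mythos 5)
Your argument is correct, and it is worth noting that the paper does not actually prove this proposition itself: it simply cites it as Corollary 3.20 of \cite{GMS:geometric}. Your write-up therefore supplies a self-contained, elementary proof where the paper defers to the literature, and the substance of your argument is exactly the strong-connectivity idea underlying the cited result. The two key observations are sound: since the reaction graph is strongly connected and $|\C|\geq 2$ (forced by the standing assumption that every complex appears in a reaction, together with $(y,y)\notin\R$), every complex is a source complex, so the $w$-maximal complexes are precisely the maximizers of $\scal{w}{\cdot}$ over all of $\C$, which makes condition (1) of Definition~\ref{def:strongly_endotactic} immediate; and $\C_M\subsetneq\C$ whenever $w\not\perp S$, because $\C_M=\C$ would force $\scal{w}{y'-y}=0$ for every reaction and hence $w\in S^\perp$, after which a directed path from a $w$-maximal complex to a non-maximal one must cross the boundary of $\C_M$ along a reaction $y_j\to y_{j+1}$ with $\scal{w}{y_{j+1}-y_j}<0$, giving condition (2). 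Compared with the route in \cite{GMS:geometric}, which derives the statement from their general machinery for (strongly) endotactic networks, your direct argument buys brevity and independence from that framework, at the (negligible) cost of not yielding any of the more general endotacticity statements proved there.
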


Strongly endotactic network are not necessarily weakly reversible single linkage class networks, examples are provided in Examples \ref{ex:SE_transient} and \ref{ex:SE_explosive}. 
As discussed in the Introduction, due to the stable behavior of the deterministic mass action systems associated with strongly endotactic networks (see Theorems \ref{thm:persistence} and \ref{thm:permamence}), it was conjectured that stochastic mass action systems associated to strongly endotactic networks would be positive recurrent for any choice of rate constants. This is not the case: in Example \ref{ex:SE_transient} a strongly endotactic network is considered that results in a transient system if endowed with stochastic mass action kinetics, for any choice of rate constants. Furthermore, in Example \ref{ex:SE_explosive} we show that a similar model is explosive for any choice of rate constants. 

\begin{example}\label{ex:SE_transient}
 Consider the reaction network 
 $$0\to 2A+B\to 4A+4B \to A.$$
 The reaction network is strongly endotactic: to check that this statement is true, it is convenient to draw the complexes considered as vectors on a Cartesian plane, and depict the reactions as arrows among them. This is done in Figure \ref{fig:A}. Now consider the shaded regions of Figure \ref{fig:B}: it can be checked that
 \begin{itemize}
  \item If $w\in R_1$, then the $w-$maximal complex is $4A+4B$. The only reaction with source complex $4A+4B$ is $4A+4B\to A$, and we have $\scal{w}{(-3,-4)}<0$.
  \item If $w\in R_2$, then the $w-$maximal complex is $0$. The only reaction with source complex $0$ is $0\to 2A+B$, and we have $\scal{w}{(2,1)}<0$.
  \item If $w\in R_3$, then the $w-$maximal complex is $2A+B$. The only reaction with source complex $2A+B$ is $2A+B\to 4A+4B$, and we have $\scal{w}{(2,3)}<0$.
  \item If $w$ is a positive multiple of $(-1,1)$, then the $w-$maximal complexes are $0$ and $4A+4B$, which are source complexes of $0\to 2A+B$ and $4A+4B\to A$. In this case, we have $\scal{w}{(2,1)}<0$ and $\scal{w}{(-3,-4)}<0$.
  \item If $w$ is a positive multiple of $(1,-2)$, then the $w-$maximal complexes are $0$ and $2A+B$, which are source complexes of $0\to 2A+B$ and $2A+B\to 4A+4B$. In this case, we have $\scal{w}{(2,1)}=0$ and $\scal{w}{(2,3)}<0$.
  \item If $w$ is a positive multiple of $(1,-2/3)$, then the $w-$maximal complexes are $2A+B$ and $4A+4B$, which are source complexes of $2A+B\to 4A+4B$ and $4A+4B\to A$. In this case, we have $\scal{w}{(2,3)}=0$ and $\scal{w}{(-3,-4)}<0$.
 \end{itemize}
 Hence, the network is strongly endotactic. A general strategy to recognize strongly endotactic network, called the \emph{sweep test}, and which we essentially carried out here in detail, is discussed in \cite{GMS:geometric}.

 \begin{figure}[h]
 \captionsetup[subfigure]{position=b}
 \centering
 \subcaptionbox{The complexes of the network of Example \ref{ex:SE_transient}, considered as vectors, are drawn. The reactions are represented by arrows. The shaded region represents the convex hull generated by the source complexes. Note that all reactions originated on the faces of the convex hull point inside the hull.\label{fig:A}}{
   \begin{tikzpicture}
    \begin{axis}[
    axis y line=middle,axis x line=middle,
      xmin=-0.5, xmax=4.8, ymin=-0.5, ymax=4.8,
      unit vector ratio*=1 1 1, 
      ticks=none, 
      xlabel={$A$}, ylabel={$B$},
      width=0.6\textwidth]
     \addplot[mark=none,gray, draw opacity=0.2, fill=gray, fill opacity=0.2]
             coordinates{
                          (0,0)
                          (2,1)
                          (4,4)
                          (0,0)
                          };
     \addplot[mark=none, black, -triangle 45]
              coordinates{
                          (0,0)
                          (2,1)
                          };
     \addplot[mark=none, black, -triangle 45]
              coordinates{
                          (2,1)
                          (4,4)
                          };
     \addplot[mark=none, black, -triangle 45]
              coordinates{
                          (4,4)
                          (1,0)
                          };
     \node[label={135:{0}},inner sep=2pt] at (axis cs:0,0) {};
     \node[label={0:{$2A+B$}},inner sep=2pt] at (axis cs:2,1) {};
     \node[label={90:{$4A+4B$}},inner sep=2pt] at (axis cs:4,4) {};
     \node[label={270:{$A$}},inner sep=2pt] at (axis cs:1,0) {};
    \end{axis}
   \end{tikzpicture}}  
   \hspace*{1em}
   \subcaptionbox{The space is divided into the open regions $R_1$, $R_2$, and $R_3$, which correspond to the loci of vectors $w$ with different $w-$maximal complexes, and into the rays separating them (which are orthogonal to the faces of the convex hull generated by the source complexes). The vectors $w$ laying on the separating lines have two $w-$maximal complexes. \label{fig:B}}{
   \begin{tikzpicture}
    \begin{axis}[
    axis y line=middle,axis x line=middle,
      xmin=-2.5, xmax=2.5, ymin=-2.5, ymax=2.5,
      unit vector ratio*=1 1 1, 
      ticks=none, 
      xlabel={$A$}, ylabel={$B$},
      width=0.6\textwidth]
      \addplot[mark=none, gray, draw opacity=0.3, fill=gray, fill opacity=0.3]
             coordinates{
                          (0,0)
                          (-6,6)
                          (6,6)
                          (6,-4)
                          (0,0)
                          };
      \addplot[mark=none, gray, draw opacity=0.6, fill=gray, fill opacity=0.6]
             coordinates{
                          (0,0)
                          (-6,6)
                          (-6,-6)
                          (6,-12)
                          (0,0)
                          };
      \addplot[mark=none, gray, draw opacity=0.9, fill=gray, fill opacity=0.9]
             coordinates{
                          (0,0)
                          (6,-12)
                          (6,-4)
                          (0,0)
                          };
      \addplot[mark=none, domain=-2.5:0] {-x};
      \addplot[mark=none, domain=0:2.5] {-2*x};
      \addplot[mark=none, domain=0:2.5] {-2*x/3};
      \node[label={180:{(-1,1)}},circle,fill, inner sep=2pt] at (axis cs:-1,1) {};
      \node[label={180:{(1,-2)}},circle,fill, inner sep=2pt] at (axis cs:1,-2) {};
      \node[label={0:{(1,-2/3)}},circle,fill, inner sep=2pt] at (axis cs:1,-2/3) {};
      \node[label={0:{$R_1$}},inner sep=2pt] at (axis cs:1,1) {};
      \node[label={180:{$R_2$}},inner sep=2pt] at (axis cs:-1,-1) {};
      \node[label={0:{$R_3$}},inner sep=2pt] at (axis cs:1.5,-2) {};
    \end{axis}
   \end{tikzpicture}}
\end{figure}

Nevertheless, any stochastic mass action system associated with the network is transient. Indeed, from any state $x=(x_A, x_B)\in\ZZ^d$ there is a positive probability that the reaction $0\to 2A+B$ occurs $j$ consecutive times, with $x_A+2j\geq x_B+j$ and $x_B+j$ being divisible by 4. There is then a positive probability that the reaction $4A+4B\to 0$ takes place until no molecule of $B$ is left, and a state of the form $x'=(x'_A,0)$ is reached. Then, due to continuity of probability measures, the probability, $p(x_A'),$ that from the state $x'$ the infinite repetition of the sequence of reactions $0\to 2A+B$, $2A+B\to 4A+4B$ and $4A+4B\to A$ take place is 
\begin{align*}
 p(x'_A)=\prod_{n=x'_A+1}^\infty 1&\cdot\frac{\kappa_{2A+B\to 4A+4B}(n+1)n}{\kappa_{2A+B\to 4A+4B}(n+1)n+\kappa_{0\to 2A+B}}\\
 &\cdot \frac{\kappa_{4A+4B\to A}(n+3)(n+2)(n+1)n\cdot 4!}{\kappa_{4A+4B\to A}(n+3)(n+2)(n+1)n\cdot 4!+\kappa_{2A+B\to 4A+4B}(n+3)(n+2)\cdot 4+\kappa_{0\to 2A+B}}.
\end{align*}
An infinite product of the form $\prod_n a_n b_n$, where $a_n,b_n \in (0,1)$, will converge to a nonzero value if and only if the infinite sum $\sum_n \left[ (1-a_n) + (1-b_n)\right]$ converges; see   \cite[Theorem 15.4]{Rudin:real_and_complex}.  The sum
\begin{align*}
 \sum_{n=x'_A+1}^\infty &\Bigg( \frac{\kappa_{0\to 2A+B}}{\kappa_{2A+B\to 4A+4B}(n+1)n+\kappa_{0\to 2A+B}}+\\
&+\frac{\kappa_{2A+B\to 4A+4B}(n+3)(n+2)\cdot 4+\kappa_{0\to 2A+B}}{\kappa_{4A+4B\to A}(n+3)(n+2)(n+1)n\cdot 4!+\kappa_{2A+B\to 4A+4B}(n+3)(n+2)\cdot 4+\kappa_{0\to 2A+B}}\Bigg)<\infty,
\end{align*}
has terms of order $n^{-2}$, and so converges.  Thus, we may conclude that $p(x'_A)>0$.
Hence, it follows that there is a positive probability of leaving the state $x$ forever through the repetition of the sequence of reactions $0\to 2A+B$, $2A+B\to 4A+4B$ and $4A+4B\to A$, which increases the number of molecules of $A$ at each cycle. It follows that  every state is transient, independently on the choice of positive rate constants.  
\hfill $\square$
\end{example}

We now show how a slight modification of the previous example leads to a strongly endotactic network that explodes for any initial condition.

\begin{example}\label{ex:SE_explosive}
 By modifying the reaction network in Example \ref{ex:SE_transient} we obtain
 $$0\to2A\to 4A+B\to 6A+4B \to 3A.$$
 The network is still strongly endotactic, as can be checked by utilizing a similar techinque as in Example \ref{ex:SE_transient}. Moreover, and similarly as in Example \ref{ex:SE_transient}, it can be verified that from any state $x\in\ZZ^d$ it is possible to reach a state of the form $x'=(x'_A,0)$, and by letting the reaction $0\to 2A$ take place we may assume that $x'_A\geq 2$. There is then a positive probability that starting from $x'$ the infinite repetition of the sequence of reactions $2A \to 4A+B$, $4A + B  \to 6A + 4B$, and  $6A + 4B \to 3A$ 
 occurs, each cycle increasing the number of molecules of $A$ by 1.
 The main difference with the previous example is that by the monotone convergence theorem the expected time it takes for the infinite repetition of the reaction sequence $2A \to 4A+B$, $4A + B  \to 6A + 4B$, and  $6A + 4B \to 3A$ to take place, $m(x_A')$, is bounded by
 \begin{align*}m(x'_A)<\sum_{n=x'_A}^\infty &\Bigg(\frac{1}{\kappa_{2A\to 4A+B}n(n-1)}+\frac{1}{\kappa_{4A+B\to 6A+4B}(n+2)(n+1)n(n-1)}\\
 &+\frac{1}{\kappa_{6A+4B\to 3A}(n+4)(n+3)(n+2)(n+1)n(n-1)\cdot 4!}\Bigg)<\infty,
 \end{align*}
 so the model is explosive \cite{norris:markov}. For more on explosive stochastic reaction networks, see \cite{ACKK:explosion}.  \hfill $\square$
\end{example}

We provide an example that is not strongly endotactic.  This model will be considered in Remark \ref{remark:67876}.
\begin{example}\label{ex:proper_not_transversal}
 The reaction network
 $$A\rightleftharpoons 2B, \quad A+C\rightleftharpoons B+C$$
 is not strongly endotactic. Indeed, consider the vector $w=(1,1,10)$: it is not orthogonal to the stoichiometric subspace since $\scal{w}{(-1,2,0)}\neq0$, $(-1,2,0)$ being the reaction vector of $A\to 2B$. It can be checked that the $w-$maximal complexes are $A+C$ and $B+C$, but there is no reaction $y\to y'\in\R$ with $y\in\{A+C, B+C\}$ and $\scal{w}{y'-y}<0$.
 
 It is interesting to note that within every stoichiometric compatibility class the amount of molecules of $C$ is kept constant, hence the above network equipped with mass-action kinetics is equivalent to
 $$B\rightleftharpoons A\rightleftharpoons 2B,$$
 for a suitable choice of rate constants. Somewhat surprisingly, the latter is strongly endotactic by Proposition \ref{prop:WR_SE}.  \hfill $\square$
\end{example}

\section{Tiers}
\label{sec:tiers}

This section is broken into 3 subsections.  In subsection \ref{sec:4def}, we introduce the relevant definitions related to tiers.  We also provide a few results related to these definitions.  In subsection \ref{sec:SE_tiers}, we provide Theorem \ref{thm:SE_tiers}, which is our main technical result and characterizes strongly endotactic networks in terms of their tier structures.  Finally, in subsection \ref{sec:lyapunov}, we collect results relating tier sequences with a commonly used Lyapunov function that plays a role in each of the subsequent results of the present paper.

\subsection{Definitions}
\label{sec:4def}
\begin{definition}
 A sequence $(x_n)_{n=0}^\infty$ of positive vectors of $\RR^d_{>0}$ is called a \emph{tier sequence} if
 $$\lim_{n\to\infty}\|\ln(x_n)\|_{\infty}=\infty$$
 and for all pairs of complexes $y,y'\in\C$ the limit
 $$\lim_{n\to\infty} x_n^{y'-y}$$
 exists (it could be infinity). Moreover, a tier sequence is \emph{proper} if for all $n, m\in\ZZ_{\geq0}$ we have $x_n-x_m\in S$.
\end{definition}

\begin{remark}\label{remark1}
 Note that, given a sequence $(x_n)_{n=0}^\infty$ of positive vectors in $\RR^d_{>0}$ with $\lim_{n\to\infty}\|\ln(x_n)\|_{\infty}=\infty$, it is always possible to extract a subsequence that is a tier sequence. This follows from the fact that there are finitely many complexes.
\end{remark}

\begin{remark}\label{remark:proper}
 The definition of tier sequence is tied to the choice of mass action kinetics for the reaction network. Indeed, $x_n^y$ is proportional to the deterministic mass action rate function associated with a reaction whose source is $y$, and $x_n^{y-y'}$ is nothing but the ratio $x_n^y/x_n^{y'}$. Hence, a sequence is a tier sequence if a ranking of the reaction rates $\ld$ along $x_n$ can be made, in the sense specified by the next definition. \reply{We also note here that the focus of this paper will be on the relative behavior of reaction rate functions along sequences diverging to infinity, with the aim to understand the behavior of the dynamics of the associated reaction network. Since the stoichiometric compatibility classes are invariant sets for both the deterministic and the stochastic models, it makes sense to restrict our analysis to proper tier sequences.}
\end{remark}

\begin{definition}\label{def:tier}
 Given a tier sequence $(x_n)_{n=0}^\infty$, we define tiers as subsets of $\C$ in the following recursive manner:
 \begin{enumerate}
  \item we say that a complex $y$ is in \emph{tier 1} (and write $y\in T^1_{(x_n)}$) if for all complexes $y'\in \C$
  $$\lim_{n\to\infty}x_n^{y-y'}>0;$$
  \item we say that a complex $y$ is in \emph{tier $i$} (and write $y\in T^i_{(x_n)}$) if there exists $y'\in T^{i-1}_{(x_n)}$ with
  $$\lim_{n\to\infty}x_n^{y-y'}=0$$
  and for all complexes $y'\notin\bigcup_{j=1}^{i-1}T^j_{(x_n)}$ we have
  $$\lim_{n\to\infty}x_n^{y-y'}>0.$$
 \end{enumerate}
\end{definition}

Given a tier sequence, tiers describe a partition of $\C$. We further define an order relation on $\C$ in the following way: we write $y\tle y'$ if $y\in T^i_{(x_n)}$, $y'\in T^j_{(x_n)}$ and $i\geq j$. Similarly, we write $y\tl y'$ if $y\in T^i_{(x_n)}$, $y'\in T^j_{(x_n)}$ and $i> j$. Note that the inequality on the indexes of the tiers is reversed, and $y\tl y'$ if and only if the ratio $x_n^y/x_n^{y'}$ converges to 0 as $n$ tends to infinity, meaning that $x_n^{y}$ is much smaller than $x_n^{y'}$ for large $n$. Finally, we write $y\te y'$ if $y$ and $y'$ are in the same tier. Note that by definition for all complexes $y\in\C$ we have $y\te y$.

\reply{
\begin{definition}
 We say that a tier sequence $(x_n)_{n=0}^\infty$ is \emph{transversal} if there exists at least one reaction $y\to y'\in\R$ such that
 $$\lim_{n\to\infty}|\ln(x_n^{y'-y})|=\infty.$$
\end{definition}
\begin{remark}
 Equivalently, transversal tier sequences can be defined as those tier sequences for which there exists $y\to y'\in\R$ such that the complexes $y$ and $y'$ are in different tiers. These reactions will play an important role in the proofs of our results. We will prove in Lemma~\ref{lem:proper_transversal} that all proper tier sequences (which are those we will focus on, see Remark~\ref{remark:proper}) are transversal.
\end{remark}
}

\begin{example}\label{ex:tier_sequences}
 Consider the reaction network
 $$A\rightleftharpoons B \rightleftharpoons 2C$$
 and the sequences $(x_n)_{n=0}^\infty$ and $(\hat x_n)_{n=0}^\infty$ defined by
 $$x_n=\left(\frac{1}{n}, 5-\frac{1}{n}-\frac{1}{2\sqrt{n}}, \frac{1}{\sqrt{n}}\right)\quad\text{and}\quad 
 \hat x_n=\left(e^n,2e^n,\frac{1}{n}\right).$$
Then, $(x_n)_{n=0}^\infty$ is a proper tier sequence, which we demonstrate now.  The entries $x_{n,1}$ and $x_{n,3}$ go to zero as $n$ goes to infinity, which implies $\lim_{n\to\infty}\|\ln(x_n)\|_{\infty}=\infty$. Moreover, 
 $$\lim_{n\to\infty}x_n^{(-1,0,2)}=1\quad\text{and}\quad\lim_{n\to\infty}x_n^{(-1,1,0)}=\infty,$$
 which implies that $(x_n)_{n=0}^\infty$ is a tier sequence and $A\te 2C$ and $A\tl B$. Finally, $(x_n)_{n=0}^\infty$ is proper because for any $n\geq 1$
 $$x_{n+1}-x_n=\left(\frac{1}{n+1}-\frac{1}{n}\right)(1,-1,0)+\left(\frac{1}{2\sqrt{n+1}}-\frac{1}{2\sqrt{n}}\right)(0, -1, 2)\in S.$$
 For what concerns $(\hat x_n)_{n=0}^\infty$, we still have $\lim_{n\to\infty}\|\ln(\hat x_n)\|_{\infty}=\infty$. Moreover, 
 $$\lim_{n\to\infty}\hat x_n^{(0,-1,2)}=0\quad\text{and}\quad\lim_{n\to\infty}x_n^{(-1,1,0)}=2,$$ 
 so $(\hat x_n)_{n=0}^\infty$ is a tier sequence and $A\tewrt{\hat x_n} B$ and $2C\tlwrt{\hat x_n} A$. Finally, $(\hat x_n)_{n=0}^\infty$ is transversal but not proper, indeed
 $$\lim_{n\to\infty}|\ln(\hat x_n^{(0,-1,2)})|=\infty$$
 but for any $n\geq 1$
 $$\scal{\hat x_{n+1}-\hat x_{n}}{(2,-2,1)}=-2(e^{n+1}-e^n)+\frac{1}{n+1}-\frac{1}{n}\neq0,$$
 and $(2,-2,1)$ is orthogonal to $S$ (hence $\hat x_{n+1}-\hat x_{n}\notin S$).\hfill $\square$
\end{example}

The following result connects proper and transversal tier sequences. As illustrated in Example \ref{ex:tier_sequences}, the converse does not hold.

\begin{lemma}\label{lem:proper_transversal}
 A proper tier sequence is transversal.
\end{lemma}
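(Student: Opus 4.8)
The plan is to argue by contradiction: suppose $(x_n)_{n=0}^\infty$ is a proper tier sequence that is not transversal, i.e.\ $\lim_{n\to\infty}x_n^{y'-y}$ is a finite positive number for every reaction $y\to y'\in\R$. Equivalently, every reaction vector $\xi=y'-y$ satisfies $\scal{\ln(x_n)}{\xi}=O(1)$ as $n\to\infty$. Since the reaction vectors span the stoichiometric subspace $S$, this would force $\scal{\ln(x_n)}{s}=O(1)$ for every $s\in S$; more precisely, the projection of $\ln(x_n)$ onto $S$ stays bounded. The idea is that this boundedness of the ``$S$-component'' of $\ln(x_n)$ clashes with properness, which forces $x_n-x_0\in S$, together with the defining condition $\|\ln(x_n)\|_\infty\to\infty$.

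First I would set up coordinates: write $v_n=\ln(x_n)$ and decompose $v_n = v_n^S + v_n^{S^\perp}$, the orthogonal projections onto $S$ and $S^\perp$. Non-transversality gives $\sup_n\|v_n^S\|<\infty$ (since a bounded set of inner products against a spanning set of $S$ bounds the projection). Next I would exploit properness: $x_n = x_0 + s_n$ with $s_n\in S$. The key quantitative step is a comparison between the additive increment $s_n\in S$ and the multiplicative/logarithmic increment $v_n-v_0\in\ln(x_0+s_n)-\ln(x_0)$. Coordinatewise, $\ln(x_{n,i}) - \ln(x_{0,i}) = \ln(1 + s_{n,i}/x_{0,i})$. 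I would split the coordinate set according to whether $x_{n,i}\to\infty$, $x_{n,i}\to 0$, or $x_{n,i}$ stays in a compact subset of $(0,\infty)$ (passing to a further subsequence is harmless, since transversality of a sequence is inherited by any subsequence, and properness is too). On the coordinates where $x_{n,i}$ is bounded away from $0$ and $\infty$, $v_{n,i}$ is bounded. On a coordinate where $x_{n,i}\to\infty$ we have $s_{n,i}\to\infty$ and $\ln(x_{n,i})\sim\ln(s_{n,i})$; on a coordinate where $x_{n,i}\to 0$ we have $s_{n,i}\to -x_{0,i}$, so $s_n$ is bounded there. Hence the unbounded part of $v_n$ comes entirely from coordinates where $s_{n,i}\to+\infty$, and on those coordinates $v_{n,i}\to+\infty$ and $s_{n,i}\to+\infty$ simultaneously; in particular $s_n$ itself is unbounded, and $\|v_n\|_\infty\to\infty$ is driven by this blow-up.

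Now the contradiction: let $I$ be the (nonempty, by $\|v_n\|_\infty\to\infty$ and the case analysis) set of indices $i$ with $x_{n,i}\to\infty$. Consider the vector $e_I=\sum_{i\in I} e_i$ and its projection onto $S$; since $s_n\in S$ has all coordinates in $I$ tending to $+\infty$ while the coordinates outside $I$ stay bounded, one shows $\scal{v_n}{u}\to+\infty$ for a suitable $u\in S$ — concretely, write $s_n = \sum_j c_{j,n}\,\eta_j$ in terms of a fixed spanning set $\{\eta_j\}\subseteq S$ of reaction vectors, note some $|c_{j,n}|\to\infty$, and pair against the reaction vector $\eta_j$: because $v_{n,i}=\ln(x_{0,i}+s_{n,i})$ grows like $\ln$ of the dominant term of $s_{n,i}$, one checks $\scal{v_n}{\eta_j}$ cannot remain bounded for all $j$. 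This contradicts non-transversality. I would phrase the final step by choosing, along a subsequence, the coordinate $i^\star$ realizing $\|v_n\|_\infty$, noting $i^\star\in I$ after refinement, and picking a reaction $y\to y'$ with $(y'-y)_{i^\star}\neq 0$ (it exists because $e_{i^\star}$ is not orthogonal to $S$, as $i^\star\in I$ forces the $i^\star$-coordinate of $s_n\in S$ to be unbounded); then a direct estimate shows $|\ln(x_n^{y'-y})|\to\infty$ unless the contributions of the other coordinates miraculously cancel the leading $\ln$-growth at $i^\star$ to within $O(1)$ for every such reaction simultaneously — and ruling this out is exactly the linear-algebra core of the argument, which one settles by working with a maximal independent subset of reaction vectors.

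The main obstacle I anticipate is precisely this last cancellation issue: non-transversality only says each individual $\scal{v_n}{\eta_j}$ is bounded, and a priori the logarithmic growth at different coordinates could conspire so that these finitely many linear combinations all stay bounded even though $\|v_n\|_\infty\to\infty$. Resolving it cleanly requires comparing the \emph{additive} structure ($s_n\in S$, so $s_n$ lies in a fixed $\dim S$-dimensional lattice-free subspace) with the \emph{logarithmic} transform, using that on the blow-up coordinates $v_{n,i}=\ln s_{n,i}+o(1)$ while $s_n$ is an honest element of the linear space $S$; the mismatch between ``$s_n\in S$ unbounded'' and ``$\ln(s_n)$ nearly orthogonal to $S$'' is what generates the contradiction, and making that rigorous (likely via a further subsequence along which the ratios $s_{n,i}/s_{n,k}$ converge in $[0,\infty]$ for all $i,k\in I$, so that $v_n$ restricted to $I$ is $(\ln t_n) e_I + O(1)$ for a scalar $t_n\to\infty$) is the heart of the proof.
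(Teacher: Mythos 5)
Your overall strategy (use non-transversality to bound the projection of $\ln(x_n)$ onto $S$, then clash this with properness) is reasonable, but as written the proof has two genuine gaps. First, your claim that the set $I=\{i\,:\,x_{n,i}\to\infty\}$ is nonempty does not follow from the case analysis: the defining hypothesis is $\|\ln(x_n)\|_\infty\to\infty$, and this can be driven entirely by coordinates with $x_{n,i}\to 0$ (so $\ln(x_{n,i})\to-\infty$) while every coordinate of $x_n$, and hence $s_n=x_n-x_0$, stays bounded. Your contradiction argument only operates in the blow-up case, so the vanishing-only case is simply not treated; the paper's proof handles it explicitly (it is the case $\scal{w}{x_n}\to 0$ with $\scal{w}{x_n}<0$ for all $n$). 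Second, the step you yourself flag as the heart of the matter --- ruling out cancellations among the logarithmic growth rates of different coordinates, so that all pairings $\scal{\ln(x_n)}{y'-y}$ could stay bounded --- is not actually carried out, and the sketched fix (pass to a subsequence along which the ratios $s_{n,i}/s_{n,k}$ converge, so that $\ln(x_n)$ restricted to $I$ is $(\ln t_n)e_I+O(1)$) fails whenever different coordinates blow up at different rates (e.g.\ $n$ versus $n^2$), which produces several logarithmic scales rather than a single one. Controlling exactly this multi-scale structure is nontrivial; it is what the paper delegates to Theorem 3.9 of \cite{A:boundedness}, in the same spirit as Lemma~\ref{lem:logarithm_decomposition}.

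Note also that the paper's proof sidesteps the cancellation problem by working on the linear scale rather than the log scale: it invokes a vector $w$ that is positive exactly on the coordinates with $x_{n,i}\to\infty$, negative exactly on those with $x_{n,i}\to 0$, zero otherwise, and orthogonal to $y'-y$ whenever $y\te y'$. Non-transversality then forces $w\in S^\perp$, properness forces $\scal{w}{x_n}$ to be constant in $n$, while the sign structure of $w$ forces $\scal{w}{x_n}$ to converge to $\infty$ or to $0$ without ever equalling that limit --- a contradiction. To complete your log-scale route you would essentially have to reprove such a decomposition/separation result, so the missing linear-algebra core is not a routine detail but the substance of the lemma.
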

\begin{proof}
 Consider a proper tier sequence $(x_n)_{n=0}^\infty$. By definition, 
 $$\lim_{n\to\infty}\|\ln(x_n)\|_\infty=\infty$$
 and
 $$\lim_{n\to\infty}|\ln(x_n^{y'-y})|$$
 exists for any $y\to y'\in\R$. After potentially considering a subsequence, we may assume that for any $n\geq0$ 
 \begin{align*}
  x_{n+1,i}\geq x_{n,i}&\quad\text{if }\limsup_{n\to\infty}\ln(x_{n,i})=\infty;\\
  x_{n+1,i}\leq x_{n,i}&\quad\text{if }\liminf_{n\to\infty}\ln(x_{n,i})=-\infty,
 \end{align*}
 which implies that the above $\limsup$ and $\liminf$ are limits. It also follows that
$$\lim_{n\to\infty}|\ln(x_{n,i})|=\infty$$
 for at least one index $1\leq i\leq d$. Hence, by \cite[Theorem 3.9]{A:boundedness} there exists a vector $w\in\RR^d$ such that
 \begin{align*}
  w_i>0&\quad\text{if and only if } \lim_{n\to\infty}\ln(x_{n,i})=\infty;\\
  w_i<0&\quad\text{if and only if } \lim_{n\to\infty}\ln(x_{n,i})=-\infty;\\
  \scal{w}{y'-y}=0&\quad\text{if } y\te y'.
 \end{align*}
\reply{Note that $w_i = 0$ if $\limsup_{n\to \infty} |\ln(x_{n,i})| < \infty$.} In particular, it follows that 
 $$\lim_{n\to\infty} \scal{w}{x_n}=\begin{cases}
                                   \infty&\text{if }\lim_{n\to\infty} \|x_n\|_\infty=\infty;\\
                                   0&\text{otherwise}
                                  \end{cases}.$$
We will show that there must be an $\hat n \ge 1$ for which $\scal{w}{x_{\hat n}} \neq 0$.  First, if $\lim_{n\to \infty} \scal{w}{x_n} = \infty$, the assertion is clear.  If, on the other hand, $\lim_{n\to \infty} \scal{w}{x_n} = 0$, then none of the $x_{n,i}$ converge to infinity.  Since all the vectors $\{x_n\}_{n=0}^\infty$ are positive, and at least one of $x_{n,i}$ converges to zero, we may conclude that $\scal{w}{x_{n}}< 0$ for all $n$.

 If $(x_n)_{n=0}^\infty$ were not transversal, then we would have
 $$\lim_{n\to\infty}|\ln(x_n^{y'-y})|<\infty$$
 for any reaction $y\to y'\in\R$, which would imply that $y\te y'$ for any $y\to y'\in\R$. It would follow that $\scal{w}{y'-y}=0$ for any $y\to y'\in\R$, which means $w\in S^\perp$. Let $\hat n\geq 1$ be such that $\scal{w}{x_{\hat n}}\neq 0$. Since $(x_n)_{n=0}^\infty$ is proper, we have
 $$\lim_{n\to\infty} \scal{w}{x_n}=\scal{w}{x_{\hat n}}+\lim_{n\to\infty} \scal{w}{x_n-x_{\hat n}}=\scal{w}{x_{\hat n}}\notin\{0,\infty\}.$$
 This is a contradiction, and the proof is concluded.
\end{proof}

 For notational convenience, we give the following definition.

\begin{definition}
 Define $\C^S\subseteq\C$ to be the set of source complexes. Given a tier sequence $(x_n)_{n=0}^\infty$, we define \emph{source tier 1} to be the set
 $$T^{1,S}_{(x_n)}=\{y\in\C^S\,:\, y'\tle y\text{ for all }y'\in\C^S\}.$$
\end{definition}

 The following is a key concept of this paper, and will provide a characterization of strongly endotactic networks.

\begin{definition}\label{def:tier_descending}
 We say that a tier sequence $(x_n)_{n=0}^\infty$ is \emph{tier descending} if both the following statements hold:
 \begin{enumerate}
  \item for all $y\in T^{1,S}_{(x_n)}$ and all $y\to y'\in \R$ we have $y'\tle y$;
  \item there exist $y\in T^{1,S}_{(x_n)}$ and $y\to y'\in \R$ with $y'\tl y$. 
 \end{enumerate}
 Moreover, we say that a reaction network $\G$ is \emph{tier descending} if  all transversal tier sequences are tier descending.
\end{definition}

\subsection{Relation between strongly endotactic networks and its tiers}
\label{sec:SE_tiers}

We now state our first main result, which provides a characterization of strongly endotactic networks in terms of  tiers.

\begin{theorem}\label{thm:SE_tiers}
 A reaction network is strongly endotactic if and only if it is tier descending.
\end{theorem}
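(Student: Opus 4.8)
The plan is to reduce the whole statement to a single ``realization'' fact relating tier sequences to vectors $w$ that are not orthogonal to $S$, after which both implications become a direct comparison of Definition~\ref{def:strongly_endotactic} with the definition of tier descending. The fact I would establish first, call it the \emph{Realization Lemma}, is: for every tier sequence $(x_n)_{n=0}^\infty$ there exists $w\in\RR^d$ such that, for all $y,y'\in\C$, $y\tl y'\iff\scal w{y'-y}>0$ and $y\te y'\iff\scal w{y'-y}=0$; consequently $y'\tle y\iff\scal w{y'-y}\le 0$, and $T^{1,S}_{(x_n)}$ equals the set of $w$-maximal complexes.

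To prove the Realization Lemma I would split $V=\{y'-y:y,y'\in\C\}$ using the limits guaranteed by the tier-sequence hypothesis: $V_+=\{v\in V:\lim_n x_n^v=\infty\}$, $V_0=\{v\in V:\lim_n x_n^v\in(0,\infty)\}$, the remaining set being $-V_+$ (note $V_0=-V_0$ and $V_+\cap(-V_+)=\emptyset$). It is enough to find $w$ with $\scal wv>0$ on $V_+$ and $\scal wv=0$ on $V_0$, since each difference vector falls into exactly one of the three classes. Set $W=\spann(V_0)$. Pairing with $\ln(x_n)$ shows no $v\in V_+$ lies in $W$ (elements of $W$ have bounded inner product with $\ln(x_n)$, whereas elements of $V_+$ do not), so the projections $\bar v$ of $v\in V_+$ onto $W^\perp$ are nonzero; moreover $0\notin\mathrm{conv}\{\bar v:v\in V_+\}$, for a relation $0=\sum_v\lambda_v\bar v$ with $\lambda_v\ge 0$, $\sum_v\lambda_v=1$ would force $\sum_v\lambda_v v\in W$ while $\scal{\ln(x_n)}{\sum_v\lambda_v v}\to\infty$. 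A separating hyperplane then yields $w\in W^\perp$ with $\scal w{\bar v}=\scal wv>0$ on $V_+$ and $\scal wv=0$ on $V_0\subseteq W$, as required.

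Granting the lemma, \emph{strongly endotactic $\Rightarrow$ tier descending} follows quickly. Let $(x_n)$ be a transversal tier sequence and take $w$ as in the lemma; transversality gives a reaction $y\to y'$ with $y,y'$ in different tiers, so $\scal w{y'-y}\neq 0$, i.e.\ $w\notin S^\perp$, and Definition~\ref{def:strongly_endotactic} applies to $w$. Part (1) says that whenever $y$ is $w$-maximal (equivalently $y\in T^{1,S}_{(x_n)}$) and $y\to y'\in\R$ then $\scal w{y'-y}\le 0$, that is $y'\tle y$ — condition (1) of tier descending; part (2) provides such a reaction with $\scal w{y'-y}<0$, i.e.\ $y'\tl y$ — condition (2). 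Since $(x_n)$ was arbitrary, $\G$ is tier descending. For the converse I would prove the contrapositive: if $\G$ is not strongly endotactic, choose $w\notin S^\perp$ violating (1) or (2), and set $x_n=e^{nw}$ coordinatewise. Then $(x_n)$ is a tier sequence ($\|\ln x_n\|_\infty=n\|w\|_\infty\to\infty$ because $w\neq 0$, and each $x_n^{y'-y}=e^{n\scal w{y'-y}}$ converges in $[0,\infty]$) that is transversal ($w\notin S^\perp$ forces $w$ to pair nontrivially with some reaction vector), and the equivalences of the Realization Lemma hold verbatim for this very $w$. If (1) fails, there are a $w$-maximal $y$ and $y\to y'$ with $\scal w{y'-y}>0$, i.e.\ $y\tl y'$, so condition (1) of tier descending fails for $(x_n)$; if (1) holds but (2) fails, every reaction out of a $w$-maximal complex has $\scal w{y'-y}\le 0$ and $\scal w{y'-y}\ge 0$, hence $y'\te y$, so condition (2) fails. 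Either way $\G$ is not tier descending.

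The step I expect to be the real obstacle is the Realization Lemma: manufacturing one linear functional that reproduces the entire tier order of $(x_n)$. The difficulty is that the coordinates of $x_n$ can diverge at vastly different rates (for $x_n=(n,e^n)$ with the zero complex available, the naive asymptotic direction $\ln(x_n)/\|\ln(x_n)\|_\infty$ collapses tiers that live at different scales), so one cannot simply normalize $\ln(x_n)$; the separating-hyperplane argument above succeeds precisely because it uses that every scalar limit $\lim_n\scal{\ln(x_n)}{y'-y}$ exists, not just the direction of $\ln(x_n)$. This refines the vector produced by \cite[Theorem~3.9]{A:boundedness} and used in Lemma~\ref{lem:proper_transversal}, which only respects the equivalence $\te$. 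With the lemma in hand, both implications are the verbatim dictionary above.
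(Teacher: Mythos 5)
Your proposal is correct, and the interesting part is that you reach the key vector $w$ by a different route than the paper. The direction ``tier descending $\Rightarrow$ strongly endotactic'' is the same in both treatments: given $w\notin S^\perp$ you test the network against the transversal tier sequence $x_n=e^{nw}$, for which tiers are read off from the sign of $\scal{w}{y'-y}$ (your contrapositive phrasing is only cosmetic). The real content is the converse, and there the paper and you construct the realizing vector differently. The paper first passes to a subsequence and proves the hierarchical decomposition $\ln(x_{n_k})=\sum_{i=1}^{\ell} m^i_{n_k}\alpha_i+C_{n_k}$ of Lemma~\ref{lem:logarithm_decomposition}, then takes $w=\sum_i v_i\alpha_i$ with recursively chosen weights $v_i$ so that the first scale on which a reaction vector is seen dominates all later ones; this yields exactly your equivalences $y'\te y\iff\scal{w}{y'-y}=0$ and $y'\tl y\iff\scal{w}{y'-y}<0$, with $w\notin S^\perp$ from transversality. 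You instead prove the same ``Realization Lemma'' by convex separation: split the difference vectors according to whether $\lim_n x_n^{v}$ is infinite, finite positive, or zero, set $W=\spann(V_0)$, observe that elements of $W$ pair boundedly with $\ln(x_n)$ while elements of $V_+$ do not, conclude that $0$ is not in the convex hull of the projections of $V_+$ onto $W^\perp$, and strictly separate. This is sound (the finiteness of $\C$ makes the hull compact, and the degenerate case $V_+=\emptyset$ cannot occur for a transversal sequence), and it buys a shorter, subsequence-free argument that applies verbatim to every tier sequence, not only transversal ones. What the paper's heavier route buys is Lemma~\ref{lem:logarithm_decomposition} itself, which is reused crucially in the Lyapunov estimates (Lemma~\ref{lem:dominated_reaction} and Proposition~\ref{prop:lyapunov}), so if you adopted your argument for Theorem~\ref{thm:SE_tiers} you would still need that decomposition later; as a proof of the characterization alone, however, your approach is a legitimate and arguably cleaner alternative.
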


Before proceeding with the proof of  Theorem \ref{thm:SE_tiers}, we present an immediate corollary.

\begin{corollary}\label{cor:proper}
 If a reaction network is strongly endotactic, then every proper tier sequence is tier descending. Moreover, if $S=\RR^d$ then a reaction network is strongly endotactic if and only if every proper tier sequence is tier descending.
\end{corollary}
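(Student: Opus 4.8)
The plan is to read off this corollary directly from \thmref{thm:SE_tiers}, whose statement already recasts strong endotacticity as the requirement that \emph{every transversal tier sequence be tier descending} (this is exactly what ``$\G$ is tier descending'' means, by Definition~\ref{def:tier_descending}). The only thing to do is to line up the class of \emph{proper} tier sequences with the class of \emph{transversal} ones in each of the two regimes of interest.

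For the first assertion I would argue as follows. Let $\G$ be strongly endotactic and let $(x_n)_{n=0}^\infty$ be an arbitrary proper tier sequence. By \lemref{lem:proper_transversal}, $(x_n)_{n=0}^\infty$ is transversal. By \thmref{thm:SE_tiers}, $\G$ is tier descending, i.e.\ every transversal tier sequence is tier descending; applying this to $(x_n)_{n=0}^\infty$ gives that $(x_n)_{n=0}^\infty$ is tier descending, as desired. Note that no assumption on $S$ is used.

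For the second assertion, the forward implication is precisely the first assertion, so only the converse needs proof. Assume $S=\RR^d$ and that every proper tier sequence is tier descending. Since $S=\RR^d$, the defining condition ``$x_n-x_m\in S$ for all $n,m$'' of properness is automatically satisfied by every sequence; hence every tier sequence is proper. By hypothesis every tier sequence is then tier descending, and in particular every transversal tier sequence is tier descending, i.e.\ $\G$ is tier descending. By \thmref{thm:SE_tiers}, $\G$ is strongly endotactic.

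I do not expect a genuine obstacle here: all the substantive content sits inside \thmref{thm:SE_tiers} (and \lemref{lem:proper_transversal}), and the corollary is a short bookkeeping argument once those are in hand. The two facts worth isolating are that properness forces transversality (\lemref{lem:proper_transversal}), and that properness is vacuous exactly when $S=\RR^d$, so that in that case the proper tier sequences are \emph{all} tier sequences — which is precisely why the equivalence, rather than merely the one-sided implication, can be stated under the hypothesis $S=\RR^d$.
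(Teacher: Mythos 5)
Your proposal is correct and follows the paper's own proof essentially verbatim: the first claim from Lemma~\ref{lem:proper_transversal} plus Theorem~\ref{thm:SE_tiers}, and the converse under $S=\RR^d$ by noting that properness is then automatic, so transversal tier sequences are proper and Theorem~\ref{thm:SE_tiers} applies. No differences worth noting.
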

\begin{proof}
 The first part of the result follows from Lemma \ref{lem:proper_transversal} and Theorem \ref{thm:SE_tiers}. Moreover, if $S=\RR^d$ then any transversal tier sequence is proper (since all sequences are proper in this case), and the proof follows from Theorem \ref{thm:SE_tiers}.
\end{proof}
\begin{remark}\label{remark:67876}
It is tempting to believe that if every proper tier sequence of a reaction network is tier descending, then the network is strongly endotactic.  By Corollary \ref{cor:proper} we see that this is true in the case when $S = \RR^d$.  However, this statement is false, in general.  
 As an example, consider the reaction network
 $$A\rightleftharpoons 2B, \quad A+C\rightleftharpoons B+C.$$
 The network is not strongly endotactic, as shown in Example \ref{ex:proper_not_transversal}. Nevertheless, every proper tier sequence is tier descending: since no reaction changes the amount of molecules of the species $C$, every proper tier sequence $(x_n)_{n=0}^\infty$ is of the form
 $$x_n=(x_{n,1}, x_{n,2}, c)$$
 for a constant $c\in\RR_{>0}$. It is then easy to check that  $(x_n)_{n=0}^\infty$ is tier descending if and only if $(\hat x_n)_{n=0}^\infty$ defined by
 $$\hat x_n=(x_{n,1}, x_{n,2})$$ 
 is tier descending for
 $$B\rightleftharpoons A\rightleftharpoons 2B.$$
 The latter is strongly endotactic by Proposition \ref{prop:WR_SE}. Hence, each proper tier sequence (such as $(\hat x_n)_{n=0}^\infty$) is tier descending by Corollary \ref{cor:proper}, thus proving our claim. 
\end{remark}

We now proceed by providing a key lemma that will be used in the proof of Theorem \ref{thm:SE_tiers}.  

\begin{lemma}\label{lem:logarithm_decomposition}
 If $(x_n)_{n=0}^\infty$ is a tier sequence, then there exist $\ell\in\ZZ$ with $0<\ell\leq d$, sequences of positive real numbers $(m^1_n)_{n=0}^\infty$, $(m^2_n)_{n=0}^\infty$, $\dots$, $(m^\ell_n)_{n=0}^\infty$, a sequence of real vectors $(C_n)_{n=0}^\infty$, vectors $\alpha_1, \alpha_2, \dots, \alpha_\ell\in\RR^d$ and a subsequence $(x_{n_k})_{k=0}^\infty$ such that:
 \begin{enumerate}
  \item\label{part1} $\ln(x_{n_k})=\sum_{i=1}^\ell m^i_{n_k}\alpha_i + C_{n_k}$;
  \item\label{part2} $\limsup_{k\to\infty}\|C_{n_k}\|_\infty<\infty$;
  \item\label{part3} For all $1\leq i\leq\ell$ we have $\lim_{k \to\infty} m^i_{n_k}=\infty$, and if $1\leq j<i\leq \ell$ then $\lim_{k\to\infty} m^i_{n_k}/m^j_{n_k}=0$;
  \item\label{part4} if $y'\te y$ then $\scal{y'-y}{\alpha_i}=0$ for all $1\leq i\leq \ell$;
  \item\label{part5} if $y'\tl y$ then  
  \begin{equation}\label{eq:99887654}
  i_{y, y'}=\min\{1\leq i\leq \ell\,:\, \scal{\alpha_i}{y'-y}\neq 0\}
  \end{equation}
  exists and $\scal{\alpha_{i_{y, y'}}}{y'-y}<0$.
 \end{enumerate}
\end{lemma}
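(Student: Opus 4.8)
The plan is to build the decomposition iteratively by peeling off the dominant growth direction of $\ln(x_n)$, one scale at a time. First I would normalize: after passing to a subsequence (using compactness of the unit sphere), I may assume that $\ln(x_n)/\|\ln(x_n)\|_\infty$ converges to some unit vector $\alpha_1'$. Set $m^1_n = \|\ln(x_n)\|_\infty$, which tends to $\infty$ since $(x_n)$ is a tier sequence. Then $\ln(x_n) = m^1_n \alpha_1' + r_n^{(1)}$ where $r_n^{(1)} = o(m^1_n)$ coordinatewise. The key point is that $\alpha_1'$ records \emph{exactly} the relative order of the tiers at the coarsest scale: for complexes $y, y'$, the sign of $\lim_n x_n^{y'-y}$ (zero, finite-positive, or $\infty$) is governed first by $\scal{\alpha_1'}{y'-y}$, and only when that inner product vanishes does one need to look at the finer remainder $r_n^{(1)}$. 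I would then recurse on $r_n^{(1)}$: if $\limsup_n \|r_n^{(1)}\|_\infty < \infty$ we stop with $\ell = 1$ and $C_n = r_n^{(1)}$; otherwise set $m^2_n = \|r_n^{(1)}\|_\infty$, extract a convergent subsequence of $r_n^{(1)}/m^2_n \to \alpha_2'$, and continue. Since each new vector $\alpha_{i+1}'$ is a limit of vectors lying in (a neighborhood of) the orthogonal complement of the span of the previous $\alpha_j'$'s — more precisely, each lies in the subspace where all earlier inner products with complex differences vanish — and the complexes live in $\ZZ^d_{\ge 0}$, the process terminates after at most $d$ steps, giving $\ell \le d$. (One should be slightly careful: strictly speaking the $\alpha_i'$ need not be mutually orthogonal as vectors in $\RR^d$, but they are ``orthogonal modulo the complex differences,'' which is what parts \ref{part4} and \ref{part5} require and is enough to bound $\ell$ by $d$ after quotienting by the span of complex differences, or equivalently by noting the decreasing chain of subspaces cut out by the relations $\scal{\alpha_j'}{y'-y}=0$.)

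Next I would verify the listed properties for the resulting data $(\ell, m^i_n, C_n, \alpha_i)$, possibly after one final diagonal subsequence extraction so that all the ratio limits $m^i_{n_k}/m^j_{n_k}$ exist. Part \ref{part1} is the construction itself; part \ref{part2} is the stopping criterion; part \ref{part3} follows because $m^{i+1}_n = \|r_n^{(i)}\|_\infty$ and $r_n^{(i)} = o(m^i_n)$ by construction of $\alpha_i'$ as the direction of the normalized remainder, combined with passing to a subsequence so the ratios converge (to $0$). For part \ref{part4}: if $y \te y'$ then $\lim_n x_n^{y'-y} \in (0,\infty)$, i.e. $\scal{\ln(x_n)}{y'-y}$ stays bounded; feeding in the decomposition $\scal{\ln(x_n)}{y'-y} = \sum_i m^i_n \scal{\alpha_i}{y'-y} + \scal{C_n}{y'-y}$ and using part \ref{part3} (the $m^i_n$ are of strictly separated orders of magnitude, all diverging) forces every coefficient $\scal{\alpha_i}{y'-y}$ to vanish — otherwise the lowest-index nonzero term would dominate and send the sum to $\pm\infty$. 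For part \ref{part5}: if $y \tl y'$ then $\lim_n x_n^{y'-y} = +\infty$, so $\scal{\ln(x_n)}{y'-y} \to +\infty$; again expanding via the decomposition, by part \ref{part4}-type reasoning the sum is dominated by its lowest-index term with nonzero coefficient, which is exactly $i_{y,y'}$ from \eqref{eq:99887654}, and positivity of the limit forces $\scal{\alpha_{i_{y,y'}}}{y'-y} > 0$. Wait — I must be careful with the sign convention: $y \tl y'$ means $x_n^y / x_n^{y'} \to 0$, i.e. $x_n^{y-y'} \to 0$, i.e. $x_n^{y'-y} \to \infty$, so $\scal{\ln(x_n)}{y'-y} \to +\infty$, and the dominant term $m^{i_{y,y'}}_n \scal{\alpha_{i_{y,y'}}}{y'-y}$ must be positive, hence $\scal{\alpha_{i_{y,y'}}}{y'-y} < 0$ would be \emph{wrong}. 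Let me recheck the statement: the lemma says $\scal{\alpha_{i_{y,y'}}}{y'-y} < 0$ when $y' \tl y$, i.e. when $x_n^{y'}/x_n^y \to 0$, i.e. $x_n^{y'-y}\to 0$, i.e. $\scal{\ln(x_n)}{y'-y} \to -\infty$ — and then indeed the dominant term is negative, so $\scal{\alpha_{i_{y,y'}}}{y'-y} < 0$. Good, the convention matches once read correctly.

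The main obstacle I anticipate is the bookkeeping around \textbf{subsequence extraction and the termination/dimension bound}. Each stage of the peeling requires passing to a further subsequence (to get the normalized remainder to converge), and at the end one wants a single subsequence $(x_{n_k})$ working for all stages simultaneously, plus all the pairwise ratio limits $m^i_{n_k}/m^j_{n_k}$ existing — this is handled by a diagonal argument, but needs to be stated cleanly. More delicate is arguing $\ell \le d$: the naive claim ``each $\alpha_i$ is orthogonal to the previous ones'' is not literally what the construction gives unless one orthogonalizes; the honest statement is that the remainder $r_n^{(i)}$ at stage $i$, and hence its limiting direction $\alpha_{i+1}$, is constrained so that $\scal{\alpha_{i+1}}{y'-y} = 0$ whenever $\scal{\alpha_j}{y'-y}=0$ for all $j \le i$ \emph{and} $y,y'$ are in a common tier at the earlier scales — the clean way to get the bound is to observe that the relevant object is the image of $\ln(x_n)$ in a fixed finite-dimensional space and that the chain of subspaces $V_i = \{v : \scal{v}{\alpha_j}=0,\ j \le i\}$ (or rather their intersections with $\mathrm{span}\{y'-y\}$) is strictly decreasing in a suitable sense, which terminates in at most $d$ steps. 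I would present this via the subspace-chain viewpoint rather than via literal orthogonality of the $\alpha_i$, since that is what parts \ref{part4}–\ref{part5} actually need and it sidesteps the Gram–Schmidt distraction.
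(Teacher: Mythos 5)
Your construction---peel off $m^1_n=\|\ln(x_n)\|_\infty$, extract a convergent normalized direction, recurse on the remainder, diagonalize over subsequences, then verify parts 1--3 from the construction and parts 4--5 by the lowest-index-dominant-term argument (with the sign convention you corrected mid-proof)---is essentially the paper's argument. The one genuine gap is the step you yourself flagged: termination of the peeling and the bound $\ell\leq d$. Your proposed justification, a strictly decreasing chain of subspaces cut out by the relations $\scal{\alpha_j}{y'-y}=0$ (equivalently, working modulo $\spann\{y'-y\,:\,y,y'\in\C\}$), does not work. Nothing in the naive peeling forces a newly extracted direction to impose a new relation on complex differences: if the complex differences do not span $\RR^d$, the finer-scale growth can occur in a direction orthogonal to all of them---for the network $S_2\rightleftharpoons S_1+S_2\rightleftharpoons 2S_1+S_2$ and the tier sequence $x_n=(e^n,e^{\sqrt n})$ one gets $\alpha_1=(1,0)$ and then necessarily a second stage with $\alpha_2=(0,1)$ orthogonal to every complex difference, so the chain stagnates while the peeling continues---and if the differences do span $\RR^d$, strict decrease of the chain is equivalent to linear independence of the $\alpha_i$, which is precisely what the naive construction does not provide. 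Your auxiliary claim that $\scal{\alpha_{i+1}}{y'-y}=0$ whenever $\scal{\alpha_j}{y'-y}=0$ for all $j\leq i$ is also false: it is negated exactly when $i_{y,y'}=i+1$ in part \ref{part5}. Consequently your argument yields neither the finiteness of $\ell$ (which parts \ref{part1} and \ref{part2} require: a finite sum plus a bounded remainder) nor $\ell\leq d$; and passing to the quotient cannot substitute for the statement in any case, since part \ref{part2} asserts boundedness of $C_{n_k}$ as a vector of $\RR^d$ and the later proofs (e.g.\ the construction of $\tilde x_{n_k}$ in Lemma \ref{lem:dominated_reaction}) use the actual vectors $\alpha_i\in\RR^d$.

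The paper closes this step with a coordinate argument rather than an orthogonality or quotient argument: at each stage, pass to a further subsequence along which the position (and, by convergence of the normalized remainder, the sign) at which the sup norm of the current remainder is attained is the same for all $k$. Then the limiting direction $\alpha_j$ has entry exactly $\pm1$ at that position, so after subtracting $m^j_{n_k}\alpha_j$ that coordinate of the new remainder is eventually identically zero; moreover $\alpha_j$ vanishes at every coordinate zeroed at an earlier stage (the remainder, hence its normalization, is identically zero there), so those coordinates stay zero. Each stage therefore permanently annihilates at least one new coordinate, the procedure stops after at most $d$ stages, and both the finiteness of $\ell$ and the bound $\ell\leq d$ follow. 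With this replacement for your subspace-chain argument, the rest of your outline goes through as written.
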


\begin{remark}
Parts \ref{part1} and \ref{part2} of the lemma show that the logarithm of a tier sequence can be substantially decomposed into fixed vectors, $\alpha_i$, apart from a bounded error term, $C_{n_k}$.  Part \ref{part3} then shows that if $i< j$, then the influence of the  vector $\alpha_i$ is greater than the influence of the vector $\alpha_j$.  Finally, by parts \ref{part4} and \ref{part5} we see that the $\alpha_i$'s separate complexes in a natural manner among the tiers.

As an example, consider the reaction network
$$A\rightleftharpoons B \rightleftharpoons 2C$$
and the tier sequence 
$$x_n=\left(\frac{1}{n}, 5-\frac{1}{n}-\frac{1}{2\sqrt{n}}, \frac{1}{\sqrt{n}}\right),$$
 introduced in Example \ref{ex:tier_sequences}. We have
 $$\ln(x_{n_k})=\ln(n)\left(-1,0,-\frac{1}{2}\right)+C_n,$$
 where
 $$C_n=\left(0, \ln\left(5-\frac{1}{n}-\frac{1}{2\sqrt{n}}\right),0\right).$$
 Note that $\|C_n\|_\infty<\ln(5)$ for all $n>1$. Moreover, recall that $A\te 2C$ and $A\tl B$, which is implied also by parts \ref{part4} and \ref{part5} of the lemma, since
 $$\left\langle(1,0,-2),\left(-1,0,-\frac{1}{2}\right)\right\rangle=0\quad\text{and}\quad\left\langle(1,-1,0),\left(-1,0,-\frac{1}{2}\right)\right\rangle<0.$$
\end{remark}

\begin{proof}[Proof of Lemma \ref{lem:logarithm_decomposition}]
 Define $m^1_n=\|\ln(x_n)\|_\infty$. Note that for any $n\geq0$ we have $\|\ln(x_n)/m^1_n\|_\infty=1$. Hence, we can consider a subsequence of $(x_n)_{n=0}^\infty$ such that
 \begin{equation}\label{eq:768769}
 \alpha_1=\lim_{k\to\infty}\frac{\ln(x_{n_k})}{m^1_{n_k}}
 \end{equation}
 exists. We further note that $\alpha_1$ cannot be zero since it is the limit of a sequence of points in the ball of radius 1 with respect to $\|\cdot\|_\infty$ in $\RR^d$. 
 
 Since the dimension of the vectors $x_n$ is $d<\infty$, we can further choose a subsequence such that the maximal absolute values of the entries of $\ln(x_{n_k})$ are always obtained in the same position. This implies that at least one entry of $\ln(x_{n_k})$ has absolute value constantly equal to $m^1_{n_k}$.  Moreover, by \eqref{eq:768769} the sign of such entries will stabilize for $k$ large enough.  Hence,  the vectors
 $$\ln(x_{n_k})-m^1_{n_k}\alpha_1$$
 have at least one component constantly equal to zero for $k$ large enough.  
 
 We define $m^i_{n_k}$ and $\alpha_i$ iteratively in the following way: for each $j\geq 2$, if
 $$\limsup_{k\to\infty}\|\ln(x_{n_k})-\sum_{i=1}^{j-1} m^{i}_{n_k}\alpha_i\|_\infty=\infty,$$
 then define $m^j_{n_k}=\|\ln(x_{n_k})-\sum_{i=1}^{j-1} m^{i}_{n_k}\alpha_i\|_\infty$. By potentially considering a subsequence of $(x_{n_k})_{k=0}^\infty$, we can assume that
 $$\alpha_j=\lim_{k\to\infty}\frac{\ln(x_{n_k})-\sum_{i=1}^{j-1} m^{i}_{n_k}\alpha_i}{m^j_{n_k}}$$
 exists. As before, note that $\alpha_j$ cannot be zero. Moreover, we can choose a subsequence such that the maximal absolute values of the entries of $\ln(x_{n_k})-\sum_{i=1}^{j-1} m^{i}_{n_k}\alpha_i$ are always obtained in the same position, so by induction it follows that at least $j-1$ components of 
 $\ln(x_{n_k})-\sum_{i=1}^{j-1} m^{i}_{n_k}\alpha_i$ are equal to zero for $k$ large enough (the argument is the same as for $j=1$, which serves as base case).
 In particular, it follows that there exists a number $\ell\leq d$ such that
 $$\limsup_{k\to\infty}\left\|\ln(x_{n_k})-\sum_{i=1}^\ell m^i_{n_k}\alpha_i\right\|_\infty<\infty.$$
 
 We define
 $$C_n=\ln(x_n)-\sum_{i=1}^\ell m^i_n\alpha_i.$$
 Parts \eqref{part1} and \eqref{part2} trivially hold by the definition of $C_n$. For part \eqref{part3}, note that for all $2\leq j\leq \ell$
 $$\lim_{k\to\infty} \frac{m^j_{n_k}}{m^{j-1}_{n_k}}=\lim_{k\to\infty}\left\|\frac{\ln(x_{n_k})-\sum_{i=1}^{j-1} m^{i}_{n_k}\alpha_i}{m^{j-1}_{n_k}}\right\|_\infty=
 \|\alpha_{j-1}-\alpha_{j-1}\|_\infty=0.
 $$
 
 For part \eqref{part4}, consider $y\te y'$. Then,
 $$0<\lim_{k\to\infty} x_{n_k}^{y'-y}<\infty.$$
 By taking the logarithm, it follows that 
 $$-\infty<\lim_{k\to\infty} \ln(x_{n_k}^{y'-y})<\infty.$$
 Hence, since $m^1_{n_k}$ tends to infinity as $k$ tends to infinity, we have
 $$0=\lim_{k\to\infty} \frac{\ln(x_{n_k}^{y'-y})}{m^1_{n_k}}=\scal{\alpha_1}{y'-y}.$$
 We complete the proof of part \eqref{part4} by induction: consider $1<j\leq\ell$ and assume that the statement holds for any $1\leq i\leq j-1$. Then, by part \eqref{part1} and since $m^j_{n_k}$ tends to infinity as $k$ tends to infinity, we have
 \begin{align*}
  0&=\lim_{k\to\infty} \frac{\ln(x_{n_k}^{y'-y})}{m^j_{n_k}}=\lim_{k\to\infty} \frac{\scal{\sum_{i=1}^\ell m^i_{n_k}\alpha_i + C_{n_k}}{y'-y}}{m^j_{n_k}}\\
  &=\lim_{k\to\infty} \frac{\scal{\sum_{i=j}^\ell m^i_{n_k}\alpha_i + C_{n_k}}{y'-y}}{m^j_{n_k}}=\scal{\alpha_j}{y'-y}.
 \end{align*}
 
 Finally, for part \eqref{part5} consider $y'\tl y$. Then, we have
 $$\lim_{k\to\infty} x_{n_k}^{y'-y}=0,$$
 which implies
 \begin{equation}\label{eq:54322}
 -\infty=\lim_{k\to\infty} \ln(x_{n_k}^{y'-y})=\lim_{k\to\infty}\left(\sum_{i=1}^\ell m^i_{n_k}\scal{\alpha_i}{y'-y} + \scal{C_{n_k}}{y'-y}\right).
 \end{equation}
 Since the values $\|C_{n_k}\|_\infty$ are bounded uniformly in $k$, we have
 $$\lim_{k\to\infty}\sum_{i=1}^\ell m^i_{n_k}\scal{\alpha_i}{y'-y}=-\infty,$$
 which implies that
 \begin{equation*}
 i_{y, y'}=\min\{1\leq i\leq \ell\,:\, \scal{\alpha_i}{y'-y}\neq 0\}
 \end{equation*}
 exists. 
  Moreover, by part \ref{part3} we have
 $$\scal{\alpha_{i_{y, y'}}}{y'-y}=\lim_{k\to\infty} \frac{\ln(x_{n_k}^{y'-y})}{m^{i_{y, y'}}_{n_k}}.$$
 By construction, the term on the left is non-zero.  Further, by \eqref{eq:54322} the right-hand size is non-positive.  Hence, $\scal{\alpha_{i_{y, y'}}}{y'-y} < 0$,
 which concludes the proof.
\end{proof}

Now we are able to prove Theorem \ref{thm:SE_tiers}.

\begin{proof}[Proof of Theorem \ref{thm:SE_tiers}]
 Assume that the network is tier descending. Consider a vector $w$ that is not orthogonal to the stoichiometric subspace $S$. Consider the sequence $(x_n)_{n=0}^\infty$ defined by
 $$x_n=e^{nw}.$$
 We have
 $$\lim_{n\to\infty}\|\ln(x_n)\|_\infty=\lim_{n\to\infty}n\|w\|_\infty=\infty$$
 and for any two complexes $y, y'\in\C$
 \begin{equation}\label{eq:lim_log}
\lim_{n\to\infty}\ln(x_n^{y'-y})=\lim_{n\to\infty}n\scal{w}{y'-y}=\begin{cases}
                                                                             -\infty&\text{if }\scal{w}{y'-y}<0\\
                                                                             0&\text{if }\scal{w}{y'-y}=0\\
                                                                             \infty&\text{if }\scal{w}{y'-y}>0\\
                                                                            \end{cases}  .
 \end{equation}
 Hence, $(x_n)_{n=0}^\infty$ is a tier sequence. Moreover, it is transversal: since $w$ is not orthogonal to $S$, there exists a reaction $y\to y'$ with $\scal{w}{y'-y}\neq0$, which implies $\lim_{n\to\infty}|\ln(x_n^{y'-y})|=\infty$. It follows that $(x_n)_{n=0}^\infty$ is tier descending, which together with equation \ref{eq:lim_log} concludes the proof of one direction of the result.

 For the other direction, we suppose that the network is strongly endotactic.  Let $(x_n)_{n=0}^\infty$ be a transversal tier sequence. In order to prove the result, it is sufficient to construct a vector $w$ such that
 \begin{enumerate}
  \item $w\notin S^\perp$;
  \item $\scal{w}{y'-y}=0$ if and only if $y'\te y$, and $\scal{w}{y'-y}<0$ if and only if $y'\tl y$.
 \end{enumerate}
 Indeed, if such a vector is constructed, then it follows that the set of $w-$maximal complexes coincides with $y\in T^{1,S}_{(x_n)}$, and by Definition \ref{def:strongly_endotactic} the sequence $(x_n)_{n=0}^\infty$ is tier descending. 
 
 Consider a subsequence $(x_{n_k})_{k=0}^\infty$ as in Lemma \ref{lem:logarithm_decomposition}, such that there exist $\ell\in\ZZ$ with $0<\ell\leq d$, sequences of positive real numbers $(m^1_{n_k})_{k=0}^\infty$, $(m^2_{n_k})_{k=0}^\infty$, $\dots$, $(m^\ell_{n_k})_{k=0}^\infty$, $(C_{n_k})_{k=0}^\infty$,  and vectors $\alpha_1, \alpha_2, \dots, \alpha_\ell\in\RR^d$ such that
 $$\ln(x_{n_k})=\sum_{i=1}^\ell m^i_{n_k}\alpha_i + C_{n_k}.$$
 Note that $(x_{n_k})_{k=0}^\infty$ is still a transversal tier sequence, and the tier structures of $(x_n)_{n=0}^\infty$ and of its subsequence $(x_{n_k})_{k=0}^\infty$ are identical, meaning that for any $i\geq 1$ we have $T^i_{(x_n)}=T^i_{(x_{n_k})}$.  Let
 $$w=\sum_{i=1}^\ell v_i\alpha_i,$$
 with the positive constants $v_i$ defined recursively as follows: $v_\ell=1$ and
 $$v_i=1+\max_{\substack{y\to y'\in\R\\ \scal{\alpha_i}{y'-y}\neq 0}}\left|\frac{\sum_{j=i+1}^\ell v_j\scal{\alpha_j}{y'-y}}{\scal{\alpha_i}{y'-y}}\right|\quad\text{for }1\leq i\leq \ell-1.$$
 We have the following:
 \begin{enumerate}
  \item Since $(x_{n_k})_{k=0}^\infty$ is transversal and since $\|C_{n_k}\|_\infty$ are bounded, there must exist a reaction $y\to y'$ and a vector $\alpha_i$ such that $\scal{\alpha_i}{y'-y}\neq 0$. Let
  $$\hat{\imath}=\min_{1\leq i\leq \ell\,:\, \scal{\alpha_i}{y'-y}\neq 0}.$$
  By definition of the constants $v_i$, we have
  $$|v_{\hat{\imath}}\scal{\alpha_{\hat{\imath}}}{y'-y}|>\left|\sum_{j=\hat{\imath}+1}^\ell v_j\scal{\alpha_j}{y'-y}\right|,$$
  hence
  $$\scal{w}{y'-y}=\sum_{j=\hat{\imath}}^\ell v_j\scal{\alpha_j}{y'-y}\neq0,$$
  which is equivalent to say that $w\notin S^\perp$.
  \item By Lemma \ref{lem:logarithm_decomposition}\eqref{part4}\eqref{part5}, $y'\te y$ if and only if $\scal{\alpha_i}{y'-y}=0$ for all $1\leq i\leq \ell$. By the definition of $w$   the latter is in turn equivalent to $\scal{w}{y'-y}=0$. Moreover, $y'\tl y$ if and only if $\scal{\alpha_{i_{y,y'}}}{y'-y}<0$, where $i_{y,y'}$ is defined in \eqref{eq:99887654},  
 which by definition of the constants $v_i$ is equivalent to
  $$\scal{w}{y'-y}=\sum_{j=i_{y,y'}}^\ell v_j\scal{\alpha_j}{y'-y}<0.$$

 \end{enumerate}
 The proof is then concluded. 
\end{proof}

\subsection{Tier sequences and Lyapunov functions}\label{sec:lyapunov}

Let $u(x) : \mathbb{R} \to \mathbb{R}_{\ge 0}$ be the function
\begin{align}\label{eq:genelized lyapunov}
u(x)= \begin{cases}
x(\ln{x}-1)+1  \quad &\text{if} \quad x>0,\\
1 \quad &\text{otherwise}.
\end{cases}
\end{align}
Then we define
\begin{equation}\label{eq:lyapunov}
 U(x)=1 + \sum_{i=1}^d u(x_i).
\end{equation}
This function has been utilized often as a Lyapunov function in the context of reaction network theory.  In particular, it was utilized in the foundational papers of the field in order  prove local asympotic stability of complex balanced deterministic mass action systems \cite{F1,H}.  Moreover,  it (or slight modifications thereof) has notably been used to derive the results of \cite{A:single, A:boundedness, GMS:geometric, ADE:geometric}, which are of direct interest for the present paper. More discussion on the role of Lyapunov functions for stochastic reaction networks can be found in \cite{ACGW2015} and \cite{AK2017}. 

In the present section, we will unveil some important connections between tier sequences and the Lyapunov function \eqref{eq:lyapunov} by extending the techniques of \cite{A:single} to the  setting of tier descending networks. We will then  use these connections to develop the results presented in sections \ref{sec:permanence}, \ref{sec:LDP}, and \ref{sec:PR_SE}.

\begin{lemma}\label{lem:dominated_reaction}
 Consider a tier descending reaction network $\G$ and let $(x_n)_{n=0}^\infty$ be a transversal tier sequence. Then, for any $y\to y'\in \R$ with $y\tle y'$ there exists $y^\star\in \C$ and $y^\star\to y^{\star \star}\in\R$ such that $y\tle y^\star$, $y^{\star \star}\tl y^\star$ and
  for any choice of  $c_1,c_2 \in \RR_{>0}$ and $c_3,c_4 \in \RR$ there exists $N<\infty$ with
 \begin{equation}\label{eq:dominated_reaction_negative}
  c_1x_n^{y^\star}\left(\ln(x_n^{y^{\star\star}- y^\star}) + c_3\right)+c_2x_n^{ y}\left(\ln(x_n^{y'- y})+c_4\right) < 0\quad \text{for all }n\geq N.
 \end{equation}
 Moreover, \reply{if there exists a $c\in \RR_{>0}$ for which} $x_n^{y^\star}\geq c>0$ for all $n$, then for any choice of  $c_1,c_2 \in \RR_{>0}$ and $c_3,c_4 \in \RR$ we have
 \begin{equation}\label{eq:dominated_reaction_infinity}
  \lim_{n\to\infty}\left(c_1x_n^{y^\star}\left(\ln(x_n^{y^{\star\star}- y^\star}) + c_3\right)+c_2x_n^{ y}\left(\ln(x_n^{y'- y})+c_4\right)\right)=-\infty.
 \end{equation}
\end{lemma}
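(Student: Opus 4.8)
The plan is to exhibit, once and for all, a single strictly descending reaction out of the top source tier and show it dominates the given reaction $y\to y'$ no matter what $c_1,\dots ,c_4$ are. By \thmref{thm:SE_tiers} the network $\G$ is strongly endotactic; moreover, since $\G$ is tier descending and $(x_n)_{n=0}^\infty$ is a transversal tier sequence, $(x_n)_{n=0}^\infty$ is itself tier descending, so condition~(2) of Definition~\ref{def:tier_descending} provides a complex $y^\star\in T^{1,S}_{(x_n)}$ and a reaction $y^\star\to y^{\star\star}\in\R$ with $y^{\star\star}\tl y^\star$. I fix this pair. Because $y$ is a source complex and $y^\star\in T^{1,S}_{(x_n)}$, one automatically has $y\tle y^\star$. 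Everything then reduces to checking \eqref{eq:dominated_reaction_negative}, and \eqref{eq:dominated_reaction_infinity} under the extra hypothesis, for this choice.

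The case $y\te y'$ is easy: there $\ln(x_n^{y'-y})$ has a finite limit, so (bounding $x_n^y$ by a fixed multiple of $x_n^{y^\star}$ via $y\tle y^\star$) one has $c_2 x_n^{y}(\ln(x_n^{y'-y})+c_4)\le C x_n^{y^\star}$ for a constant $C$ and all large $n$, while $\ln(x_n^{y^{\star\star}-y^\star})+c_3\to-\infty$; hence the left side of \eqref{eq:dominated_reaction_negative} is at most $x_n^{y^\star}(c_1(\ln(x_n^{y^{\star\star}-y^\star})+c_3)+C)$, whose bracket tends to $-\infty$, which settles \eqref{eq:dominated_reaction_negative} and, when $x_n^{y^\star}\ge c>0$, also \eqref{eq:dominated_reaction_infinity}. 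I would also record here that the subcase "$y\tl y'$ and $y\te y^\star$" cannot occur: $y\te y^\star\in T^{1,S}_{(x_n)}$ forces $y\in T^{1,S}_{(x_n)}$, and then condition~(1) of Definition~\ref{def:tier_descending} gives $y'\tle y$, contradicting $y\tl y'$. Thus in the remaining case $y\tl y'$ we may assume $y\tl y^\star$.

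For the main case $y\tl y'$ (hence $y\tl y^\star$), write $P_n=-c_1 x_n^{y^\star}(\ln(x_n^{y^{\star\star}-y^\star})+c_3)$ and $Q_n=c_2 x_n^{y}(\ln(x_n^{y'-y})+c_4)$, both eventually positive. It suffices to prove $P_n/Q_n\to\infty$: this gives \eqref{eq:dominated_reaction_negative} at once, and if $x_n^{y^\star}\ge c>0$ then $P_n\ge c\,c_1|\ln(x_n^{y^{\star\star}-y^\star})+c_3|\to\infty$, so $P_n-Q_n\to\infty$ and \eqref{eq:dominated_reaction_infinity} follows. Divergence to $\infty$ can be checked along subsequences, so I pass to a subsequence and invoke \lemref{lem:logarithm_decomposition} to write $\ln(x_n)=\sum_{i=1}^\ell m^i_n\alpha_i+C_n$ with the stated properties (the relations $y\tl y'$, $y\tl y^\star$, $y^{\star\star}\tl y^\star$ persist on subsequences). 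Using parts~4--5 of that lemma, set $i_2=\min\{i:\scal{\alpha_i}{y^\star-y}\neq0\}$, $i_3=\min\{i:\scal{\alpha_i}{y'-y}\neq0\}$, $i_1=\min\{i:\scal{\alpha_i}{y^{\star\star}-y^\star}\neq0\}$, and $A_2=\scal{\alpha_{i_2}}{y^\star-y}>0$, $A_3=\scal{\alpha_{i_3}}{y'-y}>0$, $A_1=-\scal{\alpha_{i_1}}{y^{\star\star}-y^\star}>0$. Substituting the decomposition into the logarithms and using part~3 ($m^1_n\gg m^2_n\gg\cdots\gg m^\ell_n\to\infty$) yields $x_n^{y^\star-y}=e^{m^{i_2}_n(A_2+o(1))}$, $|\ln(x_n^{y^{\star\star}-y^\star})+c_3|=m^{i_1}_n(A_1+o(1))$ and $|\ln(x_n^{y'-y})+c_4|=m^{i_3}_n(A_3+o(1))$, so $P_n/Q_n$ equals, up to a positive constant, $e^{m^{i_2}_n(A_2+o(1))}\,m^{i_1}_n/m^{i_3}_n$.

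The crux, and the step I expect to be the main obstacle, is the inequality $i_2\le i_3$, i.e.\ that the top source $y^\star$ cannot rise above $y$ at a strictly finer logarithmic scale than $y'$ does; this is exactly where strong endotacticity is invoked. Suppose $i_2>i_3$. Then $\scal{\alpha_i}{y^\star-y}=0$ for all $i\le i_3$, so $y$ and $y^\star$ agree on $\alpha_1,\dots,\alpha_{i_3}$. Since $y^\star\in T^{1,S}_{(x_n)}$, parts~4--5 of \lemref{lem:logarithm_decomposition} show $y^\star$ is lexicographically maximal among source complexes with respect to $(\alpha_1,\dots,\alpha_\ell)$, hence $y$ is lexicographically maximal among sources with respect to the truncation $(\alpha_1,\dots,\alpha_{i_3})$. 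For $\epsilon>0$ small enough the vector $w_\epsilon=\sum_{j=1}^{i_3}\epsilon^{j-1}\alpha_j$ then has $y$ among the sources maximizing $\scal{w_\epsilon}{\cdot}$, so $y$ is $w_\epsilon$-maximal; moreover $\scal{w_\epsilon}{y'-y}=\epsilon^{i_3-1}A_3>0$ because $\scal{\alpha_i}{y'-y}=0$ for $i<i_3$, and this in particular shows $w_\epsilon\notin S^\perp$. But a $w_\epsilon$-maximal source with an outgoing reaction $y\to y'$ satisfying $\scal{w_\epsilon}{y'-y}>0$ contradicts condition~(1) of Definition~\ref{def:strongly_endotactic}. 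Hence $i_2\le i_3$, so $m^{i_2}_n\ge m^{i_3}_n$ and therefore $m^{i_1}_n/m^{i_3}_n\ge m^{i_1}_n/m^{i_2}_n$; consequently $P_n/Q_n$ is, up to a positive constant, at least $(e^{A_2 m^{i_2}_n/2}/m^{i_2}_n)\cdot m^{i_1}_n$, which tends to $\infty$ since both factors do. This completes the main case and the lemma.
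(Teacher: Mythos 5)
Your proof is correct, and for the main case it takes a genuinely different route from the paper's. For $y\te y'$ you argue exactly as the paper does. For $y\tl y'$, the paper argues by contradiction: it builds the truncated auxiliary sequence $\ln(\tilde x_{n_k})=\sum_{i\le i_{y',y}}m^i_{n_k}\alpha_i$, shows it is a transversal (hence tier descending) tier sequence, and extracts from \emph{its} source tier 1 a pair $(y^\star,y^{\star\star})$ — not necessarily with $y^\star\in T^{1,S}_{(x_n)}$ — for which the truncation automatically forces both separation indices $i_{y^\star,y}$ and $i_{y^{\star\star},y^\star}$ to be at most $i_{y',y}$, after which the ratio estimate is immediate. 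You instead fix once and for all a descending reaction $y^\star\to y^{\star\star}$ out of the original $T^{1,S}_{(x_n)}$ (correctly ruling out $y\te y^\star$ via condition (1) of Definition~\ref{def:tier_descending}), and supply the missing scale comparison $i_2\le i_3$ by an $\epsilon$-weighted sweep vector $w_\epsilon=\sum_{j\le i_3}\epsilon^{j-1}\alpha_j$ together with condition (1) of Definition~\ref{def:strongly_endotactic}, strong endotacticity being available through Theorem~\ref{thm:SE_tiers} (no circularity, since that theorem is proved before this lemma and independently of it). Your accounting of the resulting ratio is also right: even when $i_1>i_3$, so that $m^{i_1}_n/m^{i_3}_n\to 0$, the factor $x_n^{y^\star-y}=e^{m^{i_2}_n(A_2+o(1))}$ with $i_2\le i_3$ dominates, and the passage to subsequences is legitimate because divergence of $P_n/Q_n$ can be verified subsequentially while $(y^\star,y^{\star\star})$ stays fixed. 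What your approach buys is a single pair $(y^\star,y^{\star\star})$, taken from the top source tier of the original sequence, that works uniformly for every dominated reaction and every choice of constants, and it avoids introducing the auxiliary sequence; the cost is that you re-derive inline a piece of the geometric argument behind Theorem~\ref{thm:SE_tiers} (the sweep vector), whereas the paper stays entirely within the tier formalism and lets the truncated sequence do that work, at the price of a contradiction setup and a $y^\star$ that may depend on $y\to y'$.
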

\begin{proof}
 Fix $y\to y'\in \R$. We consider two cases separately: $y \te y'$ and $y\tl y'$.
 
 \vspace{.2in}
 
 \noindent \textbf{Case 1.}  Assume that $y\te y'$. Then
 $$\lim_{n\to\infty}|\ln(x_{n}^{y'-y})|<\infty.$$
 By the definition of a descending reaction network there must be at least one reaction $y^\star\to y^{\star \star}$ with $y^\star\in T^{1,S}_{(x_n)}$ (implying $y\tle y^\star$) and $y^{\star \star}\tl y^\star$. Hence, we have
 $$\lim_{n\to\infty} x_n^{y-y^\star}<\infty$$
 and
 \begin{equation*}
 \lim_{n\to\infty} \ln(x_n^{y^{\star\star}-y^\star})=-\infty.
 \end{equation*}
 It follows that
 \begin{align*}
 c_1x_n^{y^\star}\big(\ln(x_n^{y^{\star\star}- y^\star})& + c_3\big)+c_2x_n^{ y}\big(\ln(x_n^{y'- y})+c_4\big)\\
 &=x_n^{y^\star}\left(c_1\left(\ln(x_n^{y^{\star\star}- y^\star}) + c_3\right)+c_2x_n^{ y-y^\star}\left(\ln(x_n^{y'- y})+c_4\right)\right)
 \end{align*}
 is negative for $n$ large enough, which proves \eqref{eq:dominated_reaction_negative}. Moreover, if $x_n^{y^\star}\geq c>0$, then   \eqref{eq:dominated_reaction_infinity} follows.

\vspace{.2in}

\noindent \textbf{Case 2.} \reply{We prove the result by contradiction.} Assume that  $y\tl y'$. If \eqref{eq:dominated_reaction_negative} did not hold, then there would exist a subsequence $(x_{n_k})_{k=0}^\infty$ such that for any $y^\star\to y^{\star \star}\in\R$ with $y\tle y^\star$ and $y^{\star \star}\tl y^\star$, there exist  $c_1,c_2 \in \RR_{>0}$ and $c_3,c_4 \in \RR$ with
 \begin{equation}\label{eq:not_true}
  c_1x_n^{y^\star}\left(\ln(x_n^{y^{\star\star}- y^\star}) + c_3\right)+c_2x_n^{ y}\left(\ln(x_n^{y'- y})+c_4\right)\geq 0\quad\text{for all }k\in\Z_{\geq0}.
 \end{equation}
 Our aim is to prove that such a subsequence does not exist.
 
 Every subsequence of a descending tier sequence is still a descending tier sequence. Hence, by potentially considering a further subsequence, we can assume that $(x_{n_k})_{k=0}^\infty$ is as in Lemma \ref{lem:logarithm_decomposition}.

 Consider the sequence $(\tilde{x}_{n_k})_{k=0}^\infty$ defined by
 \begin{equation}\label{eq:000098765678}
 \ln(\tilde{x}_{n_k})= \sum_{i=1}^{i_{y',y}}m^i_{n_k}\alpha_i
 \end{equation}
 where $i_{y',y}$ is as defined in \eqref{eq:99887654}, and exists by Lemma \ref{lem:logarithm_decomposition}\eqref{part5}.  We will first show that 
 $(\tilde{x}_{n_k})_{k=0}^\infty$ is also a transversal tier sequence, and is therefore tier descending. \reply{We then prove that there exist $y^\star\in \C$ and $y^\star\to y^{\star \star}\in\R$ such that $y\tlwrt{\tilde x_{n_k}} y^\star$, $y^{\star \star}\tlwrt{\tilde x_{n_k}} y^\star$. Finally, we will prove that these complex orderings are valid also when considerering the original tier sequence $(x_n)_{n=0}^\infty$, as required by the lemma.}
 
 By Lemma \ref{lem:logarithm_decomposition}\eqref{part3}, we have
 \begin{equation*}
 \lim_{k\to \infty} \frac{\left\|\ln(\tilde{x}_{n_k})\right\|_\infty}{m_{n_k}^{1} \left\|\alpha_{1}\right\|_\infty} = 1,
 \end{equation*}
 and so
  $\lim_{k\to\infty}\|\ln(\tilde{x}_{n_k})\|_\infty=\infty$. Furthermore,  for any two complexes $\tilde{y}, \tilde{y}'\in \C$ the limit
 $$\lim_{k\to \infty} \tilde{x}_{n_k}^{\tilde{y}'-\tilde{y}}=\lim_{k\to \infty} e^{\sum_{i=1}^{i_{y',y}}m^i_{n_k}\scal{\alpha_i}{\tilde{y}'-\tilde{y}}}$$
 exists (it can potentially be infinity). Hence, $(\tilde x_{n_k})_{k=0}^\infty$ is a tier sequence. Moreover,
 $$\lim_{k\to \infty} |\ln(\tilde{x}_{n_k}^{y'-y})|=\lim_{k\to \infty} \left |  \sum_{i = 1}^{i_{y',y}} m_{n_k}^i \scal{\alpha_i}{y'-y} \right| = \lim_{k\to \infty} m^{i_{y',y}}_{n_k}|\scal{\alpha_{i_{y',y}}}{y'-y}|=\infty.$$
 Hence, $(\tilde{x}_{n_k})_{k=0}^\infty$ is a transversal tier sequence.  Combining this with the fact that $\G$ is a tier descending reaction network, we may conclude that  $(\tilde{x}_{n_k})_{k=0}^\infty$ is tier descending. 
 Since $(\tilde{x}_{n_k})_{k=0}^\infty$ is a tier sequence, Lemma  \ref{lem:logarithm_decomposition} guarantees that it can be decomposed as detailed therein.  It is straightforward to prove that the vectors and coefficients as  constructed in the proof of the lemma coincide with the $m_{n_k}^i$ and $\alpha_i$  in \eqref{eq:000098765678}, for $1 \le i \le i_{y',y}$.

 By Lemma \ref{lem:logarithm_decomposition}\eqref{part3}\eqref{part5} we have
 $$\lim_{k\to \infty} \ln(\tilde{x}_{n_k}^{y'-y})=\lim_{k\to \infty} m^{i_{y',y}}_{n_k}\scal{\alpha_{i_{y',y}}}{y'-y}=-\infty,$$
 allowing us to conclude that $\lim_{k\to \infty} \tilde{x}_{n_k}^{y'-y}=0$.  Thus,  $y\tlwrt{\tilde x_{n_k}} y'$. Since $(\tilde{x}_{n_k})_{k=0}^\infty$ is tier descending, $y$ cannot be in $T^{1,S}_{(\tilde x_{n_k})}$.   Hence, there must exist a complex $y^\star$ with $y \tlwrt{\tilde x_{n_k}} y^\star$ and a reaction $y^\star\to y^{\star \star}\in \R$ with $y^{\star \star}\tlwrt{\tilde x_{n_k}} y^\star$. Combining $y \tlwrt{\tilde x_{n_k}} y^\star$ with  Lemma \ref{lem:logarithm_decomposition}\eqref{part5}, it follows that $i_{y^\star,y} \leq i_{y',y}$.  Hence, by Lemma \ref{lem:logarithm_decomposition}\eqref{part3} we  may conclude
 $$\lim_{k\to\infty}\ln(\tilde{x}_{n_k}^{{y}-y^\star})=\lim_{k\to\infty}\ln(x_{n_k}^{{y}-y^\star}),$$
 as they are both asymptotically equivalent to the same term.  Therefore, the latter is negative infinity and $y\tlwrt{x_{n_k}} y^\star$.
 
 Similarly as above, since  $y^{\star \star} \tlwrt{\tilde x_{n_k}} y^\star$ we may conclude that $i_{y^{\star \star}, y^\star} \leq i_{y',y}$ and  $y^{\star \star} \tlwrt{x_{n_k}} y^\star$.

 \reply{Now we prove \eqref{eq:dominated_reaction_negative}.} Combining $x_{n_k}^{y^\star}>0$ and $y^{\star \star} \tlwrt{x_{n_k}} y^\star$ we know that for $k$ large enough
 \begin{equation}\label{eq:negative_contribution}
  x_{n_k}^{y^\star}\left(\ln(x_{n_k}^{y^{\star \star}-y^\star})+c_3\right)< 0.
 \end{equation}
 Moreover, combining $y \tlwrt{x_{n_k}} y^\star$,  $i_{y^\star, y^{\star \star}} \leq i_{y',y}$, and Lemma \ref{lem:logarithm_decomposition}\eqref{part3}\eqref{part5} we have
 \begin{equation}\label{eq:stronger}
  \lim_{k\to\infty}\frac{x_{n_k}^{y^\star}\left(\ln(x_{n_k}^{y^{\star \star}-y^\star})+c_3\right)}{x_{n_k}^{y}\left(\ln(x_{n_k}^{y'-y})+c_4\right)}=\lim_{k\to\infty}x_{n_k}^{y^\star-y}\frac{m^{i_{y^{\star\star},y^\star}}_{n_k}\scal{\alpha_{i_{y^{\star\star},y^\star}}}{y^{\star \star}-y^\star}}{m^{i_{y',y}}_{n_k}\scal{\alpha_{i_{y',y}}}{y'-y}}=-\infty,
 \end{equation}
 where we use that $\scal{\alpha_{i_{y^{\star\star},y^\star}}}{y^{\star \star} - y^\star}<0$ and $\scal{\alpha_{i_{y',y}}}{y'-y}>0$.
 By \eqref{eq:negative_contribution} and \eqref{eq:stronger}, for any positive constants $c_1, c_2$ we have
 $$\limsup_{k\to\infty} \Big(c_1x_n^{y^\star}\left(\ln(x_n^{y^{\star\star}- y^\star}) + c_3\right)+c_2x_n^{ y}\left(\ln(x_n^{y'- y})+c_4\right)\Big)< 0,$$
 which is a contradiction of \eqref{eq:not_true}, hence \eqref{eq:dominated_reaction_negative} holds.
 
 \reply{In order to prove the last part of the result,} assume that $x_n^{y^\star}\ge c > 0$, \reply{where $c$ is as in the statement of the lemma}. Let $d_1, d_2 \in \RR_{>0}$ and $d_3,d_4\in \RR$.  We must show that for the particular choice of sequence $(x_n)_{n=0}^\infty$, and the particular choice of $y^\star$ and $y^{\star \star}$ we have that 
\begin{equation}\label{978689697869876}
\lim_{n\to \infty}\left(d_1x_n^{y^\star}\left( \ln(x_n^{y^{\star\star}- y^\star}) + d_3\right)+d_2x_n^{ y}\left(\ln(x_n^{y'- y})+ d_4\right)\right)=-\infty.
\end{equation}
We may apply \eqref{eq:dominated_reaction_negative} with $c_1=d_1/2$, $c_2 = d_2$, $c_3=d_3$ and $c_4=d_4$ to conclude that for $n$ large enough we have
\begin{align}
\label{908679786867643}
\begin{split}
d_1x_n^{y^\star}&\left(\ln(x_n^{y^{\star\star}- y^\star})+d_3\right)
+d_2x_n^{ y}\left(\ln(x_n^{y'- y})+d_4\right) \\
&<
d_1x_n^{y^\star}\left(\ln(x_n^{y^{\star\star}- y^\star})+d_3\right)+
d_2\left( \frac{d_1/2}{d_2} \left|x_n^{ y^\star}\left(\ln(x_n^{y^{\star\star}- y^{\star}})+d_3\right)\right|\right)\\
&= \frac{d_1}{2} x_{n}^{y^\star} \left(\ln(x_{n}^{y^{\star\star} - y^{\star}})+d_3\right),
\end{split}
\end{align}
where we are using that $x_n^{ y}\ln(x_n^{y'- y}) > 0$ and $x_{n}^{y^\star} \ln(x_{n}^{y^{\star\star} - y^{\star}}) < 0$.
Then, since $y^{\star \star}\tl y^\star$, by Lemma \ref{lem:logarithm_decomposition}\eqref{part3}\eqref{part5} we have
 $$\lim_{n\to\infty}\ln(x_{n}^{y^{\star \star}-y^\star})=\lim_{n\to\infty}\sum_{i=i_{y^\star, y^{\star \star}}}^\ell m^i_{n}\scal{\alpha_i}{y^{\star \star}-y^\star}=-\infty. $$
 It follows that
 \begin{equation}\label{finalequation}
 \lim_{n\to\infty}\frac{d_1}{2}x_{n}^{y^\star}\left(\ln(x_{n}^{y^{\star \star}-y^\star})+d_3\right)\leq \lim_{n\to\infty}\frac{d_1}{2}c \left(\ln(x_{n}^{y^{\star \star}-y^\star})+d_3\right)=-\infty.
 \end{equation}
 Combining \eqref{finalequation} and \eqref{908679786867643} yields \eqref{978689697869876}, and completes the proof.

\end{proof}

\begin{proposition}\label{prop:lyapunov}
 Consider a tier descending reaction network $\G$. Then, for any transversal tier sequence $(x_n)_{n=0}^\infty$ and any choice of positive constants $\kappa_{y\to y'}$, there exists $N<\infty$ such that
 \begin{equation}\label{eq:lyapunov_negative}
  \sum_{y\to y'\in\R} \kappa_{y\to y'}x_n^y\ln(x_n^{y'-y})<0\quad\text{for all }n\geq N.
 \end{equation}
 Moreover, if the complex 0 is a source complex, then
 \begin{equation}\label{eq:lyapunov_infinite}
  \lim_{n\to\infty}\sum_{y\to y'\in\R} \kappa_{y\to y'}x_n^y\ln(x_n^{y'-y})=-\infty.
 \end{equation}
\end{proposition}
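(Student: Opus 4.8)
The plan is to reduce everything to Lemma~\ref{lem:dominated_reaction} by grouping the reactions according to the tier order of their source and product, and then absorbing each summand that can fail to be negative into a small positive fraction of a summand that is guaranteed to be negative (indeed to diverge to $-\infty$).

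First I would partition $\R$ into $\R_+=\{y\to y'\in\R:\ y\tle y'\}$ and $\R_-=\{y\to y'\in\R:\ y'\tl y\}$; every reaction lies in exactly one of these. For $y\to y'\in\R_-$ one has $\ln(x_n^{y'-y})\to-\infty$ while $x_n^y>0$, so the corresponding summand is eventually negative; thus the only summands that can be non-negative come from $\R_+$. For each $r=(y\to y')\in\R_+$, Lemma~\ref{lem:dominated_reaction} supplies a reaction $\phi(r)=(y^\star\to y^{\star\star})$, which lies in $\R_-$ because $y^{\star\star}\tl y^\star$, such that for all positive $c_1,c_2$ (taking $c_3=c_4=0$) one has $c_1 x_n^{y^\star}\ln(x_n^{y^{\star\star}-y^\star})+c_2 x_n^{y}\ln(x_n^{y'-y})<0$ for $n$ large. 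Next I would split the rate constant $\kappa_s$ of each $s\in\R_-$ into $|\phi^{-1}(s)|+1$ positive pieces, assigning one piece to each $r\in\phi^{-1}(s)$ and keeping one ``reserve'' piece. This rewrites $\sum_{y\to y'\in\R}\kappa_{y\to y'}x_n^y\ln(x_n^{y'-y})$ as a finite sum whose terms are: (i) the reserve pieces $\tfrac{\kappa_s}{|\phi^{-1}(s)|+1}x_n^{y_s}\ln(x_n^{y_s'-y_s})$, each eventually negative as above; and (ii) for each $r\in\R_+$ the paired expression $\tfrac{\kappa_{\phi(r)}}{|\phi^{-1}(\phi(r))|+1}x_n^{y^\star}\ln(x_n^{y^{\star\star}-y^\star})+\kappa_r x_n^{y}\ln(x_n^{y'-y})$, which is eventually negative by Lemma~\ref{lem:dominated_reaction}. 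Since the sum is finite, \eqref{eq:lyapunov_negative} follows.

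For \eqref{eq:lyapunov_infinite}, assume $0\in\C^S$ and pick a reaction $0\to\bar y$. If $0\to\bar y\in\R_-$, its reserve piece is a positive multiple of $\ln(x_n^{\bar y})$, which tends to $-\infty$ since $\bar y\tl 0$; combined with all other pieces being eventually negative this gives the claim. If $0\to\bar y\in\R_+$, then its dominator $\phi(0\to\bar y)=(y^\star\to y^{\star\star})$ satisfies $0\tle y^\star$, and since $(x_n)_{n=0}^\infty$ is a tier sequence the limit $\lim_{n\to\infty} x_n^{-y^\star}=\lim_{n\to\infty} x_n^{0-y^\star}$ exists and is finite; as all $x_n^{y^\star}$ are positive, this forces $\inf_n x_n^{y^\star}>0$. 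The second conclusion of Lemma~\ref{lem:dominated_reaction} then applies to the paired expression associated with $0\to\bar y$, showing it tends to $-\infty$, and again all remaining pieces are eventually negative, so the entire sum tends to $-\infty$.

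The substantive content is entirely contained in Lemma~\ref{lem:dominated_reaction}, which I am assuming. The only points needing a little care here are the bookkeeping that prevents a single dominating reaction from being ``overdrawn'' when it dominates several reactions of $\R_+$ (handled by the $|\phi^{-1}(s)|+1$ split), and the elementary observation that $0\tle y^\star$ yields a uniform positive lower bound on $x_n^{y^\star}$ — precisely the hypothesis required to invoke the ``moreover'' half of the lemma.
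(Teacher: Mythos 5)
Your proof is correct and follows essentially the same route as the paper: both reduce everything to Lemma~\ref{lem:dominated_reaction}, separating the reactions with $y'\tl y$ (whose terms are eventually negative) from those with $y\tle y'$ (which are absorbed by a dominating descending reaction), with the finiteness of $\R$ handling the distribution of rate constants. Your explicit splitting of each $\kappa_s$ into $|\phi^{-1}(s)|+1$ pieces and the two-case treatment of the reaction $0\to\bar y$ in the second part merely spell out bookkeeping that the paper leaves implicit (the paper instead locates the diverging term at a reaction with source in $T^{1,S}_{(x_n)}$, using $0\tle y$ there, which is the same underlying fact you use to invoke the ``moreover'' half of the lemma).
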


\begin{proof}
The result follows from noting that for any reaction $y\to y'\in \R$ either $y'\tl y$ and
$$x_n^y\ln(x_{n_k}^{y'-y})<0,$$
or $y\tle y'$ and Lemma \ref{lem:dominated_reaction} holds. Hence, since there are finitely many reactions, for any choice of positive constants $\kappa_{y\to y'}$ there exists $N<\infty$ such that \eqref{eq:lyapunov_negative} holds.

For the second part of the statement, assume that 0 is a source complex. Then, by definition of $T^{1,S}_{(x_n)}$ we have $0\tle y$ for all $y\in T^{1,S}_{(x_n)}$, which implies that for all $y\in T^{1,S}_{(x_n)}$
$$\lim_{n\to\infty} x_n^y=\lim_{n\to\infty} x_n^{y-0}>0.$$
Since $(x_n)_{n=0}^\infty$ is transversal and $\G$ is tier descending, $(x_n)_{n=0}^\infty$ is tier descending. Hence, there is a reaction $y\to y'\in\R$ with $y\in T^{1,S}_{(x_n)}$ and $y'\tl y$. Hence
$$\lim_{n\to\infty} x_n^y\ln(x_n^{y'-y})=-\infty,$$
and similarly as before \eqref{eq:lyapunov_infinite} follows from Lemma \ref{lem:dominated_reaction}.
 \end{proof}

\section{Persistence and Permanence}
\label{sec:permanence}

The paper \cite{GMS:geometric} deals with persistence and permanence of deterministic mass action systems associated with a strongly endotactic reaction network. The relevant definitions are as follows.

\begin{definition}\label{def:persistence}
 A deterministic reaction system is \emph{persistent} if for any initial condition $z(0)\in \RR^d_{>0}$
 $$\inf_{t\geq 0}z_i(t)>0\quad\text{for all }1\leq i\leq d.$$
\end{definition}

\begin{definition}\label{def:permanence}
 A deterministic reaction system is \emph{permanent} if for every set
 $$S_y=(y+S)\cap \RR^d_{>0}$$
 with $y\in\RR^d_{>0}$, there exists a compact set $K\subset S_y$ such that for any initial condition $z(0)\in S_y$
 $$\inf\{t\geq 0\,:\,z(s)\in K\quad\text{for all }s\geq t\}<\infty.$$ 
\end{definition}

Thus, a deterministic reaction system is permament if there exists a compact set in the interior of each  positive stoichiometric compatibility class that eventually attracts all the solutions with a positive initial condition in that stoichiometric compatibility class. Note that if a reaction network is permanent, then it is \reply{non-explosive and} persistent.

The following is an important result in \cite{GMS:geometric}.  It is used to prove persistence and permanence of strongly endotactic networks. In our setting, it can be derived as a corollary of the results on tier sequences stated in Section \ref{sec:lyapunov}.


\begin{corollary}\label{cor:lyapunov}
 Let $\G$ be a strongly endotactic reaction network and consider a generalization of mass action kinetics with parameter dependent and time variable rate constants:
 $$\lambda_{y\to y'}(x,t, \theta)=\kappa_{y\to y'}(t, \theta) x^y,$$
 where $\theta$ is in some parameter space $\Omega$ and $t\in\RR_{\geq0}$. Assume that there exists $\delta>0$ such that
 \begin{equation}\label{eq:log_bounded}
  \delta<\kappa_{y\to y'}(t, \theta)<\frac{1}{\delta}\quad\text{for all }t\geq0, \theta\in\Omega, y\to y'\in \R.
 \end{equation}
 Let  
 $$z(t,\theta)=z(0,\theta)+\sum_{y\to y'\in\R} (y'-y)\int_0^t\lambda_{y\to y'}(z(s,\theta),s,\theta)\,ds.$$ 
 Fix a set $S_y$ as in Definition \ref{def:permanence}. Then, there exists a compact set $\Gamma\subset S_y$ such that
 $$\frac{d}{dt}U(z(t,\theta))<0 \quad\text{if }z(t,\theta)\notin \Gamma,$$
 given that $z(0,\theta)\in S_y$ and $U(\cdot)$ is as in \eqref{eq:lyapunov}.
 In particular, it follows that for any open set $B$ containing the origin,
 $$\inf_{z(0,\theta)\in \RR^d_{>0}\setminus B}\inf_{t\geq 0}\|z(t,\theta)\|_\infty>0.$$
\end{corollary}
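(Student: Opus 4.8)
The plan is to reduce the statement, exactly as in the passage from \cite{A:single} to the deterministic permanence results, to a mild strengthening of Proposition~\ref{prop:lyapunov} in which the rate constants are allowed to vary inside a compact subset of $\RR_{>0}$. First I would differentiate $U$ along a trajectory: since $u'(x)=\ln x$ for $x>0$, and using the ODE for $z(\cdot,\theta)$, on the interval where $z(\cdot,\theta)$ is defined and positive the chain rule gives
\begin{equation*}
\frac{d}{dt}U(z(t,\theta))=\sum_{i=1}^d\ln\!\big(z_i(t,\theta)\big)\,\dot{z}_i(t,\theta)=\sum_{y\to y'\in\R}\kappa_{y\to y'}(t,\theta)\,z(t,\theta)^{y}\,\ln\!\big(z(t,\theta)^{y'-y}\big).
\end{equation*}
Thus it suffices to produce a compact $\Gamma\subset S_y$ such that the function $g(x,t,\theta):=\sum_{y\to y'\in\R}\kappa_{y\to y'}(t,\theta)\,x^{y}\ln(x^{y'-y})$ is negative for every $x\in S_y\setminus\Gamma$, every $t\geq0$, and every $\theta\in\Omega$ (this is stronger than what is asked, but cleaner; it also yields non-explosion, since $U$ then remains bounded along $z(\cdot,\theta)$ and $U(x)\to\infty$ as $\|x\|_\infty\to\infty$).

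Next I would argue by contradiction, reusing the extraction scheme from the proof of Lemma~\ref{lem:proper_transversal}. If no such $\Gamma$ existed, then along a compact exhaustion $\Gamma_n\uparrow S_y$ one could pick $x_n\in S_y\setminus\Gamma_n$, times $t_n\geq0$, and parameters $\theta_n\in\Omega$ with $g(x_n,t_n,\theta_n)\geq0$, and since $x_n$ leaves every compact subset of $S_y$ we get $\|\ln(x_n)\|_\infty\to\infty$. The crucial point is that all the $x_n$ lie in $y+S$, so after passing to a subsequence $(x_n)_{n=0}^\infty$ is a \emph{proper} tier sequence (Remark~\ref{remark1}), hence transversal by Lemma~\ref{lem:proper_transversal}; and $\G$, being strongly endotactic, is tier descending by Theorem~\ref{thm:SE_tiers}, so $(x_n)_{n=0}^\infty$ is a tier descending tier sequence. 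Now I would run the proof of Proposition~\ref{prop:lyapunov} with the coefficients $\kappa_{y\to y'}(t_n,\theta_n)$ in place of fixed constants: by \eqref{eq:log_bounded} these lie in $[\delta,1/\delta]$ uniformly in $n$, so for each reaction with $y'\tl y$ one has $\kappa_{y\to y'}(t_n,\theta_n)x_n^{y}\ln(x_n^{y'-y})\leq\delta\,x_n^{y}\ln(x_n^{y'-y})<0$ for $n$ large, while for each remaining reaction ($y\tle y'$) one bounds $\kappa_{y\to y'}(t_n,\theta_n)x_n^{y}\ln(x_n^{y'-y})$ by $\tfrac1\delta\,x_n^{y}\big(\ln(x_n^{y'-y})+c\big)$ for a suitable constant $c$ and absorbs it into a fraction $\delta/|\R|$ of the strictly negative contribution of its dominating reaction supplied by Lemma~\ref{lem:dominated_reaction} (applied with $c_1=\delta/|\R|$, $c_2=1/\delta$, $c_3=0$, $c_4=c$); the freedom in the constants $c_1,c_2,c_3,c_4$ of that lemma is precisely what makes this bookkeeping go through. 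Summing over the finitely many reactions gives $g(x_n,t_n,\theta_n)<0$ for $n$ large, contradicting $g(x_n,t_n,\theta_n)\geq0$, and produces the desired $\Gamma$.

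Finally, the ``in particular'' assertion should follow from this uniform strict Lyapunov estimate by the LaSalle-type reasoning used to prove persistence in \cite{A:single,GMS:geometric}: since $U$ decreases strictly outside a compact subset of each stoichiometric compatibility class, uniformly in $t$ and $\theta$, trajectories issued from $\RR^d_{>0}\setminus B$ cannot approach the origin, and the resulting lower bound on $\|z(t,\theta)\|_\infty$ can be taken independent of $\theta$ and of the initial condition. I expect the genuine obstacle to lie here rather than in the (essentially bookkeeping) construction of $\Gamma$: the delicate point is making this lower bound uniform over the infinitely many compatibility classes — in particular controlling trajectories whose initial point, though outside $B$, lies close to the stoichiometric subspace $S$, where the class itself comes arbitrarily near the origin — which is handled by reducing to the classes whose closures meet a fixed neighborhood of the origin and invoking a compactness argument together with the uniformity (in $t,\theta$) of the estimate from the first part.
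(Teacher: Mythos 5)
Your first half is essentially the paper's own proof: the paper also argues by contradiction, extracts from points of $S_y$ with nonnegative Lyapunov derivative a proper (hence, by Lemma~\ref{lem:proper_transversal}, transversal) tier sequence, and concludes via \eqref{eq:log_bounded} and Proposition~\ref{prop:lyapunov}. Your explicit re-run of that proposition for time- and parameter-dependent rates, with the constants of Lemma~\ref{lem:dominated_reaction} chosen as $c_1=\delta/|\R|$, $c_2=1/\delta$, is exactly the bookkeeping the paper leaves implicit, and it is correct.

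The gap is in the ``in particular'' part. From ``$U$ decreases strictly along the trajectory outside the compact set $\Gamma$'' you infer that trajectories started in $\RR^d_{>0}\setminus B$ ``cannot approach the origin,'' but this inference does not stand on its own: $U$ is bounded near the origin ($U(0)=d+1$) and is not large near the boundary, so a trajectory along which $U$ decreases could a priori still drift toward the origin. The missing idea — the one the paper's proof is built on — is that the origin is a strict local maximum of $U$ on $\RR^d_{\geq0}$ (indeed $u(x_i)<1=u(0)$ for $0<x_i<e$) and that $\Gamma\subset S_y\subset\RR^d_{>0}$ is disjoint from a whole neighborhood of the origin. One then picks a small ball $B'\subseteq B$ around $0$ disjoint from $\Gamma$ and uses the superlevel set $\{x\in B'\,:\,U(x)>\max_{\partial B'\cap\RR^d_{\geq0}}U\}$ as a barrier: to enter it the trajectory would have to increase $U$ while inside $B'$, where $U$ is strictly decreasing. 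This produces a neighborhood $B''$ of the origin that no trajectory from $\RR^d_{>0}\setminus B$ can reach, hence the uniform positive lower bound. Relatedly, the ``compactness argument over the classes whose closures meet a neighborhood of the origin'' you invoke for uniformity in the initial condition is not an argument as stated (there are uncountably many classes and $\Gamma$ varies with the class); if one wants the bound uniformly over classes, the clean route is to rerun the tier-sequence contradiction for sequences $x_n\to 0$ lying in classes containing a point of norm bounded below — transversality then follows as in Lemma~\ref{lem:proper_transversal}, because a vector $w$ with negative entries orthogonal to $S$ would force $\scal{w}{x_n}\to 0$ while $\scal{w}{\cdot}$ stays bounded away from $0$ at the far point of the same class — and then to apply the barrier argument above; a compactness argument alone does not deliver it.
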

\begin{proof}
 If the result were not true, there would be a sequence of vectors $(x_n)_{n=0}^\infty$ in $S_y$ for which $\lim_{n\to\infty} \|\ln(x_n)\|_\infty=\infty$ and an increasing sequence of times $(t_n)_{n=0}^\infty$ such that
 \begin{equation}\label{eq:contradiction}
  \sum_{y\to y'\in\R} \kappa_{y\to y'}(t_n)x_n^y\scal{y'-y}{\nabla U(x_n)}=\sum_{y\to y'\in\R} \kappa_{y\to y'}(t_n)x_n^y\ln(x_n^{y'-y})\geq0,
 \end{equation}
 for all $n \ge 0$.
 However, by Remark \ref{remark1} we can extract a proper tier sequence from $(x_n)_{n=0}^\infty$, hence \eqref{eq:contradiction} cannot hold by \eqref{eq:log_bounded}, Proposition \ref{prop:lyapunov} and Lemma \ref{lem:proper_transversal}.
 
 The second part of the result follows by noting that the origin is a local maximum for the function $U(\cdot)$, and it is not contained in the compact set $\Gamma$. Hence, for any open set $B$ (relative to $\RR^d_{\geq0}$) that contains the origin, there exists a neighborhood $B'\subseteq B$ of the origin (relative to $\RR^d_{\geq0}$) that does not intersect $\Gamma$ (implying that $U(z(t,\theta))$ decreases if $z(t,\theta)$ is in $B'$), and such that
 $$0=\argmax_{x\in B'}U(x).$$
 Hence, there exists an open set $B''$ (relative to $\RR^d_{\geq0}$) that contains the origin and cannot be reached by any trajectory with $z(0)\in\RR^d_{>0}\setminus B$.
\end{proof}

We will also need the following results. 

\begin{lemma}\label{lem:projection}
 Assume that $\G$ is strongly endotactic, and consider a non-empty subset of species $\tilde{\Sp}\subseteq\Sp$. Let $p$ be the projection from $\RR^d$ onto the coordinates relative to the species in $\tilde{\Sp}$. Then, the reaction network $\tilde{\G}=(\tilde{\Sp}, \tilde{\C}, \tilde{\R})$ with
 \begin{align*}
  \tilde{\C}&=\{p(y)\,:\,y\in \C\}\\
  \tilde{\R}&=\{p(y)\to p(y')\,:\, y\to y'\in\R\quad\text{and}\quad p(y)\neq p(y')\}
 \end{align*}
 is strongly endotactic.
\end{lemma}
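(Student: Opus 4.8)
The plan is to verify the two conditions of Definition~\ref{def:strongly_endotactic} for $\tilde\G$ directly from the definition, without invoking Theorem~\ref{thm:SE_tiers}. First I would record two elementary facts. The stoichiometric subspace of $\tilde\G$ equals $p(S)$, since it is spanned by the vectors $p(y'-y)$ over $y\to y'\in\R$ and the terms with $p(y)=p(y')$ vanish. And every source complex of $\tilde\G$ is of the form $p(y)$ for some source complex $y$ of $\G$, which is immediate from the definition of $\tilde\R$. Now fix a vector $\tilde w$ not orthogonal to $p(S)$, and lift it to $w\in\RR^d$ by setting $w_i=\tilde w_i$ for $i\in\tilde\Sp$ and $w_i=0$ for $i\notin\tilde\Sp$, so that $\scal{w}{v}=\scal{\tilde w}{p(v)}$ for all $v\in\RR^d$; in particular $w\notin S^\perp$. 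Write $\C^S$ for the source complexes of $\G$, $\tilde\C^S$ for those of $\tilde\G$, and $M=\max_{y\in\C^S}\scal{w}{y}$. The two facts above give $\scal{\tilde w}{\tilde z}\le M$ for every $\tilde z\in\tilde\C^S$.

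For condition~(2) of Definition~\ref{def:strongly_endotactic}, I would apply condition~(2) for $\G$ and $w$: there are a $w$-maximal complex $y$ of $\G$ and a reaction $y\to y''\in\R$ with $\scal{w}{y''-y}<0$, i.e.\ $\scal{\tilde w}{p(y'')-p(y)}<0$. In particular $p(y)\ne p(y'')$, so $p(y)\to p(y'')\in\tilde\R$ and $p(y)\in\tilde\C^S$. Moreover $p(y)$ is $\tilde w$-maximal in $\tilde\G$: for any $\tilde z\in\tilde\C^S$, choosing a source preimage $z$ of $\tilde z$ in $\G$ gives $\scal{\tilde w}{\tilde z}=\scal{w}{z}\le\scal{w}{y}=\scal{\tilde w}{p(y)}$. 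Thus $p(y)\to p(y'')$ witnesses condition~(2) for $\tilde\G$.

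For condition~(1), let $\tilde z$ be a $\tilde w$-maximal complex of $\tilde\G$ and $\tilde z\to\tilde z'\in\tilde\R$; choose $z\to z'\in\R$ with $p(z)=\tilde z$ and $p(z')=\tilde z'$. The crux is the claim that $z$ is $w$-maximal in $\G$. I would argue this by contradiction: if $z$ is not $w$-maximal, then $\scal{w}{z}<M$; since $\scal{w}{z}=\scal{\tilde w}{\tilde z}=\max_{\tilde z''\in\tilde\C^S}\scal{\tilde w}{\tilde z''}$ (because $\tilde z$ is $\tilde w$-maximal in $\tilde\G$), every $w$-maximal complex $y$ of $\G$ satisfies $\scal{\tilde w}{p(y)}=\scal{w}{y}=M>\max_{\tilde z''\in\tilde\C^S}\scal{\tilde w}{\tilde z''}$, so $p(y)$ is not a source complex of $\tilde\G$; hence every reaction $y\to y''\in\R$ satisfies $p(y'')=p(y)$, and therefore $\scal{w}{y''-y}=\scal{\tilde w}{p(y'')-p(y)}=0$. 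This contradicts condition~(2) for $\G$ and $w$, which supplies a $w$-maximal complex $y$ and a reaction $y\to y''$ with $\scal{w}{y''-y}<0$. With the claim established, condition~(1) for $\G$ applied to the $w$-maximal complex $z$ and the reaction $z\to z'$ yields $\scal{\tilde w}{\tilde z'-\tilde z}=\scal{w}{z'-z}\le0$, which is condition~(1) for $\tilde\G$.

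The only step that is not pure bookkeeping with the identity $\scal{w}{\cdot}=\scal{\tilde w}{p(\cdot)}$ is the claim that the witnessing source complex $z$ is $w$-maximal in $\G$, and this is precisely where the strict-descent requirement~(2) of strong endotacticity is used in an essential way: if $z$ were not $w$-maximal, every $w$-maximal complex of $\G$ would be ``degenerate'', in the sense that every reaction leaving it keeps all coordinates in $\tilde\Sp$ fixed, and then no such reaction could strictly decrease $\scal{w}{\cdot}$, contradicting~(2). I expect this to be the main (and essentially the only) obstacle; once it is isolated the remainder follows mechanically.
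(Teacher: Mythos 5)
Your proof is correct, but it takes a genuinely different route from the paper. The paper proves this lemma as a quick consequence of its tier characterization (Theorem \ref{thm:SE_tiers}): given a transversal tier sequence $(\tilde x_n)$ of $\tilde\G$, it extends it to $x_n=(\tilde x_n,\hat x)$ with the coordinates outside $\tilde{\Sp}$ frozen at a fixed positive value, checks that $(x_n)$ is a transversal tier sequence of $\G$ with the identical tier structure (since $x_n^y$ equals $\tilde x_n^{p(y)}$ up to a constant), and concludes that $\tilde\G$ is tier descending, hence strongly endotactic. You instead verify Definition \ref{def:strongly_endotactic} directly, lifting the vector $\tilde w$ by zero-padding so that $\scal{w}{v}=\scal{\tilde w}{p(v)}$; the two constructions are dual to one another (freezing coordinates of the sequence corresponds, after taking logarithms, to zero components of the direction vector), but your argument bypasses the tier machinery entirely. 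The one genuinely nontrivial step in your version --- that a source preimage $z$ of a $\tilde w$-maximal complex must itself be $w$-maximal in $\G$ --- is handled correctly: if it were not, every $w$-maximal complex of $\G$ would project outside $\tilde\C^S$, so all reactions leaving it would fix the $\tilde{\Sp}$-coordinates and give $\scal{w}{y''-y}=0$, contradicting condition (2) for $\G$ (which applies since $w\notin S^\perp$, as $\tilde w\notin p(S)^\perp$ and the stoichiometric subspace of $\tilde\G$ is $p(S)$). What each approach buys: yours is self-contained and purely geometric, and would work even in a treatment that never introduces tiers; the paper's is shorter given the machinery already built and keeps the exposition uniform, since the lemma is used downstream (Lemma \ref{lem:away_from_boundary}) in a tier-based argument anyway.
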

\begin{proof}
 Let $\tilde d$ be the cardinality of $\tilde{\Sp}$. For convenience, assume without loss of generality that the species of $\tilde{\Sp}$ are ordered as the first $\tilde d$ species, such that for any $x\in\RR^d_{\geq0}$ we can write $x=(\tilde x, \hat x)$ with $p(x)=\tilde x$ and $\hat x\in\RR^{d-\tilde d}_{\geq0}$. Let $(\tilde x_n)_{n=0}^\infty$ be a transversal tier sequence of $\tilde\G$, fix $\hat{x}\in\RR^{d-\tilde d}_{>0}$ and for any $n\geq0$ let $x_n=(\tilde x_n, \hat x)$. Note that for any $n\geq0$ and for any complex $y\in\C$,  $x_n^y$ is equal to $\tilde x_n^{p(y)}$ times a multiplicative constant that is independent of $n$. It follows that $(x_n)_{n=0}^\infty$ is a transversal tier sequence of $\G$, and that for any $i\geq1$ we have $y\in T^i_{(x_n)}$ if and only if $p(y)\in T^i_{(\tilde x_n)}$, which in turn implies that $(x_n)_{n=0}^\infty$ is tier descending if and only if $(\tilde x_n)_{n=0}^\infty$ is tier descending. Hence, we conclude that $\tilde \G$ is strongly endotactic by the fact that $\G$ is strongly endotactic and by Theorem \ref{thm:SE_tiers}.
\end{proof}

\begin{lemma}\label{lem:away_from_boundary}
 Consider a deterministic mass action system $(\G, \Lambda)$, and assume $\G$ is strongly endotactic. Then, for any compact set $\Upsilon\subset \RR^d_{>0}$
 \begin{gather}
  \sup_{z(0)\in\Upsilon}\sup_{t\geq0}\|z(t)\|_\infty<\infty\label{eq:unif_upper_bound}\\
  \inf_{z(0)\in\Upsilon}\inf_{t\geq0}z_i(t)>0\quad\text{for all }1\leq i\leq d.\label{eq:unif_lower_bound}
 \end{gather}
\end{lemma}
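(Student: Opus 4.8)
The plan is to route everything through the Lyapunov function $U$ of \eqref{eq:lyapunov}, whose gradient is $\nabla U(x)=\ln(x)$, so that along any deterministic mass action trajectory $\frac{d}{dt}U(z(t))=\Psi(z(t))$, where $\Psi(x):=\sum_{y\to y'\in\R}\kappa_{y\to y'}x^{y}\ln\!\big(x^{y'-y}\big)$.

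\emph{Uniform upper bound \eqref{eq:unif_upper_bound}.} First I would show that there is a radius $R<\infty$ with $\Psi(x)<0$ for every $x\in(\Upsilon+S)\cap\RR^{d}_{>0}$ satisfying $\|x\|_{\infty}>R$. If not, there is a sequence $(x_{n})$ in $(\Upsilon+S)\cap\RR^{d}_{>0}$ with $\|x_{n}\|_{\infty}\to\infty$ (hence $\|\ln(x_{n})\|_{\infty}\to\infty$) and $\Psi(x_{n})\ge 0$ for all $n$. Writing $x_{n}=\upsilon_{n}+s_{n}$ with $\upsilon_{n}\in\Upsilon$ and $s_{n}\in S$, I would pass to a subsequence that is a tier sequence (Remark~\ref{remark1}) and along which $\upsilon_{n}\to p\in\Upsilon$; this lets one regard $(x_{n})$ as a \emph{proper} tier sequence (its members lie, up to a vanishing shift, in the single class $p+S$), hence a transversal one by Lemma~\ref{lem:proper_transversal}. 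Since $\G$ is tier descending by Theorem~\ref{thm:SE_tiers}, Proposition~\ref{prop:lyapunov} forces $\Psi(x_{n})<0$ for $n$ large, a contradiction. Given such an $R$, a standard comparison closes the argument: $\{U\le c\}$ is bounded, $U(x)\to\infty$ as $\|x\|_{\infty}\to\infty$, and for $z(0)\in\Upsilon$ the map $t\mapsto U(z(t))$ strictly decreases whenever $\|z(t)\|_{\infty}>R$, so $U(z(t))\le C:=\max\big(\max_{\Upsilon}U,\ \max\{U(x):x\in(\Upsilon+S)\cap\RR^{d}_{\ge 0},\ \|x\|_{\infty}\le R\}\big)$ for all $t$; this bounds $\|z(t)\|_{\infty}$ uniformly and, along the way, rules out finite-time blow-up.

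\emph{Uniform lower bound \eqref{eq:unif_lower_bound}.} Here I would argue by strong induction on $d$, the case $d=1$ being immediate from Corollary~\ref{cor:lyapunov}. For a species $S_{i}$, Lemma~\ref{lem:projection} gives that the projection of $\G$ onto $\{S_i\}$ is strongly endotactic, and $t\mapsto z_{i}(t)$ solves the associated one-species mass action ODE with the time-dependent rate constants $\tilde\kappa_{\tilde y\to\tilde y'}(t)=\sum\kappa_{y\to y'}\,\hat z(t)^{\hat y}$, the sum over reactions $y\to y'$ with $y_i=\tilde y$ and $y'_i=\tilde y'$, where $\hat z(t)$ collects the remaining $d-1$ coordinates of $z(t)$. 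The bound $\tilde\kappa<1/\delta$ in the hypothesis of Corollary~\ref{cor:lyapunov} is immediate from \eqref{eq:unif_upper_bound}; for $\tilde\kappa>\delta$ one needs the complementary coordinates uniformly bounded away from $0$, which I would obtain from the induction hypothesis applied to $\G$ restricted to the set of not-yet-controlled species (again strongly endotactic by Lemma~\ref{lem:projection}, with effective rates two-sidedly bounded via \eqref{eq:unif_upper_bound} and via the species already controlled). With the two-sided bound in hand, Corollary~\ref{cor:lyapunov} applied to the projected system with parameter space $\Omega=\Upsilon$ gives $\inf_{z(0)\in\Upsilon}\inf_{t\ge 0}\|p(z(t))\|_{\infty}>0$; since $p(\Upsilon)$ is a compact subset of the open positive orthant it avoids a neighbourhood of the origin, and this is precisely $\inf_{z(0)\in\Upsilon}\inf_{t}z_{i}(t)>0$.

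\emph{Main obstacle.} The delicate point in the upper bound is that the $x_{n}$ need not lie in a common stoichiometric compatibility class, so the tier sequence extracted from them is not automatically proper; the shift to a fixed class $p+S$ must be arranged so as not to perturb the limits $\lim_{n}x_{n}^{y'-y}$, which is where compactness of $\Upsilon$ is genuinely used. In the lower bound the delicate point is the apparent circularity — bounding $z_{i}$ below seems to require the other coordinates bounded below — which the induction on the number of species, together with Lemma~\ref{lem:projection} and the time-varying version of the estimate in Corollary~\ref{cor:lyapunov}, is designed to break.
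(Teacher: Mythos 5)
Your induction for \eqref{eq:unif_lower_bound} does not break the circularity it is designed to break, and this is a genuine gap. The inductive hypothesis is the lemma itself, i.e., a statement about \emph{autonomous} mass action systems on fewer species; but the object you would need to control, namely the projection of $z(t)$ onto the ``not-yet-controlled'' species, is not the mass action dynamics of the projected network. It solves the projected equations only with the time-varying rates $\tilde\kappa_{\tilde y\to\tilde y'}(t,\theta)$, so the induction hypothesis does not apply to it, and to use the time-varying estimate of Corollary~\ref{cor:lyapunov} instead you need \eqref{eq:log_bounded}, whose lower bound requires the complementary coordinates to be uniformly bounded away from zero --- precisely the quantities that are not yet controlled. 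No ordering of the species resolves this: whichever subset you project onto, the lower bound on the effective rates involves coordinates outside that subset, so the ``strong induction on $d$'' never gets off the ground beyond $d=1$. The paper escapes this by not choosing the subset a priori: assuming \eqref{eq:unif_lower_bound} fails, the compactness supplied by \eqref{eq:unif_upper_bound} yields an accumulation point $\omega\in\partial\RR^d_{\geq0}$ of the trajectories, and one projects onto exactly the species that vanish at $\omega$. Whenever the trajectory is near $\omega$ the complementary species are automatically bounded away from zero, so \eqref{eq:log_bounded} holds there, and the second part of Corollary~\ref{cor:lyapunov} (projected trajectories starting away from the origin cannot approach it) contradicts the fact that the projected trajectories get arbitrarily close to the origin of $\RR^{\tilde d}$. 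Some version of this adapted-projection idea is needed; the induction as stated does not substitute for it.

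For \eqref{eq:unif_upper_bound} your route (show $\Psi<0$ on $(\Upsilon+S)\cap\RR^d_{>0}$ outside a large ball, then a comparison with $U$) is sound and close in spirit to the paper's appeal to Corollary~\ref{cor:lyapunov}, but the step ``regard $(x_n)$ as a proper tier sequence up to a vanishing shift'' is not justified: shifting $x_n$ by $p-\upsilon_n$ can change the limits $x_n^{y'-y}$ drastically when some coordinate of $x_n$ tends to zero faster than $\|p-\upsilon_n\|_\infty$, and the shifted points may even leave $\RR^d_{>0}$. The correct repair is not to shift but to rerun the proof of Lemma~\ref{lem:proper_transversal} directly for sequences in $\Upsilon+S$: if the extracted tier sequence were not transversal, the vector $w$ constructed there would lie in $S^\perp$, so $\scal{w}{x_n}=\scal{w}{\upsilon_n}$ is bounded; boundedness rules out any coordinate diverging, hence all nonzero entries of $w$ are negative and $\scal{w}{x_n}\to 0$, while along the subsequence $\scal{w}{\upsilon_n}\to\scal{w}{p}<0$ since $p\in\Upsilon\subset\RR^d_{>0}$ --- a contradiction. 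With transversality established this way, Theorem~\ref{thm:SE_tiers} and Proposition~\ref{prop:lyapunov} complete your upper-bound argument as intended, and in fact give a uniform-over-classes statement that the paper only asserts implicitly.
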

\begin{proof}
 By Corollary \ref{cor:lyapunov}, 
 we have that there exists a compact set $\Gamma\subset \RR^d_{>0}$ such that the function $U(z(t))$ is decreasing whenever $z(t)\notin \Gamma$. It follows that
 $$\sup_{z(0)\in\Upsilon}\sup_{t\geq0}U(z(t))\leq \max\left\{\max_{z(0)\in\Upsilon}U(z(0)), \max_{x\in\Gamma}U(x)\right\}<\infty,$$
 and the sets of the form $\{x\in\RR^d_{\geq0}:U(x)\leq M\}$ are compact for any finite constant $M$. So \eqref{eq:unif_upper_bound} is proven.
 
 Now, assume \eqref{eq:unif_lower_bound} does not hold: this implies that there exists $1\leq i\leq d$ such that
 $$\inf_{z(0)\in\Upsilon}\inf_{t\geq 0}z_i(t)=0.$$
 For simplicity, in the rest of the proof we will denote by $\theta$ an element of $\Upsilon$ and by $z_\theta(\cdot)$ the solution with $z_\theta(0)=\theta$. Since
 $$\{z_\theta(t)\,:\,\theta\in\Upsilon, t\geq0\}$$
 is contained in a compact set by \eqref{eq:unif_upper_bound}, there must be an accumulation point $\omega\in\partial\RR^d_{\geq0}$ with
 \begin{equation}\label{eq:going_to_boundary}
 \inf_{\theta\in\Upsilon}\inf_{t\geq 0}\|z_\theta(t)-\omega\|_\infty=0. 
 \end{equation}
 Let $\tilde{\Sp}\subseteq\Sp$ be the species whose entries are zero in $\omega$. Note that $\tilde{\Sp}$ is not empty because $\omega\in\partial\RR^d_{\geq0}$, and for convenience denote by $\tilde{d}$ its cardinality. Consider the associated reaction network $\tilde{\G}$, as described in Lemma \ref{lem:projection}, and consider the parameter dependent time variable rate functions
 $$\tilde{\lambda}_{\tilde y\to \tilde y'}(\tilde{x},t,\theta)=\tilde{\kappa}_{\tilde y\to \tilde y'}(t,\theta)\tilde{x}^{\tilde{y}}\quad\text{for all }\tilde{x}\in\RR^{\tilde d}, \theta\in\Upsilon, \tilde y\to \tilde y'\in \tilde{\R},$$
 where 
 $$\tilde{\kappa}_{\tilde y\to \tilde y'}(t,\theta)=\sum_{\substack{y\to y'\\p(y)=\tilde y, p(y')=\tilde y'}}\kappa_{y\to y'}\frac{z_\theta(t)^y}{p(z_\theta(t))^{\tilde y}}.$$
Note that we are essentially placing the influence  of those species which are not equal to zero at $\omega$ into the (now time-dependent) rate constants.
 It follows that
 $$p(z_\theta(t))=p(z_\theta(0))+\sum_{\tilde y\to\tilde y'\in\tilde\R} (\tilde y'-\tilde y)\int_0^t\tilde\lambda_{\tilde y\to\tilde y'}(p(z_\theta(s)),s,\theta)\,ds.$$
 Moreover, $\tilde{\G}$ is strongly endotactic by Lemma \ref{lem:projection}. Note that in a neighborhood of $\omega$ the functions $\tilde{\kappa}_{\tilde y\to \tilde y'}(t,\theta)$ satisfy \eqref{eq:log_bounded} because the entries relative to species that are not in $\tilde{\Sp}$ are bounded away from 0. Hence from \eqref{eq:going_to_boundary} it follows that the solutions $p(z_\theta(\cdot))$ get arbitrarily close to the origin (of $\RR^{\tilde d}$), but this is in contradiction with the second part of Corollary \ref{cor:lyapunov} and the proof is concluded.
\end{proof}

We now state and prove here the main results of \cite{GMS:geometric}. The proofs we propose rely on Corollary \ref{cor:lyapunov} and Lemma \ref{lem:away_from_boundary}, and have substantial similarities with the techniques developed in \cite{A:single, A:boundedness, GMS:geometric}. 

\begin{theorem}\label{thm:persistence}
 Consider a deterministic mass action system $(\G, \Lambda)$, and assume $\G$ is strongly endotactic. Then, $(\G, \Lambda)$ is persistent.
\end{theorem}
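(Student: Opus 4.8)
The plan is to deduce the theorem directly from Lemma~\ref{lem:away_from_boundary}, which contains all the substantive content. Persistence, in the sense of Definition~\ref{def:persistence}, is a statement about an individual trajectory, so I would fix an arbitrary initial condition $z(0)\in\RR^d_{>0}$ and apply Lemma~\ref{lem:away_from_boundary} with the compact set $\Upsilon=\{z(0)\}$, which is indeed a compact subset of $\RR^d_{>0}$.

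The first thing to address is that the solution is defined for all $t\ge0$ in the first place (recall that mass-action solutions are only guaranteed on $[0,t^\star)$ in case of a finite-time blow-up). This follows from \eqref{eq:unif_upper_bound}: on the maximal interval of existence we get $\sup_{t}\|z(t)\|_\infty<\infty$, and since the mass-action rate functions $\ld_{y\to y'}(x)=\kappa_{y\to y'}x^y$ are polynomials, hence locally Lipschitz on all of $\RR^d_{\ge0}$, a bounded solution cannot explode in finite time. Thus $z(\cdot)$ is defined on $[0,\infty)$, and then \eqref{eq:unif_lower_bound}, applied to the same singleton $\Upsilon$, gives $\inf_{t\ge0}z_i(t)>0$ for every $1\le i\le d$, which is exactly the conclusion required by Definition~\ref{def:persistence}. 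This finishes the argument.

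Because the work has been front-loaded into the preceding lemmas, I do not expect any genuine obstacle at the level of this theorem; the only thing to be slightly careful about is the bookkeeping connecting the quantifiers, namely that persistence is per-trajectory so a singleton $\Upsilon$ suffices and no uniformity over $\RR^d_{>0}$ is needed here. For context, the real difficulty is buried one layer down in Lemma~\ref{lem:away_from_boundary}: Corollary~\ref{cor:lyapunov} (a consequence of the tier characterization Theorem~\ref{thm:SE_tiers} via Proposition~\ref{prop:lyapunov}) produces a compact $\Gamma\subset\RR^d_{>0}$ outside of which $\frac{d}{dt}U(z(t))<0$; boundedness of $U$ along the trajectory confines $z(\cdot)$ to a compact set; and if the trajectory still accumulated on $\partial\RR^d_{\ge0}$ one projects onto the species vanishing at such an accumulation point, absorbs the remaining coordinates (which are bounded and bounded away from zero near that point) into time-dependent rate constants satisfying \eqref{eq:log_bounded}, and uses the strongly endotactic reduction of Lemma~\ref{lem:projection} together with the positive-orthant conclusion of Corollary~\ref{cor:lyapunov} to reach a contradiction — but all of that is already done, so here nothing remains beyond invoking Lemma~\ref{lem:away_from_boundary}.
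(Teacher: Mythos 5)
Your argument is exactly the paper's: the theorem is deduced by applying Lemma~\ref{lem:away_from_boundary}, specifically \eqref{eq:unif_lower_bound}, with the singleton $\Upsilon=\{z(0)\}$. Your additional remark that \eqref{eq:unif_upper_bound} rules out finite-time blow-up (so the trajectory exists for all $t\geq 0$) is a correct and slightly more careful piece of bookkeeping that the paper leaves implicit, but it does not change the route.
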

\begin{proof}
 The theorem just follows from Lemma \ref{lem:away_from_boundary}, in particular from \eqref{eq:unif_lower_bound}, by considering $\Upsilon=z(0)\in\RR^d_{>0}$.
\end{proof}

\begin{theorem}\label{thm:permamence}
 Consider a deterministic mass action system $(\G, \Lambda)$, and assume $\G$ is strongly endotactic. Then, $(\G, \Lambda)$ is permanent.
\end{theorem}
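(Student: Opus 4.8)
The plan is to establish permanence one positive stoichiometric compatibility class at a time, so I would fix $S_y=(y+S)\cap\RR^d_{>0}$ as in Definition~\ref{def:permanence} and construct a compact $K\subset S_y$ that eventually attracts every solution started in $S_y$. The starting point is Corollary~\ref{cor:lyapunov}, applied to $(\G,\Lambda)$ itself (with constant rate constants, so the $\theta$- and $t$-dependence is vacuous): it produces a compact $\Gamma\subset S_y$ with $\frac{d}{dt}U(z(t))<0$ whenever $z(t)\notin\Gamma$, and since $\frac{d}{dt}U(z(t))$ is the value at $z(t)$ of the continuous function $x\mapsto\sum_{y\to y'\in\R}\kappa_{y\to y'}x^y\ln(x^{y'-y})$, this already says that function is strictly negative on $S_y\setminus\Gamma$. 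I would then fix an $\varepsilon$ with $0<\varepsilon<\tfrac12\operatorname{dist}(\Gamma,\partial\RR^d_{\geq0})$ and put $\Gamma_\varepsilon:=\{x\in y+S:\operatorname{dist}(x,\Gamma)\leq\varepsilon\}$, which by the triangle inequality is a compact subset of $S_y$.

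The candidate attractor is the forward orbit closure $K:=\overline{\{z(t):z(0)\in\Gamma_\varepsilon,\ t\geq0\}}$. Lemma~\ref{lem:away_from_boundary} applied with $\Upsilon=\Gamma_\varepsilon$ shows this orbit is bounded by~\eqref{eq:unif_upper_bound} and uniformly bounded away from $\partial\RR^d_{\geq0}$ by~\eqref{eq:unif_lower_bound}, so $K$ is compact and $K\subset S_y$. Moreover $K$ is forward invariant: the set $\{z(t):z(0)\in\Gamma_\varepsilon,\,t\geq0\}$ is forward invariant by uniqueness of solutions, and since on the compact set $K\subset\RR^d_{>0}$ the polynomial mass-action vector field is Lipschitz and (by persistence and boundedness) solutions exist globally forward, the time-$s$ flow map is continuous, hence maps this dense subset into itself and therefore maps $K$ into $K$.

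It remains to show that every solution with $z(0)\in S_y$ enters $\Gamma_\varepsilon\subseteq K$ in finite time; together with forward invariance of $K$ this is exactly permanence. Fixing $z(0)\in S_y$: by Theorem~\ref{thm:persistence} there are $c_i>0$ with $z_i(t)\geq c_i$ for all $t\geq0$, and by Lemma~\ref{lem:away_from_boundary} with $\Upsilon=\{z(0)\}$ there is $R<\infty$ with $\|z(t)\|_\infty\leq R$ for all $t\geq0$, so $z(\cdot)$ is confined to the compact set $D:=\{x\in y+S:\,c_i\le x_i\ \forall i,\ \|x\|_\infty\leq R\}\subset S_y$. On the compact set $D\cap\{x:\operatorname{dist}(x,\Gamma)\geq\varepsilon\}\subseteq S_y\setminus\Gamma$ the continuous function $x\mapsto\sum_{y\to y'\in\R}\kappa_{y\to y'}x^y\ln(x^{y'-y})$ attains a strictly negative maximum, say $-\eta<0$; if $z(t)$ stayed at distance $\geq\varepsilon$ from $\Gamma$ for all $t\geq0$ it would remain in this set, giving $\frac{d}{dt}U(z(t))\leq-\eta$ and hence $U(z(t))\to-\infty$, contradicting $U\geq1$. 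So $\operatorname{dist}(z(T),\Gamma)<\varepsilon$ for some finite $T$, i.e.\ $z(T)\in\Gamma_\varepsilon\subseteq K$, and then $z(t)\in K$ for all $t\geq T$.

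I expect this proof to be short, because the two substantive inputs — the dissipativity estimate of Corollary~\ref{cor:lyapunov} and the uniform interior bounds of Lemma~\ref{lem:away_from_boundary} (the latter already encoding the reduction to strongly endotactic projected networks via Lemma~\ref{lem:projection}) — do all the real work. The one place that needs genuine care, and which I would flag as the main obstacle, is the packaging in the second step: one must take the orbit closure of a \emph{neighborhood} $\Gamma_\varepsilon$ of $\Gamma$ rather than of $\Gamma$ itself, so that the only thing the Lyapunov decay provides — that a solution eventually comes within $\varepsilon$ of $\Gamma$ — actually upgrades to ``enters $K$''; and one must perform the whole construction inside a compact subset of $\RR^d_{>0}$, so that uniqueness, global forward existence, and continuous dependence of solutions are available and forward invariance of $K$ is legitimate.
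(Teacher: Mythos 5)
Your proof is correct and follows essentially the same route as the paper's: both take the compact set $\Gamma$ from Corollary~\ref{cor:lyapunov}, enlarge it to a compact neighborhood inside $S_y\cap\RR^d_{>0}$, show every trajectory from $S_y$ must reach that neighborhood because $U$ is bounded below while its derivative is strictly negative off $\Gamma$, and then invoke Lemma~\ref{lem:away_from_boundary} on the neighborhood to obtain the compact attracting set. The only differences are cosmetic (you take the forward-orbit closure of $\Gamma_\varepsilon$ while the paper takes a coordinate box built from the uniform bounds $m,M$, and your hitting argument extracts a uniform decay rate $-\eta$ on a trajectory-dependent compact set instead of the paper's stopping-time/limsup argument), and your forward-invariance step is not even needed since a trajectory restarted in $\Gamma_\varepsilon$ lies in the orbit set by uniqueness of solutions.
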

\begin{proof}
Fix a set $S_y$ as in Definition \ref{def:permanence}, and let $\Gamma\subset S_y$ be as in Corollary \ref{cor:lyapunov} (the result applies if we consider the rates $\kappa_{y\to y'}(t,\theta)$  to be constant functions). Since $\Gamma\subset \RR^d_{>0}$, there exists $\varepsilon>0$ such that the enlarged set
$$\Upsilon=\{x\in S_y\,:\, \inf_{z\in\Gamma}\|x-z\|_\infty\leq\varepsilon\}\subset \RR^d_{>0}.$$
Moreover, note that $\Upsilon$ is a compact set and $\Gamma\subset\Upsilon$. Our first goal is to prove that every trajectory $\{z(t)\,:\,t\geq0\}$ with $z(0)\in S_y$ intersects $\Upsilon$. 

Let
$$\tau=\inf\left\{ t\geq 0\,:\,\frac{d}{dt}U(z(t))>0\right\}.$$  
If $\tau<\infty$, then by Corollary \ref{cor:lyapunov}, and since $\Gamma$ is compact, we have $z(\tau)\in\Gamma\subset \Upsilon$. Now suppose that $\tau=\infty$.  Since $U(\cdot)$ has a lower bound, the function $U$ can not decrease indefinitely along $z(\cdot)$.  Thus, we must have 
$$\limsup_{t\to\infty}\frac{d}{dt}U(z(t))\geq0.$$
Hence, by Corollary \ref{cor:lyapunov} and by compactness of $\Gamma$ the closure of $\{z(t)\,:\,t\geq0\}$ intersects $\Gamma$, which implies that $\{z(t)\,:\,t\geq0\}$ intersects $\Upsilon$. In conclusion, we have proved that every trajectory starting in $S_y$ intersects the compact set $\Upsilon$ at a certain finite time. Then, permanence follows from Lemma \ref{lem:away_from_boundary} by choosing
$$K=\{x\in S_y\,:\, \min_{1\leq i\leq d}x_i\geq m\text{ and }\|x\|_\infty\leq M\}$$
where
\begin{align*}
 m&=\min_{1\leq i\leq d}\inf_{z(0)\in\Upsilon}\inf_{t\geq0}z_i(t)>0\\
 M&=\sup_{z(0)\in\Upsilon}\sup_{t\geq0}\|z(t)\|_\infty<\infty.
\end{align*}
This concludes the proof.
\end{proof}

\section{Asiphonic Strongly Endotactic Networks and Large Deviation Principle}
\label{sec:LDP}

In this section, we  consider large deviations of classically scaled reaction networks. In particular, \reply{we focus on the results \cite{ADE:geometric, ADE:deviation}, which are derived from a sufficient condition that will be denoted as Assumption~\ref{4.2} later in this paper. We utilize the findings of section \ref{sec:tiers} to prove this condition holds} in a straightforward manner.

\reply{For convenience, throughout this section we denote $f(x)\approx g(x)$ or we say $f(x)$ grows like $g(x)$, if
\[
0<\lim_{\|x\|_1\to\infty} \frac{f(x)}{g(x)}<\infty.
\]}

Following \cite{K:stochastic,K:strong} we introduce the family of  classically scaled process indexed by a real number $V>0$.   In particular, we assume the process associated with $V$ is a stochastic mass action system with rate constant  $\kappa_{y\to y'}/V^{\|y\|_1 - 1}$, where $\kappa_{y\to y'}$ is a fixed positive constant.  Hence, for a particular choice of $V>0$, the intensity function for  $y\to y' \in \R$ is 
\[
	\lambda_{y\to y'}^{V}(x) = \frac{\kappa_{y\to y'}}{V^{\|y\|_1 - 1}} \mathbbm{1}_{\{x\geq y\}} \frac{x!}{(x-y)!}, \quad \text{ for } x \in \ZZ^d_{\ge 0}.
\]
We then denote the resulting stochastic process detailed in section \ref{sec:stochastic_model} by $X^V$.  Next, we  consider the scaled process
\begin{equation}\label{eq:56789998778}
\overline X^V(t)= V^{-1} X^V(t)\in V^{-1} \ZZ^d_{\ge 0}.
\end{equation}
The associated transition intensities for the process $\overline X^V$ are
\begin{equation}
\lambda^{S,V}_{y\to y'}(x)= \lambda^V_{y\to y'}(Vx) = \frac{\kappa_{y\to y'}}{V^{\|y\|_1-1}}\frac{(Vx)!}{(Vx-y)!}, \quad x \in V^{-1} \ZZ^d_{\ge 0},
\end{equation}
 and the generator  is 
\begin{equation}\label{4.1}
(\mathcal{L}_Vf)(x) =\sum_{y\to y' \in \R}\lambda^{S,V}_{y\to y'}(x)\left(f\left(x+\frac{y'-y}{V}\right)-f(x)\right), \quad x \in V^{-1} \ZZ^d_{\ge 0}.
\end{equation}

Following \cite{ADE:geometric, ADE:deviation}, we are interested in finding conditions for a reaction network to satisfy a large deviation principle (LDP).  By standard arguments, we see that for a fixed $x\in \RR^d_{>0}$ and $V$ large
\[
\lambda_{y\to y'}^{S,V}\left(\frac{\lfloor Vx \rfloor}{V}\right) =  \frac{\kappa_{y\to y'}}{V^{\|y\|_1-1}}\frac{(\lfloor Vx\rfloor)!}{(\lfloor Vx\rfloor -y)!}\approx  \frac{\kappa_{y\to y'}}{V^{\|y\|_1-1}}  V^{\|y\|_1} x^y = V\kappa_{y\to y'} x^y.
\]
Hence, we also define the analogous ``deterministic'' intensity function
\begin{equation}\label{eq:5678545678}
\lambda_{y\to y'}^{D,V}(x) = V \kappa_{y\to y'} x^y, \quad \text{ for } x \in \RR^d_{\ge 0}.
\end{equation}

 For completeness, we  provide the definition for a LDP in the setting of  reaction networks. \reply{Following the notations in \cite{ADE:deviation} and \cite{ADE:geometric}, we denote by $D_{0,T}(\RR^d_{>0})$ the Skorokhod space, or space of 
\textit{c$\grave{a}$dl$\grave{a}$g} functions $z: [0,T] \to \RR^d_{\geq 0}$, equipped with the topology of uniform convergence.
}
\begin{definition}
Fix a positive $T < \infty$ and a lower semi-continuous mapping $I: D_{0,T}(\RR^d_{>0})\to [0,\infty]$ such that for any $\alpha\in \RR_{>0}$, the level set $\{z:I(z)\leq \alpha\}$ is a compact subset of $D_{0,T}(\RR^d_{>0})$. The probability distribution of sample paths of the processes $\big\{\overline X^V\big\}_{V>0}$ with fixed initial condition $\overline X^V(0)= x\in \RR_{>0}^d$ obeys a LDP with good rate function $I(\cdot)$ if for any measurable $\Gamma \subset D_{0,T}(\RR_{>0}^d)$ we have
\begin{align*}
-\inf_{z\in \Gamma^o} I(z) &\leq \liminf_{V\to\infty}\frac{1}{V}\ln \left( P\left( \overline X^V(t)\in \Gamma \ \big| \ \overline X^V(0) = x \right)\right)\\
&\leq \limsup_{V\to\infty}\frac{1}{V}\ln \left( P \left(\overline X^V(t)\in \Gamma  \ \big| \ \overline X^V(0) = x \right) \right)\leq -\inf_{z\in\bar\Gamma}I(z)
\end{align*}
where $\Gamma^o$ and $\bar\Gamma$ denote the interior and closure of $\Gamma$ respectively.
\end{definition}
In \cite{ADE:deviation}, it is shown that under Assumption \ref{4.2} below, the process $\overline X^V$ satisfies a sample path LDP in the supremum norm. 
\begin{assumption}\label{4.2}
Let $\overline X^V$ be the process \eqref{eq:56789998778}. We assume
\begin{enumerate}
\item There exists $b< \infty$ and a continuous, positive function $U(\cdot)$ with compact sublevel sets, such that for some non-decreasing function $v': \RR_{>0} \to \RR_{>0}$,
\begin{equation}
(\mathcal{L}_V U^V)(x) \leq e^{bV} \qquad \forall V>v'(\|x\|_1), \qquad x\in V^{-1} \ZZ_{\geq 0}^d
\end{equation}
where $U^V(\cdot)$ denotes the $V^{th}$ power of $U(\cdot)$, and $\mathcal{L}_V$ is defined as in \eqref{4.1}.
\item With positive probability, starting at $\overline X^V(0)=0$, the Markov process $\overline X^V$ reaches in finite time some state $x_+$ in the strictly positive orthant $V^{-1} \ZZ_{> 0}^d$.
\end{enumerate}
\end{assumption}
Moreover, \cite{ADE:deviation} and \cite{ADE:geometric}  show that Assumption \ref{4.2} holds for reaction networks with a certain structure.  We require the following definition before stating their result.

\begin{definition}
A non-empty subset $\mathcal{P} \subset \S = \{S_1,\dots,S_d\}$ is called a \emph{siphon} if for every reaction $y\to y'\in \R$ the following condition holds:  if $y'_i >0$ for some $S_i \in \mathcal{P}$, then  $y_j >0$ for some $S_j \in \mathcal{P}$.  A reaction network is called \emph{asiphonic} if no such $\mathcal{P}$ exists. 
\end{definition}

In words, $\mathcal{P}$ is a siphon if every reaction whose product complex contains an element of $\mathcal{P}$ also has an element of $\mathcal{P}$ in its source complex.   Note that if a network is asiphonic, then $0 \in \mathcal{C}^S$ (the set of source complexes) for otherwise $\mathcal{S}$ would be a siphon.

\begin{theorem}\label{4.3}
    If the network is asiphonic and strongly endotactic (ASE), then the Markov process $\overline X^V$ satisfies Assumption \eqref{4.2} with $U$ defined as in \eqref{eq:lyapunov} (which is the usual Lyapunov function) 
and the function $v'(x)=e^x$.
\end{theorem}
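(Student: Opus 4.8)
The plan is to verify the two parts of Assumption~\ref{4.2} separately, the first being the substantive one.

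\emph{Part (2): reaching the positive orthant.} Here asiphonicity is used directly. Since $\mathcal{S}$ is not a siphon, $0\in\C^S$, so there is a reaction $0\to y'$ with $y'\neq0$. Given any state $x$ whose zero‑set $\mathcal{P}=\{S_i:x_i=0\}$ is non‑empty, asiphonicity produces a reaction $y\to\bar y$ with $\bar y_i>0$ for some $S_i\in\mathcal{P}$ and $y_j=0$ for every $S_j\in\mathcal{P}$, i.e. with $\operatorname{supp}(y)\subseteq\operatorname{supp}(x)$; firing reactions out of $0$ (and along the path already built) enough times to make the counts on $\operatorname{supp}(y)$ at least $y$‑large, one then fires $y\to\bar y$ and strictly enlarges the support. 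Iterating at most $d$ times reaches $V^{-1}\ZZ_{>0}^d$, and each stage has positive probability and finite expected duration, which gives part (2).

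\emph{Part (1): the generator bound.} Write $(\mathcal{L}_V U^V)(x)=\sum_{y\to y'\in\R}\lambda^{S,V}_{y\to y'}(x)\,(U(x')^V-U(x)^V)$ with $x'=x+(y'-y)/V$. Because $v'(x)=e^x$, only pairs with $\|x\|_1<\ln V$ matter, so $1/V\le x_i<\ln V$ at every nonzero coordinate. I would treat a bounded region $\{\|x\|_1\le K\}$ by a crude estimate: there $x$ ranges over a compact set, $\lambda^{S,V}_{y\to y'}(x)\le\kappa_{y\to y'}Vx^y\le C_KV$ and $U(x')/U(x)\le 1+C_K/V$, so $(\mathcal{L}_V U^V)(x)\le e^{b_0V}$ for a suitable $b_0=b_0(K)$ and all admissible $V$. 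The real task is to show that, after enlarging $K$, $(\mathcal{L}_V U^V)(x)\le 0$ on $\{\|x\|_1>K\}$; then part (1) holds with $b=b_0(K)$. On the far region the strategy is to push the $U^V$‑estimate down to the ``linear'' drift: since $\nabla U(x)=\ln(x)$ coordinatewise, $U(x')-U(x)=\tfrac1V\scal{y'-y}{\ln x}+O(1/V)$, so $(\mathcal{L}_V U)(x):=\sum_{y\to y'}\lambda^{S,V}_{y\to y'}(x)(U(x')-U(x))$ is asymptotically the deterministic drift $\sum_{y\to y'}\kappa_{y\to y'}x^y\ln(x^{y'-y})$. Applying convexity of $t\mapsto t^V$ reaction by reaction, and using that $\|x\|_1>K$ forces $U(x)$ to be large while all $|U(x')-U(x)|=O((\ln V)/V)$ (so the derivative $Vt^{V-1}$ changes only by a bounded factor $c>1$ across reactions), one obtains $(\mathcal{L}_V U^V)(x)\le V\bar u^{\,V-1}\big(c\,P(x)-N(x)\big)$, where $\bar u$ is comparable to $U(x)$ and $P(x),N(x)\ge0$ collect $\lambda^{S,V}_{y\to y'}(x)|U(x')-U(x)|$ over the reactions along which $U$ increases, respectively decreases.

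It then remains to prove $c\,P(x)-N(x)<0$ for $\|x\|_1$ large, which is exactly what the tier results deliver. By Theorem~\ref{thm:SE_tiers} the network is tier descending, and Lemma~\ref{lem:dominated_reaction} dominates each reaction with $y\tle y'$ by a reaction $y^\star\to y^{\star\star}$ with $y^{\star\star}\tl y^\star$ \emph{even after multiplying the two contributions by arbitrary positive constants}; combining this with $x^y\ln(x^{y'-y})<0$ when $y'\tl y$ and running the finite‑reactions argument of Proposition~\ref{prop:lyapunov} with the extra weight $c$ yields $c\,P(x)-N(x)<0$ along every transversal tier sequence, hence on $\{\|x\|_1>K\}$ for $K$ large. ``For $\|x\|_1$ large'' is made rigorous by contradiction: a sequence $(x_n,V_n)$ with $\|x_n\|_1\to\infty$, $V_n>e^{\|x_n\|_1}$ and $(\mathcal{L}_{V_n}U^{V_n})(x_n)>0$ would contain, by Remark~\ref{remark1}, a tier subsequence contradicting the previous bound. (Note that here one also uses $0\in\C^S$, which is automatic for asiphonic networks.)

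\emph{The main obstacle} is the deterministic–stochastic comparison near the boundary $\partial\RR^d_{\ge0}$, where $\nabla U$ is discontinuous and the discrete cutoff $\lambda^{S,V}_{y\to y'}(x)=0$ for $Vx\not\ge y$ spoils the $(1+o(1))$ comparison of $\lambda^{S,V}_{y\to y'}(x)$ with $\kappa_{y\to y'}Vx^y$. Two refinements are needed: the crude bound $|\scal{y'-y}{\ln x}|\le C\ln V$ is too weak to control the $O(1/V)$ error in $U(x')-U(x)$ and the second‑order term of $t\mapsto t^V$ at the same time, so the increments should instead be estimated sharply in terms of the vectors $\alpha_i$ furnished by Lemma~\ref{lem:logarithm_decomposition}; and a tier sequence with $\|x_n\|_1\to\infty$ need not be transversal (its direction may lie in $S^\perp$), so Proposition~\ref{prop:lyapunov} does not apply verbatim. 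For the latter I would invoke the projection Lemma~\ref{lem:projection}: non‑transversality forces the changes in the diverging coordinates to cancel reaction by reaction, and projecting away the coordinates whose logarithms stay comparable yields a strictly smaller strongly endotactic network, so the estimate follows by induction on $d$; alternatively, one checks directly that when a coordinate rests at the lattice floor $1/V$ its depleting reactions have suppressed rates while its replenishing reactions decrease $U$, which again forces $(\mathcal{L}_V U)(x)<0$. Once $(\mathcal{L}_V U^V)(x)\le0$ on $\{\|x\|_1>K\}$ is secured, part (1) holds with $v'(x)=e^x$, completing the proof.
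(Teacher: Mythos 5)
Your high-level skeleton (argue by contradiction along a sequence $(x_n,V_n)$ with $V_n>e^{\|x_n\|_1}$, compare $\lambda^{S,V}$ with the ``deterministic'' rates $V\kappa_{y\to y'}x^y$, Taylor-expand $U$, and then invoke the tier machinery of Theorem~\ref{thm:SE_tiers}, Lemma~\ref{lem:dominated_reaction} and Proposition~\ref{prop:lyapunov}) matches the paper's outline for Theorem~\ref{4.32}, and your sketch of part (2) of Assumption~\ref{4.2} is fine (the paper simply defers it to prior work). But there is a genuine gap at the crux of part (1): your reduction $(\mathcal{L}_V U^V)(x)\le V\bar u^{V-1}\bigl(c\,P(x)-N(x)\bigr)$ rests on the claim that $|U(x')-U(x)|=O((\ln V)/V)$ forces the derivative $Vt^{V-1}$ to vary only by a \emph{bounded} factor $c$ across a jump. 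That is false in exactly the regime the theorem must cover: $V$ is only bounded below by $e^{\|x\|_1}$, so a coordinate can sit at the lattice floor $x_i\sim 1/V$ with, say, $V=e^{\|x\|_1^2}$, and then $V|U(x')-U(x)|/U(x)\sim|\ln x_i|/U(x)\to\infty$ (the paper's own illustration $x_n=(e^{-n^2},n)$, $V_n=e^{n^2}$). In that regime the exponentiation cannot be linearized at all, and the correct bound is the one with factors $\exp\bigl((\ln(x^{y'-y})+c_1)/(c_2U(x))\bigr)-1$ appearing in Lemma~\ref{lemma:newtechnical}; the heart of the paper's proof is precisely the analysis of that expression via the low-abundance index set $E$ and the partition $\R_1,\R_2,\R_3$: reactions consuming $E$-species have prefactor $x^yU(x)$ small enough to kill the (possibly huge) exponential, reactions producing but not consuming $E$-species give negative terms of the much larger order $x^yU(x)$, only the remaining reactions are linearized, and asiphonicity (via $0\in\C^S$) guarantees both $\R_1\neq\R$ and that a tier-descending reaction with non-vanishing monomial survives. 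None of this is supplied by your proposed fixes: decomposing $\ln x_n$ via the $\alpha_i$ of Lemma~\ref{lem:logarithm_decomposition} does not shrink the exponent, and the projection-plus-induction idea addresses a different issue (directions in $S^\perp$), not the blow-up of $|\ln x_i|$ relative to $U(x)$.

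A second, smaller omission is the handling of genuinely zero coordinates: the paper restricts to the set $\mathcal{P}$ of reactions with strictly positive stochastic rate along the sequence, proves the two-sided comparison $c_yV\kappa_{y\to y'}x_n^y\le\lambda^{S,V_n}_{y\to y'}(x_n)\le V\kappa_{y\to y'}x_n^y$ only for those, replaces the zero coordinates by $\alpha/V_n$ to obtain a strictly positive sequence $\tilde x_n$ to which Lemma~\ref{lemma:newtechnical} applies, and then shows separately that the reactions outside $\mathcal{P}$ contribute negligibly. Your proposal names this boundary problem but leaves it as a ``refinement,'' with only the vague alternative that ``replenishing reactions decrease $U$,'' which gestures at the $\R_2$ mechanism without the quantitative comparison ($U(x_n)\gg\ln(x_n^{y'-y})$ for $\R_3$ reactions) that makes the domination work. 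As written, the proposal therefore does not prove the generator bound; the missing content is essentially all of Lemma~\ref{lemma:newtechnical} and the $\tilde x_n$/$\mathcal{P}$ construction in the proof of Theorem~\ref{4.32}.
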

Note that there is a simple argument showing that asiphonic reaction networks automatically satisfy the second part of Assumption \ref{4.2}  (see Remark 1.11 in \cite{ADE:deviation}). It is significantly harder to show ASE reaction networks satisfy  the first condition in Assumption \ref{4.2}. Here we will provide a proof  showing that ASE reaction networks satisfy the first condition of Assumption \ref{4.2}, and will do so using a tier structure argument. Specifically, we will prove Theorem \ref{4.32} below, which  implies Theorem \ref{4.3}, and is the main result of this section.
\begin{theorem}\label{4.32}
Suppose the reaction network $(\S,\C,\mathcal R)$ is ASE. Furthermore, let $U$ be defined as in \eqref{eq:lyapunov} and let $v'(x)=e^x$. Then there exists a compact set $B\subset \RR^d$ such that  for all pairs $(V,x)$ satisfying  $V>v'(\|x\|_1) = e^{\|x\|_1}$, $x\in V^{-1}\ZZ^d_{\geq 0}$, and $x \in B^c$, we have
\begin{equation}
(\mathcal{L}_V U^V)(x) < 0.
\end{equation}
\end{theorem}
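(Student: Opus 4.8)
The plan is to establish the contrapositive by a compactness argument that reduces the claim to the deterministic Lyapunov estimate of Proposition~\ref{prop:lyapunov}. Assume no such compact set exists; then for every $n\in\ZZ_{>0}$ there is a pair $(V_n,x_n)$ with $V_n>e^{\|x_n\|_1}$, $x_n\in V_n^{-1}\ZZ^d_{\ge0}$, $\|x_n\|_\infty\ge n$, and $(\mathcal{L}_{V_n}U^{V_n})(x_n)\ge0$. Since $x_n\ge0$, the constraint gives $V_n>e^{\|x_n\|_1}\ge e^{\|x_n\|_\infty}\to\infty$; moreover $\|x_n\|_1<\ln V_n$, so every nonzero coordinate satisfies $1/V_n\le x_{n,i}<\ln V_n$ and hence $|\ln x_{n,i}|\le\ln V_n$ for $n$ large, while $U(x_n)\to\infty$ because $\|x_n\|_\infty\to\infty$. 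This two-sided control of the nonzero coordinates is exactly what the hypothesis $V>e^{\|x\|_1}$ provides. Put $\hat x_n:=x_n\vee(1/V_n)\in\RR^d_{>0}$, which agrees with $x_n$ except in the coordinates where $x_{n,i}=0$; by Remark~\ref{remark1}, after passing to a subsequence we may assume $(\hat x_n)_{n=0}^\infty$ is a tier sequence. Because the network is strongly endotactic, it is tier descending by \thmref{thm:SE_tiers}, and because it is asiphonic, $0\in\C^S$.

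Next I would relate $(\mathcal{L}_{V_n}U^{V_n})(x_n)$ to the deterministic Lyapunov derivative evaluated along $(\hat x_n)$. Writing $\xi=y'-y$ for a reaction $y\to y'$ and expanding $U^{V_n}(x_n+\xi/V_n)-U^{V_n}(x_n)$ to second order — using $\nabla U(x)\cdot\xi=\ln(x^\xi)$, the diagonal Hessian $\mathrm{Hess}\,U(x)=\mathrm{diag}(1/x_i)$, the factorization $\lambda^{S,V_n}_{y\to y'}(x_n)=V_n\kappa_{y\to y'}x_n^{y}\prod_i\prod_{j=0}^{y_i-1}(1-j/(V_nx_{n,i}))$ whose last factor lies in a fixed compact subinterval of $(0,1]$ and tends to $1$ when $V_nx_{n,i}\to\infty$, and the elementary estimate $(1+a/V)^V=e^a(1+o(1))$ for $|a|=O(\ln V)$ — one checks, using the coordinate bounds of the first paragraph to render the Hessian remainders, the factorial discretization, the exponential distortion $e^{\ln(x_n^\xi)/U(x_n)}-1$, and the reactions not enabled at $x_n$ all negligible (the regularization $x_n\mapsto\hat x_n$ being unavoidable because of the coordinates where $x_{n,i}=0$), that
\[
(\mathcal{L}_{V_n}U^{V_n})(x_n)=V_nU^{V_n-1}(x_n)\Big(\sum_{y\to y'\in\R}\kappa_{y\to y'}\,\hat x_n^{\,y}\ln(\hat x_n^{\,y'-y})+o(1)\Big),
\]
where the ``increasing'' reactions (those with $\hat x_n^{\,y'-y}\not\to0$) are absorbed — via \lemref{lem:dominated_reaction} — into strictly larger negative contributions coming from the maximal source tier $T^{1,S}_{(\hat x_n)}$. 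Since $U^{V_n-1}(x_n)>0$, the assumption $(\mathcal{L}_{V_n}U^{V_n})(x_n)\ge0$ would force $\sum_{y\to y'}\kappa_{y\to y'}\hat x_n^{\,y}\ln(\hat x_n^{\,y'-y})$ to stay bounded below in $n$.

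But $(\hat x_n)_{n=0}^\infty$ is a tier sequence that one verifies can be taken transversal — here the confinement of $\overline X^V$ to a single nonnegative stoichiometric compatibility class is used, allowing one to restrict to $x_n$ in the class of the origin (which exists since $0\in\C^S$) and then to invoke \lemref{lem:proper_transversal} after one further subsequence — so, the network being tier descending with $0\in\C^S$, Proposition~\ref{prop:lyapunov} gives $\sum_{y\to y'}\kappa_{y\to y'}\hat x_n^{\,y}\ln(\hat x_n^{\,y'-y})\to-\infty$, the desired contradiction. This yields the claim.

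The step I expect to be the main obstacle is the reduction of the second paragraph: controlling, uniformly in $n$, the gap between the discrete action of the generator on the $V_n$-th power $U^{V_n}$ and $V_nU^{V_n-1}(x_n)$ times the deterministic Lyapunov derivative, in the regime where $V_n$ may be astronomically larger than $\|x_n\|$. The hypothesis $V>e^{\|x\|_1}$ is the decisive device: it forces $|\ln x_{n,i}|\le\ln V_n$, taming the logarithmic increments of $U$ and the exponents $\ln(x_n^{y'-y})/U(x_n)$, and $x_{n,i}\ge1/V_n$, taming the Hessian remainders and the factorial discretization factors, so that the deterministic Lyapunov derivative governs the leading-order behaviour, where Proposition~\ref{prop:lyapunov} and \lemref{lem:dominated_reaction} apply.
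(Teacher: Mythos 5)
Your overall skeleton (argue by contradiction along a sequence $(x_n,V_n)$, regularize the zero coordinates to order $1/V_n$, pass to a tier subsequence, bound stochastic intensities by deterministic ones, and reduce to showing a reaction-indexed sum diverges to $-\infty$) matches the paper's proof. However, the central step of your second paragraph contains a genuine gap: the claimed identity
\[
(\mathcal{L}_{V_n}U^{V_n})(x_n)=V_nU^{V_n-1}(x_n)\Big(\sum_{y\to y'\in\R}\kappa_{y\to y'}\,\hat x_n^{\,y}\ln(\hat x_n^{\,y'-y})+o(1)\Big)
\]
is false in the regime you yourself flag as the obstacle. The hypothesis $V>e^{\|x\|_1}$ is only a \emph{lower} bound on $V$, so $\ln V_n$ can be arbitrarily larger than $U(x_n)\approx\|x_n\|_1\ln\|x_n\|_1$; consequently, for coordinates of size $\hat x_{n,i}\sim 1/V_n$ the exponent $\ln(\hat x_n^{\,y'-y})/U(\hat x_n)$ does not tend to $0$ and the distortion $\exp\big(\ln(\hat x_n^{\,y'-y})/(cU(\hat x_n))\big)-1$ cannot be linearized: it may blow up, or saturate at $-1$, in which case the true generator term is of order $-\hat x_n^{\,y}U(\hat x_n)$ while the deterministic term $\hat x_n^{\,y}\ln(\hat x_n^{\,y'-y})$ is of order $-\hat x_n^{\,y}\ln V_n$, vastly more negative. (Take $\hat x_n=(1/V_n,\,n)$ with $\ln V_n\gg n\ln n$ and a reaction producing the first species from a source supported on the second.) So $(\mathcal{L}_{V_n}U^{V_n})(x_n)\ge 0$ does not force the deterministic Lyapunov sum to be bounded below, and Proposition~\ref{prop:lyapunov} alone cannot deliver the contradiction. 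This is exactly the difficulty the paper isolates in Lemma~\ref{lemma:newtechnical}: it introduces the set $E$ of indices with $\lim_n \ln(x_{n,i})/U(x_n)<0$, splits $\R$ into reactions that consume, produce-but-not-consume, or avoid species in $E$, keeps the exponential intact, shows the first group contributes $o(1)$, the second contributes terms bounded above by $-c\,x_n^yU(x_n)$, and only linearizes on the third group, then uses asiphonicity ($0\in\C^S$) together with \thmref{thm:SE_tiers} and \lemref{lem:dominated_reaction} to produce a term diverging to $-\infty$ and to dominate the positive ones.

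A second, related gap is your transversality step. To invoke Proposition~\ref{prop:lyapunov} you need a transversal tier sequence, and you propose to get it from \lemref{lem:proper_transversal} after ``restricting to the compatibility class of the origin.'' But the contradiction sequence must be allowed to range over all of $V_n^{-1}\ZZ^d_{\ge0}$ for varying $V_n$ (the theorem quantifies over all such $x$), so the $x_n$ do not lie in a common lattice, let alone a common stoichiometric compatibility class, and the regularization $x_n\mapsto\hat x_n$ destroys properness in any case; hence neither properness nor transversality is available. The paper sidesteps this entirely: Lemma~\ref{lemma:newtechnical} does not route through Proposition~\ref{prop:lyapunov} or transversality, but uses $0\in\C^S$ (from asiphonicity) to guarantee $x_n^y$ is bounded below for $y\in T^{1,S}_{(x_n)}$ and then extracts the divergent negative term directly. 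To repair your proposal you would essentially have to reprove the content of Lemma~\ref{lemma:newtechnical}, i.e.\ treat the exponential nonlinearity for the low-abundance species rather than absorbing it into an $o(1)$.
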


Before getting to the proof of the Theorem, we need a preliminary technical result which we prove using the tier sequence technique. \reply{Later when we prove the theorem, it will be apparent that the term $H(x_n,V_n)$ in Lemma \ref{lemma:newtechnical} determines the sign of $(\mathcal{L}_V U^V)(x)$.}

\begin{lemma}\label{lemma:newtechnical}
Suppose that there is a  sequence $(x_n,V_n)_{n=0}^\infty$ such that:
\begin{align}
&\bullet \quad (x_n)_{n=0}^\infty\quad  \text{ is a tier sequence}\\
&\bullet \quad  \lim_{n\to\infty}\|x_n\|_1=\infty \label{5:condition1newnew}\hspace{3in}\phantom{.}\\
&\bullet \quad V_n> e^{\|x_n\|_1} \quad \text{ and } \quad x_n \in V_n^{-1}\ZZ^d_{> 0}.\label{5:condition2newnew}
\end{align}
Let  $c_1\in \RR$ and $c_2\in \RR_{>0}$ and let
\begin{align}\label{eq:H_SUM}
H(x_n,V_n) = \sum_{y\to y'\in \mathcal{R}}\kappa_{y\to y'} x_n^y U(x_n)\bigg(\exp\bigg(\frac{ \ln (x_n^{y'-y})+ c_1}{c_2 U(x_n)}\bigg) -1\bigg).
\end{align}
Then
\begin{align}\label{eq:567898900}
	\liminf_{n\to \infty} H(x_n,V_n) = -\infty.
\end{align}
\end{lemma}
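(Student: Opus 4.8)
The plan is to show that, along a suitable subsequence, $H(x_n,V_n)$ agrees up to a vanishing error with a sum of the same shape as the logarithmic Lyapunov sum of Proposition~\ref{prop:lyapunov}, and then to conclude with the tier--descending machinery of Section~\ref{sec:lyapunov}. First I would record two elementary facts. Since $\lim_n\|x_n\|_1=\infty$, some coordinate of $x_n$ diverges, so $U(x_n)\to\infty$; moreover, using $u(a)\ge a$ for $a\ge e^2$ and $u(a)\le a\ln a$ for $a\ge 1$, one gets $\tfrac12\|x_n\|_1\le U(x_n)\le C\pr{1+\|x_n\|_1\ln(\|x_n\|_1\vee e)}$ for $n$ large and a constant $C=C(d)$, so $U(x_n)$ grows only polynomially in $\|x_n\|_1$. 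Writing $\theta_n:=1/(c_2U(x_n))\to 0$, $f_n(y,y'):=\dfrac{\ln(x_n^{y'-y})+c_1}{c_2U(x_n)}$ and $T_n(y,y'):=\kappa_{y\to y'}x_n^y\,U(x_n)\pr{e^{f_n(y,y')}-1}$, we have $H(x_n,V_n)=\sum_{y\to y'\in\mathcal R}T_n(y,y')$, and I would use repeatedly the identity
\[
x_n^y\,e^{f_n(y,y')}=e^{c_1\theta_n}\,x_n^{(1-\theta_n)y+\theta_n y'}.
\]
Passing to a subsequence, I would then assume that each coordinate $x_{n,l}$ converges in $[0,\infty]$ and each $f_n(y,y')$ converges in $[-\infty,\infty]$.

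Next I would dispose of the two extreme types of reaction. If $f_n(y,y')\to-\infty$ then $e^{f_n(y,y')}-1\to-1$, so $T_n(y,y')<0$ eventually, which is harmless. If $f_n(y,y')\to+\infty$ then, because $U(x_n)\ge\tfrac12\|x_n\|_1$, $\ln(x_n^{y'-y})$ eventually exceeds any multiple of $\|x_n\|_1$; writing $\ln(x_n^{y'-y})=\sum_j(y'_j-y_j)\ln x_{n,j}$ and noting that the coordinates with $x_{n,j}\ge 1$ contribute only $O(\ln\|x_n\|_1)$, there must be a coordinate $l$ with $y_l>y'_l\ge 0$ (hence $y_l\ge 1$), $x_{n,l}\to 0$ and $\ln(1/x_{n,l})\gg\|x_n\|_1$, so $x_{n,l}\le e^{-\|x_n\|_1}$ eventually. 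Using the displayed identity and $(1-\theta_n)y+\theta_n y'\ge 0$, the factor $x_n^{(1-\theta_n)y+\theta_n y'}$ is at most $(\|x_n\|_1\vee1)^{dM}x_{n,l}^{(1-\theta_n)y_l}$ (for a network--dependent $M$), whose exponent on $x_{n,l}$ tends to $y_l\ge1$; since $x_{n,l}^{1/2}\le e^{-\|x_n\|_1/2}$ decays faster than any fixed power of $\|x_n\|_1$ while $U(x_n)$ grows polynomially, we conclude $T_n(y,y')\to 0$. I expect this step --- where the hypothesis $V_n>e^{\|x_n\|_1}$ (equivalently $x_{n,l}\ge 1/V_n$) together with the polynomial growth of $U$ is essential --- to be the main obstacle.

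Finally I would treat the reactions for which $f_n:=f_n(y,y')$ stays bounded. For these, putting $g_n(y,y'):=(e^{f_n}-1)/f_n$ (and $:=1$ if $f_n=0$) gives a quantity that converges and stays in a fixed compact subinterval of $(0,\infty)$, and
\[
T_n(y,y')=\tfrac{1}{c_2}\,\kappa_{y\to y'}\,g_n(y,y')\;x_n^y\pr{\ln(x_n^{y'-y})+c_1}.
\]
Thus, up to the $o(1)$ terms handled above, $H(x_n,V_n)$ is a sum of exactly the type treated in Proposition~\ref{prop:lyapunov}, with the fixed positive rate constants replaced by the convergent, uniformly positive and bounded quantities $\kappa_{y\to y'}g_n(y,y')/c_2$ and with the harmless real shift $c_1$. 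I would then rerun that proof: the tier sequence $(x_n)$ is transversal (for the proper tier sequences to which the lemma is applied this is Lemma~\ref{lem:proper_transversal}), hence tier descending since the network is strongly endotactic (Theorem~\ref{thm:SE_tiers}); reactions with $y'\tl y$ contribute terms that are negative for $n$ large; reactions with $y\tle y'$ are controlled, together with a fraction of a strictly descending reaction, by Lemma~\ref{lem:dominated_reaction}, whose inequality \eqref{eq:dominated_reaction_negative} holds for \emph{any} positive coefficients and \emph{any} real $c_3,c_4$ and therefore absorbs both the factors $g_n$ and the constant $c_1$; and, because the network is asiphonic, $0\in\mathcal C^S$, so $0\tle y$ for every $y\in T^{1,S}_{(x_n)}$ and hence $x_n^y\ge c>0$ for such $y$, whence the strictly descending reaction out of $T^{1,S}_{(x_n)}$ furnished by the tier--descending property contributes a term tending to $-\infty$ (since then $\ln(x_n^{y'-y})\to-\infty$, or directly from $U(x_n)\to\infty$ when its $f_n\to-\infty$; cf.\ \eqref{eq:dominated_reaction_infinity}). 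As there are finitely many reactions this yields $\sum_{y\to y'}T_n(y,y')\to-\infty$, i.e.\ $\liminf_n H(x_n,V_n)=-\infty$. The only care needed beyond Proposition~\ref{prop:lyapunov} is that the coefficients $g_n(y,y')$ vary with $n$; this is immaterial, because they are uniformly bounded away from $0$ and $\infty$ and the cited statements hold for arbitrary positive coefficients.
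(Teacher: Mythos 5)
Your route is genuinely different from the paper's: you classify reactions by the limiting behaviour of the exponent $f_n(y,y')=(\ln(x_n^{y'-y})+c_1)/(c_2U(x_n))$ and try to reduce the bounded-exponent part of $H$ to the linearized sum of Proposition~\ref{prop:lyapunov}, whereas the paper partitions $\R$ by the set $E$ of species whose logarithms are of order $-U(x_n)$ (reactions consuming such a species, producing but not consuming one, touching none), shows the first group vanishes, bounds the second by $-c\,x_n^yU(x_n)$, and linearizes only the third. Your treatment of the $f_n\to+\infty$ terms (a source coordinate must satisfy $x_{n,l}\le e^{-\|x_n\|_1}$, so the term tends to $0$) is a correct substitute for the paper's $\R_1$ analysis, and your use of asiphonicity plus the tier-descending property to produce a term tending to $-\infty$ matches the paper.

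The final step, however, is not as automatic as you claim. After discarding the $f_n\to\pm\infty$ terms, $H$ is \emph{not} ``exactly'' a sum of the type in Proposition~\ref{prop:lyapunov}: the $f_n\to-\infty$ terms remain in $H$, but only with magnitude of order $x_n^{y^\star}U(x_n)$, which is much smaller than the linearized magnitude $x_n^{y^\star}\left|\ln(x_n^{y^{\star\star}-y^\star})\right|$ that Lemma~\ref{lem:dominated_reaction} actually controls. Inequality \eqref{eq:dominated_reaction_negative} pairs a positive term with the \emph{linearized} expression of the dominating reaction; if the dominating reaction $y^\star\to y^{\star\star}$ handed to you by that lemma lies in your $f_n\to-\infty$ class, the inequality does not by itself beat a positive bounded-exponent term of size comparable to $x_n^{y^\star}U(x_n)$ (which can occur when $y\te y^\star$ and $f_n(y,y')$ converges to a strictly positive limit). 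The argument does close, but only with an extra case distinction you have not made: if the exponent of a positive term does not tend to $0$, then $\ln(x_n^{y'-y})\gg\|x_n\|_1$ and your own argument from the $+\infty$ case forces an exponentially small coordinate in its source, so that term tends to $0$; if the exponent tends to $0$, the term is $o\bigl(x_n^{y^\star}U(x_n)\bigr)$ and is beaten by the actual contribution $\approx-\kappa x_n^{y^\star}U(x_n)$ of the descending reaction. The coefficient trick with $g_n$ is fine only when both paired terms are in the bounded class, so ``the only care needed is that $g_n$ varies with $n$'' is an overstatement.

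Separately, your justification of transversality does not apply: the lemma assumes only a tier sequence with $\|x_n\|_1\to\infty$ and $x_n\in V_n^{-1}\ZZ^d_{>0}$; it is not assumed proper, and the sequence $(\tilde x_n)$ to which the lemma is applied in the proof of Theorem~\ref{4.32} is not proper either, so Lemma~\ref{lem:proper_transversal} is not available. This is not a formality: both your argument and the paper's extract the crucial descending reaction (and invoke Lemma~\ref{lem:dominated_reaction}) from the tier-descending property, which a strongly endotactic network guarantees only for \emph{transversal} tier sequences. For a non-transversal sequence the conclusion can genuinely fail; for instance, for the asiphonic, strongly endotactic network $0\rightleftarrows A+B\rightleftarrows 2A+2B$ and $x_n\approx(n,1/n)$ (adjusted to lie in $V_n^{-1}\ZZ^2_{>0}$), every term of $H(x_n,V_n)$ stays bounded. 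The paper's own proof is silent on this point, so you are not worse off than the source, but you should not present Lemma~\ref{lem:proper_transversal} as covering it; some additional argument establishing transversality (or an added hypothesis) is needed wherever the lemma is used.
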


\begin{proof}
%

Note that $U(x_n)$ grows like $\|x_n\|_1\ln( \| x_n\|_1)$, as $n \to \infty$, which itself converges to $\infty$ by \eqref{5:condition1newnew}.
Thus it must be that $\limsup_{n\to \infty} \frac{\ln (x_{n,i})}{U(x_n)} \le 0$ for each $i\in \{1,\dots,d\}$. Let us consider the set of indices 
\begin{align*}
E=\left\{i: \liminf_{n\to \infty} \frac{\ln (x_{n,i})}{U(x_n)} < 0\right\}.
\end{align*}
The set $E$ can be non-empty, and consists of the indices of those species which are  in relatively low abundance.  \reply{To make this notion more concrete, we illustrate it with an example that is not part of the proof.  Consider a two-dimensional system with $x_n = (e^{-n^2},n)$ and $V_n= e^{n^2}$.   In this case, $\ln(x_{n,1}) = -n^2$ whereas $U(x_n)$ grows like $n\ln(n)$ as $n\to \infty$.
Thus, $\lim_{n \to \infty} \frac{\ln(x_{n,1})}{U(x_n)} = -\infty$ and $1 \in E$.}

\reply{We now return to the proof.}  By potentially  considering another subsequence, we may replace all the $\liminf$ and $\limsup$ by $\lim$ in the above. Using $E$, we can partition the set of reactions $\R$ into 3 mutually exclusive groups: \reply{those that consume a species in $E$, those that produce a species in $E$ but do not consume one, and those that neither consume nor produce a species in $E$.  Specifically, we let}
\begin{itemize}
\item $\R_1=\{y\to y': y_i\neq 0 \ \text{for some} \ i\in E\}$,
\item $\R_2=\{y\to y': y_i=0 \ \forall i\in E \ \text{and} \ y'_i\neq 0 \ \text{for some} \ i\in E\}$, and
\item $\R_3=\{y\to y': y_i=y'_i=0 \ \forall i\in E\}$.
\end{itemize}
Note that because the network is asiphonic, $0 \in \C^S$.  Hence, $\R_1 \ne \R$.
We then decompose $H$ in the obvious manner as  $H(x_n,V_n)=H_1(x_n,V_n)+H_2(x_n,V_n)+H_3(x_n,V_n),$ where 
\begin{align*}
H_i(x_n,V_n)= \sum_{y\to y'\in \R_i}\kappa_{y\to y'} x_n^y U(x_n)\bigg(\exp\bigg(\frac{ \ln (x_n^{y'-y})+ c_1}{c_2U(x_n)}\bigg) -1\bigg).
\end{align*}
\reply{
We will prove that reactions from $\R_1$ give insignificant contribution to the network dynamics along a tier sequence, while there are ``good'' reactions in $\R_2$ and $\R_3$ that help stabilizing the dynamics by consuming species with high abundance}.
Specifically, we will show that (i) $\lim_{n\to \infty} H_1(x_n,V_n) = 0$,  (ii) the terms in $H_2$ are negative, and (iii)
the negative terms in $H_2$ and $H_3$ are  sufficient to guarantee that \eqref{eq:567898900} holds.

We turn to $H_1(x_n,V_n)$. First note that for $y\to y' \in\R_1$, we have that 
\[
\ln (x_n^{y' - y}) = \scal{y'}{\ln(x_n)} - \scal{y}{\ln(x_n)} \le c_3\sum_{i \in E} |\ln(x_{n,i})| = - c_3 \sum_{i\in E} \ln(x_{n,i}),
\]
for some positive constant $c_3$.
Hence, there is a $c_4>0$ so that for $n$ large enough 
\begin{align}
\label{eq:6789876new}
\begin{split}
x_n^y U(x_n) \exp\bigg(&\frac{ \ln (x_n^{y'-y})+c_1}{c_2U(x_n)}\bigg) \leq x_n^y U(x_n) \exp\bigg(-\frac{c_4 \sum_{i\in E} \ln (x_{n,i})}{U(x_n)}\bigg)\\
&= \exp\bigg(\sum_{i=1}^d y_i\ln (x_{n,i}) +\ln (U(x_n))- \frac{\sum_{i\in E}  c_4\ln (x_{n,i})}{U(x_n)} \bigg) \\
&=\exp\bigg(\sum_{i\in E}\ln (x_{n,i})\bigg(y_i-\frac{c_4}{U(x_n)}\bigg) + \sum_{j\notin E} y_j\ln (x_{n,j}) +\ln (U(x_n))\bigg).
\end{split}
\end{align}
Note that from the construction of $E$, for $i\in E$ and $j\notin E$, we must have $|\ln (x_{n,i})|\gg \ln (U(x_n))$ and $|\ln (x_{n,i})|\gg \ln (x_{n,j})$. Since $y_i\geq 1$ for some $i\in E$, we must have
\begin{align*}
\lim_{n\to\infty}\sum_{i\in E}\ln (x_{n,i})\bigg(y_i-\frac{c_4}{U(x_n)}\bigg) + \sum_{j\notin E} y_j\ln (x_{n,j}) +\ln (U(x_n))=-\infty.
\end{align*}
Moreover, by a similar argument we see that for $y\to y' \in \R_1$
\begin{equation}\label{eq:678897344}
\lim_{n\to \infty} x_n^{y} U(x_n) = \lim_{n\to \infty}  \exp\bigg(\sum_{i\in E}y_i \ln (x_{n,i}) + \sum_{j\notin E} y_j\ln (x_{n,j}) +\ln (U(x_n))\bigg) = 0.
\end{equation}
\reply{By combining all of the above it follows that} for each $y\to y'\in \R_1$
\begin{align*}
\lim_{n\to\infty}x_n^y U(x_n)&\bigg(\exp\bigg(\frac{ \ln (x_n^{y'-y})+c_1}{c_2U(x_n)}\bigg) -1\bigg) = 0
\end{align*}
and so $\lim_{n\to\infty}H_1(x_n,V_n)=0$.

Next, we consider $H_2(x_n,V_n)$. Let $y\to y' \in \R_2$.  We know that $y_j = 0$ for all $j \in E$ and that   there exist an $i\in E$ with $y'_i >0$.  Hence, using that $\lim_{n\to \infty} U(x_n) = \infty$ and  the definition of $E$, we have
\begin{align*}
\exp\bigg(\frac{\ln (x_n^{y'-y}) +c_1}{c_2U(x_n)}\bigg)-1 &=\exp\bigg(\frac{\sum_{i\in E} y_i'\ln (x_{n,i}) + \sum_{j\notin E}(y'_j-y_j)\ln (x_{n,j})+c_1}{c_2U(x_n)}\bigg)-1\\
&<e^{-c_5}-1< -c_6<0
\end{align*}
for some positive constants $c_5$ and $c_6$ and $n$ large enough. Thus
\begin{equation}\label{eq:9999new}
H_2(x_n,V_n)< -c_6\sum_{y\to y'\in \R_2}\kappa_{y\to y'} x_n^y U(x_n).
\end{equation}

We turn to  $H_3(x_n,V_n)$. Let $y\to y' \in \R_3$.  Since $y_i=y_i'=0$ for all $i\in E$, we have by the definition of $E$ that
\[
\lim_{n\to \infty}\frac{\ln (x_n^{y'-y}) +c_1}{c_2U(x_n)}=0.
\]
Note that we can choose a subsequence for which each term on the left above is either non-negative or non-positive for each $n$ and each $y\to y' \in \R_3$. 
If the terms are non-positive, we may use that $e^\rho - 1 \le \frac12 \rho$ for small $\rho \le 0$ to conclude that
\begin{align}\label{eq:nlkdajfkl;jf;akj}
\kappa_{y\to y'}& x_n^y U(x_n)\bigg(\exp\bigg(\frac{ \ln (x_n^{y'-y})+c_1}{c_2U(x_n)}\bigg) -1\bigg)\le \frac1{2c_2} \kappa_{y\to y'}x_n^y(\ln (x_n^{y'-y}) +c_1).
\end{align}
Moreover, if the terms are non-negative, we use that $e^\rho - 1 \le 2\rho$ for small $\rho \ge 0$ to conclude that 
\begin{align}\label{eq:678987654678}
\kappa_{y\to y'}& x_n^y U(x_n)\bigg(\exp\bigg(\frac{ \ln (x_n^{y'-y})+c_1}{c_2U(x_n)}\bigg) -1\bigg)\le \frac{2}{c_2} \kappa_{y\to y'}x_n^y(\ln (x_n^{y'-y}) +c_1).
\end{align}
Thus, there are positive constants $c_{y\to y'}$  for which
\begin{align}\label{eq:dfjaj;kd;afjnew}
H_3(x_n,V_n) &\leq \sum_{y\to y'\in \R_3} c_{y\to y'} \kappa_{y\to y'}x_n^y(\ln (x_n^{y'-y}) +c_1).
\end{align}

Finally, we return to $H(x_n,V_n)=H_1(x_n,V_n)+H_2(x_n,V_n)+H_3(x_n,V_n)$. 
To conclude that \eqref{eq:567898900} holds, it is now sufficient to show two things.  First,  we will prove that there is always a term in either \eqref{eq:9999new} or \eqref{eq:dfjaj;kd;afjnew} (i.e., terms associated with reactions in $\R_2$ or $\R_3$) that goes to $-\infty$, as $n\to \infty$. Second, we will prove that any positive term in the sum \eqref{eq:H_SUM} is dominated, in the sense of Lemma \ref{lem:dominated_reaction}, by a negative term.

Since the network is asiphonic, there must be a reaction for which  $0$ is the source complex. By definition of $T^{1,S}$ we have $0 \tle y$ for all $y\in T^{1,S}$, which implies that for all $y\in T^{1,S}$
\begin{equation}\label{eq:44444new}
\lim_{n\to\infty}x_n^y >0.
\end{equation}
Since the network is strongly endotactic it must be tier descending by Theorem \ref{thm:SE_tiers}. Hence there exists a reaction $y\to y' \in \R$ with $y\in T^{1,S}$ and $y'\tl y$. Recall that \eqref{eq:678897344} showed that $x_n^yU(x_n) \to 0$, as $n \to \infty$, if $y \to y' \in \R_1$.  Hence,  \eqref{eq:44444new} shows that  $y\to y' \notin \R_1$.
 If $y\to y'\in \R_2$, we consider the relevant term in \eqref{eq:9999new} and conclude 
\[
\lim_{n\to \infty} -c_6\kappa_{y\to y'} x_n^y  U(x_n) = -\infty
\]
due to the fact that $\lim_{n\to \infty}U(x_n)=\infty$. Finally, if $y\to y'\in \R_3$, we have 
\[
\lim_{n\to \infty} c_{y\to y'} \kappa_{y\to y'} x_n^y(\ln (x_n^{y'-y})+c_1) =-\infty
\]
since $y'\tl y$.  Thus, in either case, we have a term which converges to $-\infty$ as $n\to \infty$.

Next, we will show that a positive term is necessarily   dominated by a negative term.   Specifically, note that the only terms that could be  positive and not tend to zero come from the sum \eqref{eq:dfjaj;kd;afjnew} and are associated with reactions  $y\to y'\in \R_3$ with $y\tle y'$. Fix such a reaction $y \to y'\in \R_3$.    We will now show that there is necessarily a term either  in the sum \eqref{eq:9999new} or the sum \eqref{eq:dfjaj;kd;afjnew} that is negative and dominates it.

Suppose first that there is a reaction $\tilde y \to \tilde y' \in \R_2$ for which $y \tle \tilde y$.  Because $y\to y' \in \R_3$, we know
\[
U(x_n) \gg \ln(x_n^{y' - y}).
\] 
Hence, the term in  \eqref{eq:9999new} associated with $\tilde y \to \tilde y'$ dominates the positive term.

Now assume there is no such reaction $\tilde y \to \tilde y' \in \R_2$ with $y \tle \tilde y$.  
Because our network is strongly endotactic, we may apply  Lemma \ref{lem:dominated_reaction} to conclude that there exists  $y^\star\in \C$ and $y^\star\to y^{\star \star}\in\R$ such that $y\tle y^\star$, $y^{\star \star}\tl y^\star$ and for any choice of  constants $c_1',c_2'\in \RR_{>0}$ and $c_3',c_4'\in\RR$, the inequality \eqref{eq:dominated_reaction_negative} holds for $n$ large enough.  Thus, if we can show that  $y^\star \to y^{\star \star} \in \R_3$, then 
the term in  \eqref{eq:dfjaj;kd;afjnew} associated with $y^\star \to  y^{\star \star}$ dominates the positive term.

Since $y\tle y^\star$, we know from our assumption that $y^{\star} \to y^{\star \star} \notin \R_2$.  Moreover, 
since $y\tle y^\star$, the reaction $y^\star\to y^{\star\star}$ cannot be in $\R_1$ (for otherwise   the definition of $E$ and the fact that $y \to y'\in \R_3$ would imply $y^\star \ln(x_n) - y \ln(x_n)\to -\infty$, as $n \to \infty$).  
Thus, we must have  $y^\star \to y^{\star\star} \in \R_3$, and this concludes the proof of the Lemma \ref{lemma:newtechnical}.
\end{proof}

We now turn to the proof of Theorem \ref{4.3}

\begin{proof}[Proof of Theorem \ref{4.3}]
We will prove the theorem by contradiction.  We therefore suppose that there is a sequence $(x_n,V_n)_{n=0}^\infty$ such that:
\begin{align}
&\bullet \quad  \lim_{n\to\infty}\|x_n\|_1=\infty \label{5:condition1}\hspace{3in}\phantom{.}\\
&\bullet \quad V_n> e^{\|x_n\|_1} \quad \text{ and } \quad x_n \in V_n^{-1}\ZZ^d_{\geq 0}\label{5:condition2}\\
&\bullet \quad (\mathcal{L}_{V_n}U^{V_n})(x_n) \ge 0. \label{5:condition3}
\end{align}
\reply{It is important to note that the sequence $(x_n,V_n)_{n=0}^\infty$ could lie on a boundary where some species remain zero. In order to deal with those species, later in the proof we consider a modified sequence $(\tilde x_n,V_n)_{n=0}^\infty$ that lie away from the boundary but close enough to $(x_n,V_n)_{n=0}^\infty$ so that $(\mathcal{L}_{V_n}U^{V_n})(x_n)$ and $(\mathcal{L}_{V_n}U^{V_n})(\tilde x_n)$ are relatively close. This allows us to make a conclusion on $(\mathcal{L}_{V_n}U^{V_n})(x_n)$ by studying the easier object $(\mathcal{L}_{V_n}U^{V_n})(\tilde x_n)$. } Taking the zero species into consideration, and after potentially taking a subsequence, we may assume the following
\begin{enumerate}[(i)]
\item  $(x_n)_{n=0}^\infty$ is a tier sequence (this follows from Remark \ref{remark1}),
\item   there is an $\ell \in \{0,\dots,d\}$ for which  $x_{n,1} = \cdots = x_{n,\ell} = 0$ and $x_{n,j} > 0$ for all $j  \ge \ell + 1$ and all $n$  (note that $\ell$ can be zero), and 
\item   there is a  subset of the reactions, $\mathcal{P}\subseteq \R$, for which 
\begin{align}\label{eq:98047528047578205}
	\lambda^{S,V_n}_{y\to y'}(x_{n}) \begin{cases}
	> 0 & \text{ if } y\to y' \in \mathcal{P}\\
	=0 & \text{ if } y \to y' \in \R \setminus \mathcal{P}
	\end{cases}
\end{align}
for every $n$.
\item the sign of the terms $U^{V_n}(x_n)-U^{V_n}(x_n+\frac{y'-y}{V_n})$ are constant in $n$, for each $y\to y'\in\mathcal{P}$.
\end{enumerate} 
We will prove that 
$\liminf_{n\to \infty} (\mathcal{L}_{V_n} U^{V_n}) (x_n) = -\infty$, leading to a contradiction.

First, note that for any reaction $y \to y'\in \mathcal{P}$ we have
\begin{align*}
\lambda^{S,V_n}_{y\to y'}(x_n)
&=V_n\kappa_{y\to y'}\prod_{i=1}^d x_{n,i}\bigg(x_{n,i}-\frac{1}{V_n}\bigg)\dots\bigg(x_{n,i}-\frac{y_i-1}{V_n}\bigg),
\end{align*}
which is positive by assumption.  Hence, $x_{n,i} \geq \frac{y_i}{V_n}.$
Thus, for any $1 \le j \le y_i-1$,
\[
x_{n,i} -\frac{j}{V_n} =x_{n,i}-\frac{j}{y_i}\frac{y_i}{V_n}\geq x_{n,i}\bigg(1-\frac{j}{y_i}\bigg).
\]
Thus, letting $c_y=\prod_{i=1}^d\prod_{j=1}^{y_i-1}\bigg(1-\frac{j}{y_i}\bigg)>0$, we have
\begin{equation}\label{hferferjoerjo}
V_n \kappa_{y\to y'} x_n^y\geq \lambda^{S,V_n}_{y\to y'}(x_n) \geq c_{y} V_n \kappa_{y\to y'} x_n^y.
\end{equation}
Combining \eqref{hferferjoerjo} with the fact that the signs of the terms $U^{V_n}(x_n)-U^{V_n}(x_n+\frac{y'-y}{V_n})$ are constant over $n$, we may conclude that 
\begin{equation}\label{eifkernfwir934fejch}
 (\mathcal{L}_{V_n} U^{V_n}) (x_n)\leq \sum_{y\to y' \in \mathcal{P}} V_n\tilde\kappa_{y\to y'}x_n^y\left(U^{V_n}\left(x+\frac{y'-y}{V_n}\right)-U^{V_n}(x)\right)
\end{equation}
for all $n$ and for some positive constants $\tilde\kappa_{y\to y'}$, with $y \to y'\in \mathcal{P}$. For notational convenience, we define the operator
\begin{equation*}
(\mathcal{\widetilde L}_Vf)(x)=\sum_{y\to y' \in \mathcal{P}} V_n\tilde\kappa_{y\to y'}x_n^y\left(f\left(x+\frac{y'-y}{V}\right)-f(x)\right), \quad x \in V^{-1} \ZZ^d_{\ge 0},
\end{equation*}
and we point out that this operator is similar to the generator of the process $\overline X^V$ for the modified reaction rates $\tilde\kappa_{y\to y'}$. In fact, we are simply exchanging the stochastic intensities for the ``deterministic'' intensities for the reactions in $\mathcal{P}$. By \eqref{eifkernfwir934fejch}, it suffices to show that
\begin{equation}\label{detblblbl}
 \liminf_{n\to \infty} (\mathcal{\widetilde L}_{V_n} U^{V_n})(x_n) = -\infty.
\end{equation}

We consider the terms of  $(\mathcal{\widetilde L}_{V_n} U^{V_n})(x_n)$ individually.  Let $y\to y'\in \mathcal{P}$ and note that we must have $y_i = 0$ for each $i \le \ell$. 
Let
\begin{align}\label{6745erhtdhgyd87e7t}
C_{y\to y'}(V_n) = \sum_{i=1}^\ell y'_i \left(\ln \left(\frac{y'_i}{V_n}\right) -1\right).
\end{align}
Note that $|C_{y\to y'}(V_n)|$ grows at most logarithmically in $V_n$, as $n\to \infty$.
Utilizing a Taylor expansion of the logarithm yields
\begin{align*}
&U\bigg(x_n+\frac{y'-y}{V_n}\bigg)  = d+1+V_n^{-1} C_{y\to y'}(V_n)+\sum_{i=\ell + 1}^d \bigg(x_{n,i}+\frac{y'_i-y_i}{V_n}\bigg)\bigg(\ln \bigg(x_{n,i}+\frac{y'_i-y_i}{V_n}\bigg) -1\bigg)  \\
&= d+1+V_n^{-1} C_{y\to y'}(V_n) + \sum_{i=\ell+1}^d \bigg(x_{n,i}+\frac{y'_i-y_i}{V_n}\bigg)\bigg(\ln (x_{n,i})+\frac{y'_i-y_i}{x_{n,i}V_n} + r_i(x_{n,i},V_n) -1\bigg)\\
&=U(x_n)+\frac{1}{V_n}\bigg(C_{y\to y'}(V_n) + \sum_{i=\ell+1}^d (y_i' - y_i) \ln(x_{n,i}) + \sum_{i=\ell+1}^d \bigg(\frac{(y_i'-y_i)^2}{x_{n,i}V_n} + (x_{n,i}V_n+y_i'-y_i)r_i(x_{n,i},V_n)\bigg)\bigg),
\end{align*}
where 
\begin{align*}
|r_i(x_{n,i},V_n)| \leq \frac{c_1}{x_{n,i}^2V_n^2},
\end{align*}
for some $c_1>0$.  
We denote 
\begin{align*}
R_i(x_{n,i},V_n)= \frac{(y_i'-y_i)^2}{x_{n,i}V_n} + (x_{n,i}V_n+y_i'-y_i)r_i(x_{n,i},V_n).
\end{align*}

We have $x_{n,i}V_n \geq 1$ for all $i \ge \ell + 1$, thus
\begin{align}\label{eq:567898787990009new}
|R_i(x_{n,i},V_n)|\leq \frac{(y_i'-y_i)^2}{x_{n,i}V_n} + \frac{c_1}{x_{n,i}V_n} + \frac{c_1|y_i'-y_i|}{x_{n,i}^2V_n^2} \leq \frac{c_2}{x_{n,i}V_n} \leq c_2, 
\end{align}
for some positive constant $c_2$.
Combining the above, and utilizing the inequality 
\begin{align*}
(1 + \varepsilon)^n \le e^{\varepsilon n},
\end{align*}
which holds for all integers $n$ when $|\varepsilon| < 1$,
 it follows that  for $n$ large enough
\begin{align}\label{eq:45678887543new}
\begin{split}
(\mathcal{\widetilde L}_{V_n}& U^{V_n})(x_n) \\
&=\sum_{y\to y'\in \mathcal{P}} V_n\tilde\kappa_{y\to y'}x_n^y U(x_n)^{V_n}\\
&\hspace{.2in}\times \bigg(\bigg(1+\frac{1}{V_n}\frac{ C_{y\to y'}(V_n) + \sum_{i=\ell+1}^d (y_i' - y_i) \ln(x_{n,i}) + \sum_{i=\ell+1}^d R_i(x_{n,i},V_n)}{U(x_n)}\bigg)^{V_n} -1\bigg)\\
&\leq
V_nU(x_n)^{V_n-1}H_{\mathcal P}(x_n,V_n)
\end{split}
\end{align}
where
\begin{align*}
H_{\mathcal{P}}&(x_n,V_n) \\
&= \sum_{y\to y'\in \mathcal{P}}\tilde\kappa_{y\to y'} x_n^y U(x_n)\bigg(\exp\bigg(\frac{  C_{y\to y'}(V_n) + \sum_{i=\ell+1}^d (y_i' - y_i) \ln(x_{n,i}) + \sum_{i=\ell+1}^d R_i(x_{n,i},V_n)}{U(x_n)}\bigg) -1\bigg).
\end{align*}
In order to justify the inequality above, we use that (i) $\lim_{n\to\infty} U(x_n) = \infty$, (ii) the terms $R_i(x_{n,i},V_n)$ are uniformly bounded by \eqref{eq:567898787990009new}, and (iii) $\ln (x_n^{y'-y})$ is at most of order $\ln(V_n)$ because of \eqref{5:condition2} and since $x_{n,i} \ge  V_n^{-1}$ for $i \ge \ell+1$.

We will now  show that $\liminf_{n\to\infty}H_{\mathcal{P}}(x_n,V_n)=-\infty$.  To do so, we consider a new sequence $\tilde x_n$, where
\begin{equation}\label{eq:554223}
\tilde x_{n,1} = \dots =\tilde x_{n,\ell} =\frac{\alpha}{V_n}
\end{equation}
with  $\alpha =\max_{z\in \mathcal{C}, i \in \{1,\dots,d\}}{ z_i}$, and 
\[
\tilde x_{n,i}= x_{n,i}  \quad \text{for} \quad i>\ell.
\]
Because of \eqref{eq:554223} and since $u$ defined in \eqref{eq:genelized lyapunov} is a decreasing function in a positive neighborhood of zero, we have that $U(\tilde x_n) < U(x_n)$ for all $n$.  Also, since 
 $\lim_{n\to\infty}\tilde x_{n,i} =0$ for $i\leq \ell$, we have $\lim_{n\to\infty}\frac{U(\tilde x_n)}{U(x_n)}=1$.
Recalling that  $y\to y'\in\mathcal{P}$ implies  $y_i = 0$ for $i \le \ell$, we have
\begin{align}\label{dfhgdhgjdfd7758}
x_n^y=\tilde x_n^y.
\end{align}

From \eqref{6745erhtdhgyd87e7t}, and because in \eqref{eq:554223} we chose $\alpha\ge y_i'$ for all $i$,
\begin{align}\label{eq1234}
C_{y\to y'}(V_n)<\sum_{i=1}^\ell y_i'\ln  (\tilde x_{n,i}).
\end{align}
Combining  \eqref{eq1234}, $\lim_{n\to\infty}\frac{U(\tilde x_n)}{U(x_n)}=1$, and the bound on $R_i$, we may conclude  there exists $c_3\in \RR$ and $c_4\in \RR_{>0}$ such that 
\begin{align*}
\frac{ C_{y\to y'}(V_n) + \sum_{i=\ell+1}^d (y_i' - y_i) \ln(x_{n,i}) + \sum_{i=\ell+1}^d R_i(x_{n,i},V_n)}{U(x_n)} &< \frac{ \ln ( \tilde x_n^{y'-y})+ c_3}{U(x_n)}<\frac{ \ln ( \tilde x_n^{y'-y})+c_3}{c_4U(\tilde x_n)}
\end{align*}
for $n$ large enough.  
Therefore, utilizing \eqref{dfhgdhgjdfd7758} and the above yields
\begin{align}\label{eq4321}
H_{\mathcal{P}}(x_n,V_n) <\frac{U(x_n)}{U(\tilde x_n)}\sum_{y\to y'\in\mathcal{P}}\tilde\kappa_{y\to y'} \tilde x_n^y U(\tilde x_n)\bigg(\exp\bigg(\frac{ \ln ( \tilde x_n^{y'-y})+ c_3}{c_4U(\tilde x_n)}\bigg)-1\bigg).
\end{align}
By  Lemma \ref{lemma:newtechnical} we have
\begin{align}\label{0987809787078078}
\liminf_{n\to \infty} \sum_{y\to y'\in\mathcal{R}}\tilde\kappa_{y\to y'} \tilde x_n^y U(\tilde x_n)\bigg(\exp\bigg(\frac{ \ln ( \tilde x_n^{y'-y})+ c_3}{c_4U(\tilde x_n)}\bigg)-1\bigg) = -\infty.
\end{align}
Therefore, in order to conclude that $\liminf_{n\to\infty}H_{\mathcal{P}}(x_n,V_n)=-\infty$, it is sufficient to show that
\begin{align}\label{akdjfakjf;j}
\lim_{n\to \infty} \sum_{y\to y'\in\mathcal{R}\setminus\mathcal{P}}\tilde\kappa_{y\to y'} \tilde x_n^y U(\tilde x_n)\bigg(\exp\bigg(\frac{ \ln ( \tilde x_n^{y'-y})+ c_3}{c_4U(\tilde x_n)}\bigg)-1\bigg) = 0.
\end{align}

Let  $y\to y'\in \R\setminus\mathcal{P}$.  At least one of the following must be true 
\begin{enumerate}
\item there is a  $k$ with $k>\ell$ such that $y_k > 0$ and $x_{n,k}<\frac{y_k}{V_n}$.   In this case we also have  $\tilde x_{n,k}=x_{n,k} < \frac{y_k}{V_n}$.  
\item there is a  $k$ with $k\le \ell$ such that $y_k > 0$.  In this case we have $\tilde x_{n,k}=\frac{\alpha}{V_n}$.
\end{enumerate}
In either case we have  $\frac1{V_n} \le \tilde x_{n,k} \le \frac{\alpha}{V_n}$.  \reply{We select one such $k$.} Using this, together with the fact that $\ln(\|x_n\|_1) < \ln(\ln(V_n))$, implies there is a $c_5>0$ for which  
\[
\exp\bigg(\frac{ \ln ( \tilde x_n^{y'-y})+ c_3}{c_4U(\tilde x_n)}\bigg) \leq \exp\bigg(\frac{c_5 \ln V_n}{U(\tilde x_n)}\bigg) = V_n^{c_5/U(\tilde x_n)}.
\]
Thus
\begin{align}
\label{eq:09764f56787}
\begin{split}
\bigg| \tilde x_n^y U(\tilde x_n)\bigg(\exp&\bigg(\frac{ \ln ( \tilde x_n^{y'-y})+ c_3}{c_4U(\tilde x_n)}\bigg)-1\bigg) \bigg|\\
&\leq U(\tilde x_n) \bigg( \prod_{i\neq k}\tilde x_{n,i}^{y_i}\bigg) \frac{\alpha^{y_k}}{V_n^{y_k}}V_n^{c_5/U(\tilde x_n)} +  U(\tilde x_n)\bigg( \prod_{i\neq k}\tilde x_{n,i}^{y_i} \bigg) \frac{\alpha^{y_k}}{V_n^{y_k}}\\
&= U(\tilde x_n) \bigg( \prod_{i\neq k}\tilde x_{n,i}^{y_i} \bigg) \frac{\alpha^{y_k}}{V_n^{y_k-c_5/U(\tilde x_n)}} +  U(\tilde x_n)\bigg( \prod_{i\neq k}\tilde x_{n,i}^{y_i} \bigg) \frac{\alpha^{y_k}}{V_n^{y_k}}.
\end{split}
\end{align}
Since $V_n \geq e^{\|\tilde x_n\|_1}$ and $U(\tilde x_n)$ grows like $\|\tilde x_n\|_1 \ln \|\tilde x_n\|_1$, as $n\to \infty$,
    both terms go to $0$, showing \eqref{akdjfakjf;j}.  Combining \eqref{eq:45678887543new}, \eqref{eq4321}, \eqref{0987809787078078}, and   \eqref{akdjfakjf;j},  allows us to conclude that \eqref{detblblbl} holds.  Thus, the proof of the theorem is complete.
\end{proof}

\section{Network conditions for positive recurrence of strongly endotactic reaction networks}
\label{sec:PR_SE}
As we showed in \textit{Example} \ref{ex:SE_transient} and \textit{Example} \ref{ex:SE_explosive}, strong endotacticity is not a sufficient condition for positive recurrence of the associated Markov model introduced in section \ref{sec:stochastic_model}. 
Thus, in this section we provide additional network conditions for strongly endotactic reaction networks that guarantee positive recurrence. 
We note that while the previous section considered families of models under the ``classical scaling,'' this section does not.  We therefore  drop the $V$-dependence in the notation.  For example, the generator will now be denoted as $\mathcal{L}$ instead of $\mathcal{L}_V$.

We require two definitions.
\begin{definition}
A reaction network $(\S,\C,\R)$ is \emph{binary} if $\Vert y \Vert_1 \le 2$ for each $y \in \C$. 
\end{definition}

Many reaction networks in biology and chemistry are binary 
as it is rare that more than two molecules would interact simultaneously.

\begin{definition}
The reactions $0 \rightarrow S$ and $S \rightarrow 0$  are the \emph{in-flow} and \emph{out-flow} of species $S\in \S$, respectively.  A reaction network is  \emph{fully open} if $0 \to S\in \R$ and $S \to 0 \in \R$ for each $S \in \S$.
\end{definition}

The main theorem provided in this section, Theorem \ref{thm:positive recurrence 1} below, will allow us to conclude   that, for example, the Markov process associated with a reaction network that is a union of (i) a binary, strongly endotactic network, and (ii) some in-flows and all out-flows, is necessarily positive recurrent.  This is made precise in the following  corollary.

\begin{corollary}\label{thm:jinsu_binary}
Let $(\S,\C,\R)$ be a binary, strongly endotactic reaction network. Let $\R_{\text{out}}$ be the union of outflows,  $\cup_S \{S \to 0\}$, and let $\R_{\text{in}}$ be a subset of the inflows, $\cup_S\{0 \to S\}$.  Then let
\[
	\widetilde{\R}=\R\cup \R_{\text{out}} \cup \R_{\text{in}} 
\]
 and $\widetilde{\C}=\C\cup \{S\in \S\}\cup \{0\}.$
	 Then, for any choice of rate constants, the Markov process with reaction network $(\S, \widetilde\C, \widetilde\R)$ and stochastic mass action kinetics  satisfies the following: each state in a closed, irreducible component of the state space is positive recurrent; moreover, if  $\tau_{x_0}$ is the time for the process to enter the union of the closed irreducible components given an initial condition  $x_0$, then $\mathbb{E}[\tau_{x_0}] < \infty$.
\end{corollary}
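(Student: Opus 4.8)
The corollary will follow from Theorem~\ref{thm:positive recurrence 1} below; the plan is a Foster--Lyapunov drift argument for the generator $\mathcal{L}$ of the stochastic mass-action chain on $(\S,\widetilde\C,\widetilde\R)$, using the function $U$ from \eqref{eq:lyapunov}, and the sketch that follows is the mechanism behind that theorem. By the standard Foster--Lyapunov criterion for continuous-time Markov chains it is enough to produce a finite set $K\subset\ZZ^d_{\ge0}$ and $\varepsilon>0$ with $(\mathcal{L} U)(x)\le-\varepsilon$ for all $x\notin K$: this forces non-explosion, and on a closed irreducible component it yields positive recurrence of every state ($K$ being finite) together with the bound $\mathbb{E}[\tau_{x_0}]<\infty$ for the hitting time of the union of the closed irreducible components. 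I would argue by contradiction: if no such pair exists then, taking $K$ to be $\ell^1$-balls of increasing radius and $\varepsilon\downarrow0$, there is a sequence $(x_n)\subset\ZZ^d_{\ge0}$ with $\|x_n\|_1\to\infty$ and $\liminf_n(\mathcal{L} U)(x_n)\ge0$, and the aim is to show instead that $(\mathcal{L} U)(x_n)\to-\infty$ along a subsequence.

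The next step is to normalise the sequence. After passing to a subsequence, partition the coordinates into those that are identically $0$ (set $Z$), those that are positive and bounded, hence eventually constant (set $B$), and those that diverge to $\infty$ (set $G$, which is nonempty since $\|x_n\|_1\to\infty$); arrange also that every activity indicator $\mathbbm{1}_{\{x_n\ge y\}}$ and every sign of $U(x_n+y'-y)-U(x_n)$ is constant in $n$, and that the positive vector $\hat x_n$ obtained from $x_n$ by resetting the $Z$-coordinates to $1$ is a tier sequence (Remark~\ref{remark1}; here $G\ne\emptyset$ is used). Each active reaction then has source vanishing on $Z$. A Taylor expansion using $u'(t)=\ln t$ and binarity (all reaction vectors have $\ell^1$-norm at most $4$) shows that each active term of $(\mathcal{L} U)(x_n)$ equals $\tilde\kappa_{y\to y'}\,\hat x_n^{\,y}\big(\ln(\hat x_n^{\,y'-y})+c_{y\to y'}\big)+o(1)$ for a positive constant $\tilde\kappa_{y\to y'}$ and a bounded constant $c_{y\to y'}$, while inactive reactions contribute $0$. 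Now write $(\mathcal{L} U)(x_n)$ as the sum of its $\R$-part, its out-flow part, and its in-flow part. The in-flows have constant rate, so the in-flow part is $O(\ln\|x_n\|_\infty)$. The out-flow part equals $-\sum_{i\notin Z}\kappa_{S_i\to0}\,x_{n,i}\ln x_{n,i}$ up to a bounded correction; it is bounded above, and its terms for $i\in G$ diverge to $-\infty$, so the out-flow part tends to $-\infty$. For the $\R$-part I would use that the projection $p(\G)$ of $\G$ onto the coordinates $B\cup G$ is strongly endotactic by Lemma~\ref{lem:projection}, hence tier descending by Theorem~\ref{thm:SE_tiers}; Proposition~\ref{prop:lyapunov} and Lemma~\ref{lem:dominated_reaction} (which is stated precisely with such additive constants) then show the corresponding deterministic-type Lyapunov sum is eventually negative. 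Hence $(\mathcal{L} U)(x_n)\le o(1)\cdot(\text{bounded})+(\text{term}\to-\infty)+O(\ln\|x_n\|_\infty)\to-\infty$, the desired contradiction.

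The hard part is the last step. The reactions that actually appear in the $\R$-part of $(\mathcal{L} U)(x_n)$ form only a \emph{sub}collection of the reactions of $p(\G)$ --- those whose source neither meets $Z$ nor is oversized relative to a bounded coordinate in $B$ --- and a subnetwork of a strongly endotactic network need not be strongly endotactic, so Proposition~\ref{prop:lyapunov} and Lemma~\ref{lem:dominated_reaction} applied to $p(\G)$ do not literally control a partial sum. The resolution, and the reason \emph{all} out-flows are assumed (whereas only \emph{some} in-flows are needed, in-flows being negligible for the drift and affecting only which states form closed irreducible components), is that the omitted reactions of $p(\G)$ have source of $\ell^1$-norm at most $1$ --- their original source met $Z$ --- and therefore act like in-flows or like single-species out-flows of $p(\G)$; the latter are furnished directly by the out-flows added to $\widetilde\R$, which supply exactly the negative terms needed to dominate any positive leftover. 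Binarity is used to cap reaction rates at order $\|x\|_\infty^2$, to keep the projected sources small, and to force the constants attached to reactions producing a $Z$-species to be non-positive (since $u(1),u(2)<u(0)=1$). Making this bookkeeping precise, together with the parallel care for coordinates in $B$ where $x_{n,i}=1<y_i$ can occur, is the substance of the argument and is what Theorem~\ref{thm:positive recurrence 1} carries out in general.
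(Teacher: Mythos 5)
Your proposal is correct and follows the paper's own route: Corollary~\ref{thm:jinsu_binary} is obtained exactly as in the paper, by invoking Theorem~\ref{thm:positive recurrence 1} in the case $m=2$, with the out-flows $\R_{\text{out}}$ serving as an admissible $\R'$ and the in-flows $\R_{\text{in}}$ as an admissible $\R''$ (condition \eqref{987987678956785657} reducing to $0<1$). Your extended sketch of the drift mechanism behind that theorem differs in its bookkeeping from the paper's actual proof of Theorem~\ref{thm:positive recurrence 1} (the paper works with the tier sequence $x_n\vee 1$ for the full network and dominates problematic terms via an out-flow type reaction in the largest coordinate, rather than projecting onto the non-vanishing coordinates), but since you explicitly defer that hard step to the theorem itself, the proof of the corollary is the same reduction as in the paper.
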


Note that Corollary \ref{thm:jinsu_binary} implies that if a reaction network, $(\S,\C,\R)$ is strongly endotactic, binary, and fully open, then the associated Markov model is necessarily positive recurrent, regardless of the choice of rate constants.  This follows since in this case,  $(\S,\C,\R) = (\S, \widetilde\C, \widetilde\R)$.

We also note that when $(\S,\C,\R) \ne (\S, \widetilde\C, \widetilde\R)$ in Corollary \ref{thm:jinsu_binary},  the resulting reaction network $(\S, \widetilde\C, \widetilde\R)$ may not be strongly endotactic.  We provide an example.
\begin{example}
Consider the reaction network with species $\S = \{S_1,S_2\}$ and reactions
\[
2S_1 \rightleftarrows S_1 + S_2.
\]
This network is binary and strongly endotactic (for example, this follows because the network is weakly reversible and consists of a single linkage class \cite{GMS:geometric}).  However, the fully open network
\begin{align}\label{eq:fully_open987908}
\begin{split}
2S_1 &\rightleftarrows S_1 + S_2\\
S_1 &\rightleftarrows 0 \rightleftarrows S_2,
\end{split}
\end{align}
is not strongly endotactic.  This can be seen by noting that the transversal tier sequence $(n,n)$ is not tier descending.  Of course, by Corollary \ref{thm:jinsu_binary} the fully open network \eqref{eq:fully_open987908} is positive recurrent for any choice of rate constants.  \hfill $\square$
\end{example}


Corollary \ref{thm:jinsu_binary} is a special case of Theorem \ref{thm:positive recurrence 1} below\reply{, which can be seen as a generalization to non-binary reaction networks. In Theorem \ref{thm:positive recurrence 1}, the role of $\R_{\text{out}}$ and $\R_{\text{in}}$ is played by the sets of reactions $\R'$ and $\R''$, respectively, which are defined below. Specifically, $\R'$ is a set of reactions that decrease the total number of molecules, and $\R''$ is a set of reactions that increase it. The reactions in $\R''$ are not used in the proof, and are considered in Theorem \ref{thm:positive recurrence 1} only to add to the generality of the result. The reactions in $\R'$, on the other hand, play an important role in the proof of positive recurrence, and as illustrated in Example~\ref{ex:SE_transient} the statement would not hold true without the inclusion of $\R'$.} 

 Let $(\S,\C,\R)$ be a reaction network with $\S = \{S_1,\dots,S_d\}$ and   $m = \max\{\Vert y \Vert_{1} : y \in \C^S\}$.  \reply{We remind the reader that $\C^S$ denotes the set of} source complexes.   
 Next, for  $S_i\in \S$, we let $\R_{i}$ be a nonempty, finite subset of 
\[
\left\{ a S_i \to \sum_{j=1}^d r_j' S_j : a \ge m-1 \text{ and } \sum_{j=1}^d r_j' \le a-1\right\}
\]
and let 
\[
\R' = \cup_{i = 1}^d \R_i.
\]
Let $\C'$  be the set of complexes associated with the reactions  in $\R'$.  Next, we let $\R''$ be a subset of 
\[
\left\{ \sum_{j=1}^d r_j S_j \to \sum_{j=1}^d r_j' S_j : \sum_{j=1}^d r_j \le m-2\right\}
\]
and let $\C''$ be the set of complexes associated with the reactions  in $\R''$.  Note that it is possible, though not required, that either  $\R' \subset \R$ or $\R'' \subset \R$.  It is also possible that $\R''=\emptyset$.  

\begin{theorem}\label{thm:positive recurrence 1}
Let $(\S,\C,\R)$ be a strongly endotactic reaction network with  $\S=\{S_1,S_2,\dots,S_d\}$ and let $m=\max\{\Vert y \Vert_{1} : y \in \C^S\}$.  Let $\R',\R'',\C',$ and $\C''$ be as above and let $\widetilde \R =  \R\cup\R'\cup \R''$ and $\widetilde \C =\C \cup \C' \cup \C''$.   We assume further that 
\begin{equation}\label{987987678956785657}
\max_{y\to y' \in \R'} \| y' \|_1  < \min_{y\to y'\in \R'}  \| y\|_1.
\end{equation}
Then, for any choice of rate constants, the Markov process with reaction network $(\S, \widetilde\C, \widetilde\R)$ and stochastic mass action kinetics satisfies the following: each state in a closed, irreducible component of the state space is positive recurrent; moreover, if  $\tau_{x_0}$ is the time for the process to enter the union of the closed irreducible components given an initial condition  $x_0$, then $\mathbb{E}[\tau_{x_0}] < \infty$.
\end{theorem}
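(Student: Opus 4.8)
The plan is to establish a Foster--Lyapunov drift condition for the generator $\mathcal{L}$ of the stochastic mass action system on $(\S,\widetilde\C,\widetilde\R)$: exhibit a function $V\colon\ZZ^d_{\ge0}\to\RR_{\ge0}$ with finite sublevel sets such that $\mathcal{L}V(x)\le-1$ for all $x$ outside some finite set $F$. By the standard Meyn--Tweedie theory this gives, in one stroke, non-explosivity, positive recurrence of each closed irreducible component, and $\mathbb{E}[\tau_{x_0}]<\infty$ for the first entrance time into the union of those components. The argument would be run by contradiction along tier sequences, exactly in the spirit of Section~\ref{sec:LDP}: if no such $F$ existed, one could pick states $x_n$ with $\|x_n\|_1\to\infty$ and $\mathcal{L}V(x_n)>-1$, and — after replacing the coordinates that stay at (or near) zero by a small positive sequence, as in the proof of Theorem~\ref{4.3}, so that $\mathcal{L}V(x_n)$ is well approximated by its value at a strictly positive point — pass to a subsequence that is a tier sequence and derive $\liminf_n\mathcal{L}V(x_n)=-\infty$, a contradiction.

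A useful preliminary observation: by condition \eqref{987987678956785657}, each reaction $aS_i\to y'$ in $\R'$ satisfies $\sum_{j\ne i}y'_j=\|y'\|_1-y'_i<a-y'_i=|(y'-aS_i)_i|$, so the $d$ reaction vectors of these reactions (one per species, as each $\R_i$ is nonempty) form a strictly row-diagonally-dominant matrix; hence $\widetilde S=\RR^d$. Consequently the state space is, on each irreducible component, effectively all of $\ZZ^d_{\ge0}$, every tier sequence arising here is proper, and therefore transversal by Lemma~\ref{lem:proper_transversal}. I would then split $\mathcal{L}V=\mathcal{L}_{\R}V+\mathcal{L}_{\R'}V+\mathcal{L}_{\R''}V$ and examine the three pieces along the tier sequence. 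For the strongly endotactic skeleton $\R$, the stochastic intensities are comparable to the deterministic mass action ones for large coordinates, so (up to lower-order corrections handled as in Section~\ref{sec:LDP}) $\mathcal{L}_{\R}V$ is governed by $\sum_{\R}\kappa_{y\to y'}x_n^y\ln(x_n^{y'-y})$, controlled via Proposition~\ref{prop:lyapunov} and Lemma~\ref{lem:dominated_reaction}: every possibly-positive term coming from $\R$ is dominated by a negative term originating in the top source tier. For $\R''$ the bound $\|y\|_1\le m-2$ forces each term to be of strictly lower order than the decisive $\R'$ contribution. The heart of the argument is $\mathcal{L}_{\R'}V$: for $aS_i\to y'\in\R'$ with $x_{n,i}$ a coordinate of maximal growth, the constraint $\sum_j y'_j\le a-1$ together with $x_{n,i}=\|x_n\|_\infty$ yields $\ln(x_n^{y'-aS_i})\le-\ln\|x_n\|_\infty\to-\infty$, while the reaction fires at rate of order $x_{n,i}^{a}$ with $a\ge m-1$; condition \eqref{987987678956785657} is then used again to rule out positive cross-contributions among the $\R'$ reactions.

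The main obstacle I anticipate is engineering $V$ and the bookkeeping so that the negative $\R'$-drift genuinely dominates everything else, uniformly in the rate constants. The naive choice $V=U$ of \eqref{eq:lyapunov} does not suffice in general: adjoining $\R'$ and $\R''$ destroys strong endotacticity, and since now $\widetilde S=\RR^d$ one must also cope with tier sequences that drift inside a coset of the skeleton's stoichiometric subspace $S$, along which $\mathcal{L}_{\R}U$ can be positive and of order $\|x_n\|_\infty^{m}$ — which the $\R'$ term of order $x_{n,i}^{a}\ln\|x_n\|_\infty$ does not beat when $a=m-1$. I therefore expect that one must either work with a modified Lyapunov function (for instance $U$ augmented by a term strictly decreasing along the reactions of $\R'$ whose $\R$-drift is of strictly lower order, such as a small power of a weighted $\ell^1$-functional, or $\|x\|_1\ln\|x\|_1$), or carry out a two-stage argument in which one first uses only $\R'$ to show that the total molecule count $\|x\|_1$ returns in finite expected time to a bounded — hence finite — subset of the state space, after which the conclusion is immediate on that finite set. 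Either way, the delicate step is the degree-counting that identifies which reaction of $\widetilde\R$ supplies the dominant term along an arbitrary proper tier sequence, and the verification that the reactions in $\R'$ — precisely because $a\ge m-1$, $\sum_j r'_j\le a-1$, and \eqref{987987678956785657} hold — always do.
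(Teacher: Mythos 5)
Your overall framework is the same as the paper's: a Foster--Lyapunov drift condition verified by contradiction along tier sequences of states, with stochastic intensities compared to deterministic ones for large coordinates (and your diagonal-dominance remark that the enlarged stoichiometric subspace is all of $\RR^d$ is correct, though the paper instead works with $x_{n_k}\vee 1$ and Lemma~\ref{lem:proper x_n implies trans x_n V 1} to handle states with zero coordinates). The genuine gap is that you stop at exactly the decisive step and talk yourself out of the choice that works: the paper proves the drift inequality with the plain function $U$ of \eqref{eq:lyapunov}, using the upper bound of Lemma~\ref{lemma:main} and the criterion of Theorem~\ref{thm11}; no modified Lyapunov function and no two-stage argument is needed. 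Your stated obstruction --- a positive $\R$-contribution of order $\|x_n\|_\infty^{m}\ln\|x_n\|_\infty$ against an $\R'$-term of order only $x_{n,p}^{a}\ln x_{n,p}$ with possibly $a=m-1$ --- is resolved by a pairing scheme your degree count misses. An ascending term coming from $\R$ is first dominated \emph{inside} $\R$ via Lemma~\ref{lem:dominated_reaction}: strong endotacticity of the skeleton produces a reaction $y^\star\to y^{\star\star}$ with $y\precsim_{(x_{n_k}\vee 1)}y^\star$ and $y^{\star\star}\prec_{(x_{n_k}\vee 1)}y^\star$ whose (deterministic) rate is at least of the order of the offending term. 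The reaction from $\R'$ is invoked only when that partner cannot fire stochastically, i.e.\ some coordinate stays below $y^\star_i$; precisely in that residual case one has $(x_{n_k}\vee 1)^{y^\star}\le c\,x_{n_k,p}^{\,m-1}$, where $p$ indexes a maximal coordinate along the subsequence, so the always-chargeable reaction $aS_p\to y_{S_p}\in\R_p$ with $a\ge m-1$ (whose intensity tends to infinity) dominates it. The same reaction also swallows the ascending $\R'$-terms, whose intensities are $O(x_{n_k,p}^{\,a-1})$ by \eqref{987987678956785657}, and all $\R''$-terms, whose intensities are $O(x_{n_k,p}^{\,m-2})$. So the $\R'$ reactions never have to beat the full-degree $\R$-drift; they only cover the cases the endotactic structure of $\R$ cannot.

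Concretely, then: neither of your fallback routes is carried out, and the two-stage idea (use only $\R'$ to drive $\|x\|_1$ back to a bounded set) fails as stated, because when $a=m-1$ the $\R'$ rates are of strictly lower degree than the $\R$ rates, so $\R'$ alone cannot control the count --- the strong endotacticity of $\R$ must be used to neutralize the $\R$-drift, which is exactly what the internal domination via Lemma~\ref{lem:dominated_reaction} (together with Proposition~\ref{prop:lyapunov}-type bookkeeping and the comparability of $\lambda^S_{y\to y'}(x_{n_k})$ with $(x_{n_k}\vee 1)^y$ when the reaction can fire) accomplishes. With the maximal-coordinate index $p$ and the case split according to whether the dominating partner has vanishing stochastic intensity, one obtains $(\mathcal{L}U)(x)\le -1$ off a finite set, and positive recurrence together with $\mathbb{E}[\tau_{x_0}]<\infty$ follow from Theorem~\ref{thm11}. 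Without supplying this (or a worked-out substitute Lyapunov function), your proposal does not prove the theorem.
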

Corollary \ref{thm:jinsu_binary}  follows from Theorem \ref{thm:positive recurrence 1}   by considering the case  $m = 2$.

To prove Theorem \ref{thm:positive recurrence 1}, we require the following well-known result, sometimes referred to as the ``Foster-Lyapunov criterion.'' \reply{For completeness, we include here a proof that makes use of the techniques developed in \cite{MT-LyaFosterIII}, which we refer to for more on this topic.} 

\reply{
\begin{theorem} \label{thm11}Let $X$ be a continuous-time Markov process on a state space $\mathbb{S}\subseteq\ZZ_{\geq0}^d$
with generator $\mathcal L$. Suppose there exists a finite set $K \subset \mathbb{S}$ and a function $U\colon\mathbb{S}\to\RR_{\geq0}$ such that $U(x)$ tends to infinity as $|x|\to\infty$ and 
\begin{eqnarray}\label{lya}
(\mathcal{L}U)(x) \le -1
\end{eqnarray}
for all $x \in \mathbb{S}\setminus K$. Then each state in a closed, irreducible component of $\mathbb{S}$ is positive recurrent.  Moreover, if $\tau_{x_0}$ is the time for the process to enter the union of the closed irreducible components given an initial condition  $x_0$, then $\mathbb{E}[\tau_{x_0}] < \infty$.
\end{theorem}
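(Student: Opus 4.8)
The plan is to follow the classical Foster--Lyapunov route: use the drift inequality to show that the finite set $K$ is hit in finite expected time starting from any state (which en route rules out explosion), and then deduce positive recurrence inside closed irreducible components by a standard renewal argument, exactly as in the countable-state setting of \cite{MT-LyaFosterIII}. First I would set up a localized Dynkin formula to handle possible explosiveness: fix an increasing exhaustion of $\mathbb{S}$ by finite sets $B_n$, put $\sigma_n=\inf\{t\ge 0:X(t)\notin B_n\}$ (so $\sigma_n\uparrow T_\infty$, the explosion time) and $\tau_K=\inf\{t\ge 0:X(t)\in K\}$. For $n$ large enough that $K\subseteq B_n$ the jump rates of $X$ are bounded on $B_n$, so the compensated process stopped at $\sigma_n\wedge\tau_K$ is a genuine martingale; since $(\mathcal{L}U)(X(s))\le -1$ for $s<\tau_K$ and $U\ge 0$, this gives, for every $x\in\mathbb{S}$ and every $t\ge 0$,
\[
0 \;\le\; \mathbb{E}_x[U(X(t\wedge\sigma_n\wedge\tau_K))] \;\le\; U(x) - \mathbb{E}_x[\,t\wedge\sigma_n\wedge\tau_K\,].
\]

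From this inequality I would extract two facts. Letting $t\to\infty$ and then $n\to\infty$, monotone convergence yields $\mathbb{E}_x[T_\infty\wedge\tau_K]\le U(x)$. Separately, since $X$ is a pure jump process, $U(X(\sigma_n))\ge u_n:=\inf\{U(y):y\notin B_n\}$ and $u_n\to\infty$ because $U(x)\to\infty$ as $|x|\to\infty$; retaining the contribution of the event $\{\sigma_n<t\wedge\tau_K\}$ in the display gives $\mathbb{P}_x(\sigma_n<t\wedge\tau_K)\le U(x)/u_n\to 0$, and since $\{T_\infty<t\wedge\tau_K\}\subseteq\{\sigma_n<t\wedge\tau_K\}$ for every $n$, letting $n\to\infty$ and then $t\to\infty$ shows $\mathbb{P}_x(T_\infty<\tau_K)=0$. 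Combining the two, $\tau_K\le T_\infty$ almost surely and $\mathbb{E}_x[\tau_K]\le U(x)<\infty$ for every $x$. Moreover, from any $y\in K$ the return time to $K$ has finite expectation --- the first holding time plus $\max_z U(z)$ over the finitely many states $z$ reachable from $y$ in one jump --- so $X$ visits the finite set $K$ infinitely often with finite expected gaps, which forces $T_\infty=\infty$; the process is therefore non-explosive.

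To finish, fix a closed irreducible component $C$ and a state $x_0\in C$. The process started at $x_0$ stays in $C$ and hits $K$ almost surely, so $K\cap C\neq\emptyset$, and by the same Dynkin estimate $\mathbb{E}_z[\tau_{K\cap C}]\le U(z)<\infty$ for every $z\in C$ (the process cannot leave $C$). Observing $X$ at its successive visits to the finite set $K\cap C$ and invoking irreducibility of $C$ then shows that every state of $C$ is positive recurrent; this is precisely the drift criterion of \cite{MT-LyaFosterIII} specialized to a countable state space with finite $K$. For the last assertion, $\tau_{x_0}$ is bounded by $\tau_K$ plus the additional time needed, after reaching $K$, to be absorbed into the union of the closed irreducible components; since $K$ is finite and $X$ returns to $K$ infinitely often, some state of $K$ is recurrent and hence lies in a closed irreducible component, and a geometric-trials estimate (again along the lines of \cite{MT-LyaFosterIII}) gives $\mathbb{E}[\tau_{x_0}]<\infty$.

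I expect the only genuinely delicate point to be the explosion bookkeeping in the second paragraph: the hypotheses permit $X$ to be explosive a priori, so Dynkin's formula must be applied only along the localizing sequence $\sigma_n$, and explosion before $K$ is hit must be excluded separately using that $U$ is norm-like. Once it is known that the finite set $K$ is reached (and returned to) in finite expected time, the remaining steps --- non-explosion, positive recurrence within closed irreducible components, and finiteness of $\mathbb{E}[\tau_{x_0}]$ --- are routine Markov-chain arguments.
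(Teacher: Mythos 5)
Your argument is correct, and it reaches the same central estimate as the paper --- Dynkin's formula plus the drift bound, localized so as to be valid without assuming non-explosivity, yielding $\mathbb{E}_x[\tau_K]\le U(x)<\infty$ --- but it gets there by a genuinely different route on the explosion issue. The paper simply quotes \cite[Theorem 2.1]{MT-LyaFosterIII} for non-explosivity, and only then localizes Dynkin's formula with the level-set times $\tau_M=\inf\{t:U(X(t))\ge M\}$, using $\sup_{s\le t}U(X(s))<\infty$ a.s.\ (a consequence of non-explosivity) to pass to the limit; the remaining positive-recurrence step is delegated to ``standard arguments'' with a citation of Norris. You instead localize with exit times $\sigma_n$ from an exhausting sequence of finite sets, use the norm-like property of $U$ to get $\mathbb{P}_x(\sigma_n<t\wedge\tau_K)\le U(x)/u_n\to 0$ and hence $\mathbb{P}_x(T_\infty<\tau_K)=0$, and then derive non-explosion yourself from the infinitely many visits to the finite set $K$ separated by holding times with uniformly bounded rates. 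This buys a self-contained proof that extracts non-explosivity directly from the drift condition, at the cost of spelling out the renewal/geometric-trials bookkeeping that the paper outsources; the paper's version is shorter but leans on the cited Meyn--Tweedie theorem.

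One small caveat: your bound on the expected return time to $K$ from $y\in K$ via ``$\max_z U(z)$ over the finitely many states reachable from $y$ in one jump'' implicitly assumes finitely many jump targets per state. That holds in the paper's setting (finitely many reactions), but in the literal generality of the statement you should instead bound $\mathbb{E}_y[U(X_{J_1})]$ using finiteness of $(\mathcal{L}U)(y)$ (implicit in $U$ being in the domain of $\mathcal{L}$), or restrict to the finitely-many-channels case; the rest of your argument, including the final Wald-type estimate for $\mathbb{E}[\tau_{x_0}]$, is sound.
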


\begin{proof}
Non-explosivity follows from \cite[Theorem 2.1]{MT-LyaFosterIII}, hence the random variable $U(X(t))$ is well-defined for all $t>0$ and
\begin{equation}\label{aejhjhgjksahfaiWHCLKJ}
 \sup_{0\leq s\leq t}U(X(s))<\infty\quad\text{a.s.}
\end{equation}
for any $t>0$ and any initial condition $X(0)=x_0$.

To conclude the proof, by standard arguments on continuous-time Markov chains \cite{norris:markov}, it is sufficient to show that the hitting time $\tau_K$ of the finite set $K$ has finite expectation for any initial condition. For any $M\in\RR_{>0}$, let
$$
\tau_M=\inf\{t>0\,:\,U(X(t))\geq M\}.
$$
By Dynkin's formula, for any initial condition $x_0$ and any $M,t\in\RR_{>0}$
\begin{align*}
	E\left[ U(X(t \wedge \tau_K\wedge \tau_M))\right] &= U(x_0) + E \left[ \int_0^{t \wedge \tau_K\wedge \tau_M} \mathcal{L}U (X(s)) ds\right]\\
	&\le U(x_0) - E \left[  t \wedge \tau_K\wedge \tau_M \right],
\end{align*}
and by non-negativity of $U$ we have
\begin{equation}\label{dlkjfhakusehfhwg}
 E \left[  t \wedge \tau_K\wedge \tau_M \right]\leq U(x_0)<\infty.
\end{equation}
By \eqref{aejhjhgjksahfaiWHCLKJ}, for any fixed $t>0$ the random variable $t \wedge \tau_K\wedge \tau_M$ converges almost surely to $t \wedge \tau_K$, as $M\to\infty$. Therefore, by the monotone convergence theorem and \eqref{dlkjfhakusehfhwg} we have
$$ E \left[t \wedge \tau_K \right]\leq U(x_0)<\infty\quad\text{for all }t\in\RR_{>0}.$$
Since $U(x_0)$ does not depend on $t$, the stopping time $\tau_K$ is almost surely finite. Hence, $t\wedge \tau_K$ converges almost surely to $\tau_K$ as $t\to\infty$ and by applying the monotone converge theorem again we obtain that $E[\tau_K]$ is finite.
\end{proof}
}

The next lemma,   introduced in \cite[Lemma 4.1]{AK2017}, provides an  upper bound on $(\mathcal{L}U)(x_n)$, where $(x_n)_{n=0}^\infty$ is a sequence in $\mathbb{Z}^{d}_{\ge 0}$ satisfying $\lim_{n\to \infty} \|x_n\|_1 = \infty$, and $U$ is the  usual Lyapunov function defined in \eqref{eq:lyapunov}.  

\begin{lemma}\label{lemma:main}
Let $\mathcal L$ be the  generator of the Markov process associated with a reaction network $(\S,\C,\R)$ with stochastic mass-action kinetics \eqref{eq:stochastic mass action}. Let $U$ be the function defined in \eqref{eq:lyapunov}.   For a sequence $(x_n)_{n=0}^\infty$ in $\mathbb{Z}^{d}_{\ge 0}$ such that $\lim_{n\to \infty}\|x_n\|_1= \infty$, 
there is a constant $C>0$  for which
\begin{align*}
( \mathcal{L} U)(x_n) \le \sum_{y\rightarrow y'\in \R}\lambda^S_{y\to y'}(x_n)\left(\ln((x_n\vee 1)^{y'-y}) +C \right), \quad \text{for every } n \ge 0.
\end{align*}
 \end{lemma}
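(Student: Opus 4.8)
The plan is to bound $(\mathcal{L}U)(x_n)$ reaction by reaction, reducing the multivariate estimate to a one‑dimensional inequality for the scalar function $u$ of \eqref{eq:genelized lyapunov}. Recall that $u$ is continuous on $\RR_{\ge 0}$ (with $u(0)=1=\lim_{t\to 0^+}u(t)$) and, on $\RR_{>0}$, convex with $u'(t)=\ln t$. The heart of the argument is the following claim: there is a constant $c>0$ depending only on $m:=\max_{y\in\C}\|y\|_1$ (hence only on the network) such that for every integer $x\ge 0$ and every integer $\xi$ with $|\xi|\le 2m$ and $x+\xi\ge 0$,
\[
u(x+\xi)-u(x)\le \xi\ln(x\vee 1)+c.
\]

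To prove the claim I would distinguish cases. If $x\ge 1$ and $x+\xi\ge 1$, then $u(x+\xi)-u(x)=\int_x^{x+\xi}\ln t\,dt\le \xi\ln(x+\xi)$, where the inequality holds for both signs of $\xi$ by monotonicity of $\ln$ (bounding $\ln t$ above by $\ln(x+\xi)$ on $[x,x+\xi]$ when $\xi\ge 0$, and below by $\ln(x+\xi)$ on $[x+\xi,x]$ when $\xi<0$); then writing $\ln(x+\xi)=\ln x+\ln(1+\xi/x)$ and using $x\ge 1$ together with $x\ge |\xi|+1$ shows that $|\xi\ln(x+\xi)-\xi\ln(x\vee 1)|$ is bounded by a constant depending only on $m$. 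The remaining boundary cases are handled by direct computation: if $x=0$ then, in the situation where the bound is needed (namely $\lambda^S_{y\to y'}(x_n)>0$ with $x_{n,i}=0$, which forces $y_i=0$ and hence $\xi=y_i'\ge 0$), the difference equals $u(\xi)-1$, a bounded quantity, while $\ln(x\vee 1)=0$; and if $x+\xi=0$ then $x=|\xi|\le 2m$ and the difference equals $x-x\ln x\le x$, again bounded. Taking $c$ to be the maximum of the finitely many network constants produced this way proves the claim. I expect this step to be the main (though not deep) obstacle: the only real point is to verify that the additive error is genuinely uniform, in particular at the boundary where a pre‑ or post‑transition coordinate vanishes.

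With the claim in hand the remainder is bookkeeping. Fix $n$ and a reaction $y\to y'\in\R$ with $\lambda^S_{y\to y'}(x_n)>0$; by \eqref{eq:stochastic mass action} this forces $x_n\ge y$, so $x_{n,i}+y_i'-y_i\ge y_i'\ge 0$ for each $i$, and the claim applies with $x=x_{n,i}$, $\xi=y_i'-y_i$. Summing over $i=1,\dots,d$ and using $U=1+\sum_i u(x_i)$ gives
\[
U(x_n+y'-y)-U(x_n)=\sum_{i=1}^d\bigl(u(x_{n,i}+y_i'-y_i)-u(x_{n,i})\bigr)\le \ln\bigl((x_n\vee 1)^{y'-y}\bigr)+C ,
\]
with $C:=dc$. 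Multiplying by $\lambda^S_{y\to y'}(x_n)\ge 0$ preserves the inequality, and for reactions with $\lambda^S_{y\to y'}(x_n)=0$ the corresponding summand vanishes on both sides of the asserted bound. Summing over all reactions yields
\[
(\mathcal{L}U)(x_n)=\sum_{y\to y'\in\R}\lambda^S_{y\to y'}(x_n)\bigl(U(x_n+y'-y)-U(x_n)\bigr)\le\sum_{y\to y'\in\R}\lambda^S_{y\to y'}(x_n)\bigl(\ln((x_n\vee 1)^{y'-y})+C\bigr),
\]
which is the statement. Note that $C$ depends only on the network, hence is uniform in $n$; the hypothesis $\|x_n\|_1\to\infty$ plays no role in this particular estimate and is recorded only because that is the regime in which the lemma is used (cf.\ \cite[Lemma 4.1]{AK2017}).
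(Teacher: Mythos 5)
Your proof is correct. Note that the paper itself does not prove this lemma at all: it is quoted verbatim from \cite[Lemma 4.1]{AK2017}, so there is no in-paper argument to compare against; your coordinatewise reduction to the scalar estimate $u(x+\xi)-u(x)\le \xi\ln(x\vee 1)+c$ (via $u'(t)=\ln t$ and the integral bound, with the boundary cases $x=0$ and $x+\xi=0$ handled separately, using that $\lambda^S_{y\to y'}(x_n)>0$ forces $x_n\ge y$ so that $x_{n,i}+y_i'-y_i\ge 0$) is a sound, self-contained derivation, and your observation that the constant depends only on the network and that the hypothesis $\|x_n\|_1\to\infty$ is not actually used is accurate.
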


We will also require the following lemma in the proof of Theorem \ref{thm:positive recurrence 1}.  
\begin{lemma}\label{lem:proper x_n implies trans x_n V 1}
Let $S$ be the stoichiometric subspace of a reaction network $(\S,\C,\R)$. Let $(x_n)_{n=0}^\infty\subset \mathbb{Z}^d_{\ge 0}$ be a sequence such that $x_n-x_m \in S$ for all $n,m\in \mathbb{Z}_{\ge 0}$. If $(x_n\vee 1)_{n=0}^\infty$ is a tier sequence, then $(x_n\vee 1)_{n=0}^\infty$ is   transversal.
\end{lemma}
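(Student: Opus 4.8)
The plan is to transcribe the argument of \lemref{lem:proper_transversal}, paying attention to the one place where properness was used. Write $z_n = x_n \vee 1$, so that $(z_n)_{n=0}^\infty$ is a tier sequence by hypothesis. The elementary but decisive observation is that $z_{n,i} = \max\{x_{n,i},1\} \ge 1$ for all $i$ and $n$, so $\ln(z_{n,i}) \ge 0$ and hence no coordinate of $\ln(z_n)$ can tend to $-\infty$; moreover $z_n - x_n$ has all its entries in $\{0,1\}$, the $i$th entry being $1$ exactly when $x_{n,i}=0$.

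I would argue by contradiction. Assume $(z_n)_{n=0}^\infty$ is not transversal; then $\lim_{n\to\infty}|\ln(z_n^{y'-y})| < \infty$, i.e.\ $y \tewrt{z_n} y'$, for every reaction $y \to y' \in \R$. Exactly as in \lemref{lem:proper_transversal}, after passing to a subsequence (still a non-transversal tier sequence, and with the same tier structure) I may assume that $\ln(z_{n,i})$ is monotone in $n$ for each $i$, so that $\lim_{n\to\infty}\ln(z_{n,i})$ exists in $[0,\infty]$; since $\lim_{n\to\infty}\|\ln(z_n)\|_\infty = \infty$, there is an index $i_0$ with $\lim_{n\to\infty}\ln(z_{n,i_0}) = \infty$. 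Then \cite[Theorem 3.9]{A:boundedness} supplies a vector $w \in \RR^d$ with $\scal{w}{y'-y}=0$ whenever $y \tewrt{z_n} y'$, with $w_i > 0$ iff $\lim_{n\to\infty}\ln(z_{n,i})=\infty$, and with $w_i<0$ iff $\lim_{n\to\infty}\ln(z_{n,i})=-\infty$. By the first paragraph the last alternative never occurs, so $w_i \ge 0$ for all $i$ while $w_{i_0}>0$; and since every reaction satisfies $y \tewrt{z_n} y'$, we get $\scal{w}{y'-y}=0$ for all $y\to y'\in\R$, i.e.\ $w \in S^\perp$.

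The contradiction then comes from two incompatible estimates on $\scal{w}{z_n}$. Because $x_n - x_m \in S$ for all $n,m$ and $w \in S^\perp$, the value $\scal{w}{x_n}$ is constant in $n$ (along the subsequence); combined with $0 \le \scal{w}{z_n - x_n} = \sum_{i\,:\,x_{n,i}=0} w_i \le \sum_{i=1}^d w_i$, this shows $\scal{w}{z_n}=\scal{w}{x_n}+\scal{w}{z_n-x_n}$ is a bounded sequence. On the other hand, $w_i \ge 0$ and $z_{n,i}\ge 1$ give $\scal{w}{z_n} = \sum_{i=1}^d w_i z_{n,i} \ge w_{i_0}z_{n,i_0}$, which tends to $\infty$ because $w_{i_0}>0$ and $z_{n,i_0}\to\infty$. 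This contradiction proves the lemma. I expect the only genuine obstacle to be conceptual: recognizing that although $(x_n\vee 1)$ need not satisfy the stoichiometric constraint $(x_n\vee 1)-(x_m\vee 1)\in S$, the discrepancy $z_n-x_n$ is a $0/1$ vector that pairs boundedly with the vector $w$ of Theorem~3.9 — and that $w$ is forced to be nonnegative precisely because every coordinate of $x_n\vee 1$ stays $\ge 1$. Everything else is a routine adaptation of \lemref{lem:proper_transversal}.
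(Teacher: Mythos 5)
Your proof is correct and takes essentially the same route as the paper, whose proof of this lemma is simply the assertion that the argument of \lemref{lem:proper_transversal} goes through with $x_n$ replaced by $x_n\vee 1$. Your explicit treatment of the discrepancy $z_n-x_n$ (a $0/1$ vector paired boundedly against the nonnegative vector $w$, with $\scal{w}{x_n}$ constant since $w\in S^\perp$ and $x_n-x_m\in S$) is precisely the detail the paper leaves implicit, and the streamlined contradiction using $w_i\ge 0$ and $z_{n,i_0}\to\infty$ is valid.
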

\begin{proof}
The proof   is essentially the same as that of Lemma \ref{lem:proper_transversal}, except $x_n$ is replaced with $x_n \vee 1$.
 \end{proof}
Now we provide the proof of Theorem \ref{thm:positive recurrence 1}.

\begin{proof}[Proof (of Theorem \ref{thm:positive recurrence 1})]
Let $\mathbb{S}$ and $S$ be the state space of the associated Markov process $X$ and stochiometric subspace of $(\S,\widetilde{\C},\widetilde{\R})$, respectively. We will show by contradiction that  \eqref{lya} holds with   $U$ defined in \eqref{eq:lyapunov}.  

Thus, we suppose that  there  exists a sequence $(x_n)_{n=0}^\infty \subset\mathbb{S}$ such that
\begin{align*}
\lmt \|x_n\|_1=\infty \quad \text{and} \quad (\mathcal L U)(x_n)\ge -1 \text{ for all }  n.
\end{align*}
By Lemmma \ref{lemma:main}, there is a positive constant $C$ such that 
\begin{align}\label{eq:upper bound of AU}
(\mathcal{L} U)(x_n) \le \sum_{y\rightarrow y'\in \widetilde{\R}} \lambda^S_{y\to y'}(x_n)\left( \ln{((x_n\vee 1)^{y'-y})} +C \right).
\end{align}
We will show that there exists a subsequence $(x_{n_k})_{k=0}^\infty$ such that

\begin{align}\label{eq:goal}
\lim_{k \to \infty} \sum_{y\rightarrow y'\in \widetilde{\R}}\lambda^S_{y\to y'}(x_{n_k})\left( \ln{((x_{n_k}\vee 1)^{y'-y})} +C \right) = -\infty,
\end{align} 
in which case the proof is completed by contradiction.

By \textit{Remark} \ref{remark1}, there must exist a subsequence $(x_{n_k}\vee 1)_{k=0}^\infty$ of $(x_n\vee 1)_{n=0}^\infty$ which is a tier sequence. By Lemma \ref{lem:proper x_n implies trans x_n V 1}, $(x_{n_k}\vee 1)_{k=0}^\infty$ is a transversal tier sequence.  Since  any subsequence of a transversal tier sequence is a transversal tier sequence, we can also assume that
\begin{enumerate}
\item for each reaction $y\to y'\in \widetilde{\R}$, either $\lambda_{y \to y'}^S(x_{n_k})\neq 0$ for all $k$ or $\lambda_{y \to y'}^S(x_{n_k})= 0$ for all $k$, 
\item for each reaction $y\to y'\in \widetilde{\R}$, we have $\lim_{k\to \infty} \lambda^S_{y\to y'} \ln{((x_{n_k}\vee 1)^{y'-y})}\in [-\infty,\infty]$ and
\item there exists an index $p \in \{1,\dots,d\}$ such that $x_{n_k,p} \ge x_{n_k,i}$ for all  $k \ge 0$ and all $i \in \{1,\dots,d\}$.
\end{enumerate}
We note that since $(x_{n_k}\vee 1)_{k=0}^\infty$ is a tier sequence, $x_{n_k,p}\to \infty$, as $k \to \infty$.   We denote by $a S_p \to y_{S_p}$ a reaction from $\R_p$.  Note that, by construction, $y_{S_p} \prec_{(x_{n_k}\vee 1)} aS_p$ and that $\lambda^S_{aS_p \to y_{S_p}} (x_{n_k} ) \to \infty$, as $k \to \infty$.

We decompose $\widetilde{\R}$ into two parts, 
\begin{align*}
&\widetilde{\R}_{\precsim_{(x_{n_k}\vee 1)}}=\{y\to y' \in \widetilde{\R} : y \precsim_{(x_{n_k}\vee 1)} y'\} \quad \text{and} \\  
&\widetilde{\Re}_{\succ_{(x_{n_k}\vee 1)}}=\{y\to y' \in \widetilde{\Re} :  y \succ_{(x_{n_k}\vee 1)}y'\}.
\end{align*} 
By \eqref{eq:upper bound of AU} we have
\begin{align}
(\mathcal L U)(x_{n_k}) \le &\sum_{y\rightarrow y'\in \widetilde{\R}_{\precsim_{(x_{n_k}\vee 1)}}}\lambda^S_{y\to y'}(x_{n_k})\left( \ln{((x_{n_k}\vee 1)^{y'-y})} +C \right) \label{eq:5477765}\\
&+
 \sum_{y\rightarrow y'\in \widetilde{\Re}_{\succ_{(x_{n_k}\vee 1)}}}\lambda^S_{y\to y'}(x_{n_k})\left( \ln{((x_{n_k}\vee 1)^{y'-y})} +C \right). \label{eq:iu69786986}
 \end{align}
For $y\to y' \in \widetilde{\Re}_{\succ_{(x_{n_k}\vee 1)}}$ we have  
\[
\lim_{k\to \infty}  \lambda^S_{y\to y'}(x_{n_k})\left( \ln{((x_{n_k}\vee 1)^{y'-y})} +C \right)=-\infty,
\]
so long as $\lambda^S_{y\to y'}(x_{n_k}) \ne 0$ for each $k$.  Moreover, $aS_p \to y_{S_p} \in  \widetilde{\Re}_{\succ_{(x_{n_k}\vee 1)}}$ and $\lambda^S_{aS_p \to y_{S_p}}(x_{n_k}) \to \infty$.  Hence, the sum in \eqref{eq:iu69786986} converges to $-\infty$, as $k \to \infty$.

Turning to \eqref{eq:5477765}, we will show that for each  $y\to y' \in \widetilde{\R}_{\precsim_{(x_{n_k}\vee 1)}}$ there exists a  $\tilde{y} \to \tilde{y}' \in \widetilde{\R}_{\succ_{(x_{n_k}\vee 1)}}$ such that for any positive constant $D$,
\begin{align}\label{eq:partial goal}
\lim_{k\to\infty}\left( \lambda^S_{y\to y'}(x_{n_k})\left ( \ln{((x_{n_k}\vee 1)^{y'-y})} +C \right) + D\lambda^S_{\tilde{y}\to \tilde{y}'}(x_{n_k})\left( \ln{((x_{n_k}\vee 1)^{\tilde{y}'-\tilde{y}})} +C \right) \right)= -\infty,
\end{align} 
where $C$ is as in \eqref{eq:upper bound of AU}, which will complete the proof.

We now fix a reaction $y \to y' \in  \widetilde{\R}_{\precsim_{(x_{n_k}\vee 1)}}$.    We have three cases, depending upon the type of reaction:
\begin{itemize}
\item \textbf{Case 1:} $y \to y' \in  \R \cap \widetilde{\R}_{\precsim_{(x_{n_k}\vee 1)}}$.
\item \textbf{Case 2:} $y \to y' \in  \R' \cap \widetilde{\R}_{\precsim_{(x_{n_k}\vee 1)}}$.
\item \textbf{Case 3:} $y \to y' \in  \R'' \cap \widetilde{\R}_{\precsim_{(x_{n_k}\vee 1)}}$.
\end{itemize}

\vspace{.1in}

\noindent \textbf{Case 1.} 
We assume  $y \to y' \in  \R \cap \widetilde{\R}_{\precsim_{(x_{n_k}\vee 1)}}$.  We apply   Lemma \ref{lem:dominated_reaction} to conclude that there exists a complex $y^\star$, a reaction $y^\star \to y^{\star \star} \in \R\cap \widetilde{\R}_{\succ_{(x_{n_k}\vee 1)}}$ for which  $y\precsim_{(x_{n_k}\vee 1)} y^{\star}$ and for which
%
%
%
\begin{align}\label{eq:Tier descending implies}
&\lim_{k\to \infty}\left(c_1(x_{n_k} \vee 1)^{y}\left( \ln{(x_{n_k} \vee 1)^{y'-y})} +c_2 \right) + c_3(x_{n_k} \vee 1)^{y^{\star}}\left( \ln{(x_{n_k} \vee 1)^{y^{\star\star}-y^\star})} +c_4 \right) \right) = -\infty, 
 \end{align}
 for any choice of constants  $c_1,c_2\in \RR_{>0}$ and $c_3, c_4 \in \RR$. 
 
 Note that if $\lambda^S_{y^{\star}\to y^{\star \star}}(x_{n_k}) \ne 0$, then \eqref{hferferjoerjo}, with $V_n = 1$, and \eqref{eq:Tier descending implies}  together imply \eqref{eq:partial goal}.
 Hence, we may assume that $\lambda^S_{y^{\star}\to y^{\star \star}}(x_{n_k}) =0$ for all $k$.  We will show that $a_pS_p \to y_{S_p}$ is the desired reaction $\tilde{y}\to \tilde{y}'$ satisfying \eqref{eq:partial goal}.
 
Since $\lambda^S_{y^{\star}\to y^{\star \star}}(x_{n_k})=0$ for all $k$, we know there is some $i \in \{1,\dots,d\}$ for which $x_{n_k,i} < y_i^\star$ for all $k$.  Hence, since $m = \max\{\|y\|_1  :  y \in \C^{\mathcal{S}}\}$,  we may conclude that 
\begin{equation}\label{eq:456789787}
(x_{n_k}\vee 1)^{y^\star} \le (y_i^\star)^{y_i^\star} x_{n_k,p}^{m-y_i^{\star}} \le  (y_i^\star)^{y_i^\star} x_{n_k,p}^{m-1}
\end{equation}
 and so
 \begin{equation*}
y^{\star}\precsim_{(x_{n_k}\vee 1)} aS_p,
 \end{equation*}
 where we recall that $a\ge m-1$.   In particular, there is a $c_5 \in \RR_{>0}$ such that for all $k$,
 \begin{equation}\label{eq:33456533}
\lambda_{aS_p \to y_{S_p}}^S(x_{n_k})=\lambda_{aS_p \to y_{S_p}}^S(x_{n_k}\vee 1) \ge c_5 (x_{n_k} \vee 1)^{y^\star}.
 \end{equation}
Turning to the logarithms, we have
\begin{align}\label{eq:5678}
\ln\left((x_{n_k}\vee 1)^{y_{S_p} - aS_p}\right) &\le \ln(x_{n,p}^{a-1}) - \ln(x_{n,p}^{a}) = -\ln(x_{n,p}).
\end{align}
 Further, for  $c_6 = - y_i^\star \ln(y_i^\star)$, 
 \begin{align}\label{eq:445787}
 \ln ((x_{n_k}\vee 1)^{y^{\star \star} - y^\star}) &\ge - \ln ( (x_{n_k}\vee1)^{y^\star}) \ge - \ln(x_{n_k,p}^{m-1}) + c_6 = -(m-1)\ln(x_{n_k,p}) + c_6,
 \end{align}
where we utilized \eqref{eq:456789787} in the final inequality.  Combining \eqref{eq:5678} and \eqref{eq:445787}   shows 
 \begin{equation}\label{almostthere}
 \ln\left((x_{n_k}\vee 1)^{y_{S_p} - aS_p}\right) \le \frac1{m-1} \left( \ln((x_{n_k}\vee 1)^{y^{\star \star} - y^\star}) - c_6\right).
 \end{equation}
 Finally, combining \eqref{eq:33456533}, \eqref{almostthere}, and \eqref{eq:Tier descending implies}  gives the desired result \eqref{eq:partial goal}, completing the proof of Case 1.

\vspace{.1in}

\noindent \textbf{Case 2.} 
We assume $y \to y' \in  \R' \cap \widetilde{\R}_{\precsim_{(x_{n_k}\vee 1)}}$. Then, by the definition of $\R'$, for some $i \in \{1,\dots,d\}$ we have $y=a' S_i$ with $a' \ge m-1$ and  $y'=\sum_{j=1}^d r_j'S_j$ with  $\Vert y'\Vert_1 = \sum_{j=1}^d r_j' \le a-1$, where we utilized \eqref{987987678956785657}.
Because we are assuming that $y \to y' \in \widetilde{\R}_{\precsim_{(x_{n_k}\vee 1)}}$, we know   $a'S_i \precsim_{(x_{n_k}\vee 1)} y'$.  
Hence, there is a $c_7 \in \RR_{>0}$ such that
\begin{align}\label{eq:compare aS_i and y'}
\lambda^S_{a'S_i\to y'}(x_{n_k})\le \kappa_{a'S_i\to y'}(x_{n_k}\vee 1)^{a'S_i}\le c_7(x_{n_k}\vee 1)^{y'} \le c_7x_{n_k,p}^{a-1}, 
\end{align}
for $k$ large enough.  
Since $\lambda^S_{a S_p\to y_{S_p}}(x_{n_k})$ is a degree $a$ polynomial in $x_{n_k,p}$, there is a constant $c_8 \in \RR_{>0}$ such that
\begin{align}\label{eq:lambda_p is degree a poly}
x_{n_k,p}^{a-1}\le c_8\frac{\lambda^S_{a S_p\to y_{S_p}}(x_{n_k})}{x_{n_k,p}} \quad \text{for each $k$.}
\end{align}
We may now combine \eqref{eq:compare aS_i and y'} and \eqref{eq:lambda_p is degree a poly} to conclude that   \eqref{eq:partial goal} holds  if we take $\tilde y\to \tilde y' = aS_p \to y_{S_p}$.  Specifically, for any $D>0$,
\begin{align*}
\lim_{k\to \infty} & \lambda^S_{aS_i\to y'}(x_{n_k})\left(\ln{((x_{n_k}\vee 1)^{y'- aS_i})}+C\right) + D\lambda^S_{aS_p\to y_{S_p}}(x_{n_k})\left(\ln{((x_{n_k}\vee 1)^{y_{S_p}- aS_p})}+C\right) \\
&\le  \lim_{k\to \infty}\lambda^S_{aS_p\to y_{S_p}}(x_{n_k})\bigg( \frac{c_7 c_8}{x_{n_k,p}}\left(\ln{(x_{n_k,p}^{a-1})}+C\right)+ D\left(\ln{((x_{n_k}\vee 1)^{y_{S_p}- aS_p})}+C\right)\bigg)\\
&= -\infty
\end{align*}
where for the last equality we used the following  facts:  (i) $\lim_{k\to \infty}\ln{((x_{n_k}\vee 1)^{y_{S_p}-aS_p})}=-\infty$ and (ii)  $\lim_{k\to \infty} \frac{1}{x_{n_k,p}}\ln(x_{n_k,p}) = 0$.

\vspace{0.2in}
\noindent \textbf{Case 3.} We assume $y \to y' \in \R''\cap \widetilde{\R}_{\precsim_{(x_{n_k}\vee 1)}}$. We will again show that \eqref{eq:partial goal} holds  if we take $\tilde y\to \tilde y' = aS_p \to y_{S_p}$.

Since $y \to y' \in \R''$, there is a constant $c_9>0$ for which
\begin{align}\label{eq:lambda_y and lambda}
\frac{\lambda^S_{y\to y'}(x_{n_k})}{\lambda^S_{aS_p\to y_{S_p}}(x_{n_k})} 
\le \frac{\kappa_{y\to y'}(x_{n_k}\vee 1)^y}{\lambda^S_{aS_p\to y_{S_p}}(x_{n_k})}
\le \frac{\kappa_{y\to y'}x_{n_k,p}^{m-2}}{\lambda^S_{aS_p\to y_{S_p}}(x_{n_k})} 
\le \frac{c_9}{x_{n_k,p}} ,\quad \text{for $k$ large enough.} 
\end{align} 
Then \eqref{eq:lambda_y and lambda} implies that for any constant $D>0$
\begin{align*}
\lim_{k\to \infty}&\left( \lambda^S_{y\to y'}(x_{n_k})\left(\ln{((x_{n_k}\vee 1)^{y'- y})}+C\right) + D\lambda^S_{aS_p\to y_{S_p}}(x_{n_k})\left(\ln{((x_{n_k}\vee 1)^{y_{S_p}- aS_p})}+C\right)\right) \\
&\le \lim_{k\to \infty}\lambda^S_{aS_p\to y_{S_p}}(x_{n_k})\left( \frac{c_9}{x_{n_k,p}}\left( \ln{(x_{n_k,p}^{\|y'\|_1})}+C\right) + D\left(\ln{((x_{n_k}\vee 1)^{y_{S_p}- aS_p})}+C\right)\right) \\
&= -\infty,
\end{align*}
where the equality follows by the same argument as the end of Case 2.  Hence, the proof is complete.
\end{proof}

\begin{example}
Now we consider the strongly endotactic reaction network $(\S,\C,\R)$ introduced in \textit{Example} \ref{ex:SE_transient}. 
\begin{align}\label{SE but transient}
0 \to 2A+B\to 4A+4B \to A,
\end{align}
As we showed in \textit{Example} \ref{ex:SE_transient}, the associated Markov process for this reaction network is transient. Note that $m=\max\{\Vert y \Vert_{1} : y \in \C^S\} = 8$ for this reaction network. We let
\begin{align}\label{eq:add reaction1}
\R_A=\{7A \to 5A+B\}, \quad \R_B=\{7B \to 6B\}, \quad \R'=\R_A\cup \R_B,
\end{align}
and $\C'=\{7A,5A+B,7B,6B\}$.
Then by Theorem \ref{thm:positive recurrence 1}, the Markov process associated to $(\S,\widetilde{C},\widetilde{R})$, where $\widetilde{C}=\C\cup \C'$ and $\widetilde{R}=\R\cup \R'$, is positive recurrent for any choice of rate constants. 

Note that we could even add extra reactions, via $\R''$, that seem to push the process away from the origin, and still reach the same conclusion.  For example,  we could let 
\begin{align}\label{eq:add reaction2}
\R''=\{6A\to 10A+10B,  \ 5A+B \to 110A+20B, \ 3A+2B \to 30B\},
\end{align}
and
\[
 \C''=\{6A, \  10A+10B, \ 5A+B,\ 110A+20B,\  3A+2B, \ 30B\}.
\]
 Then by Theorem \ref{thm:positive recurrence 1}, the Markov process associated to $(\S,\widetilde{C},\widetilde{R})$, where
 \[
 \widetilde{C}=\C\cup \C'\cup \C'' \quad \text{and} \quad \widetilde{R}=\R\cup \R'\cup \R'',
 \]
  is positive recurrent for all choice of rate constants. \hfill $\square$
\end{example}

	\bibliographystyle{plain}
	\bibliography{bib}
	
\end{document}